
\documentclass{tac}

\author{Eugenia Cheng and Tom Leinster}

\thanks{TL was partially supported by an EPSRC Advanced Research Fellowship}

\address{EC: School of the Art Institute of Chicago, USA\\
TL: School of Mathematics, University of Edinburgh}

\title{Weak $\omg$-categories via terminal coalgebras}

\copyrightyear{2019}

\keywords{$\infty$-category, $\omega$-category, $n$-category, higher
  category, terminal coalgebra, final coalgebra}
\amsclass{18D05 (primary), 18C15}

\eaddress{echeng4@saic.edu, Tom.Leinster@ed.ac.uk}


\usepackage{amssymb}
\usepackage{pstricks}
\usepackage{pst-node}
\usepackage{bbm}




\psset{unit=1mm,linewidth=0.5pt,
arrowlength=1.1, arrowinset=0.7,arrowsize=4.5pt,
doublesep=1pt,
labelsep=2pt,nodesep=2pt}
  
\newcommand\1{\ensuremath{\mathbbm{1}}}%
\newcommand{\N}{\ensuremath{\mathbb N}} 
\newcommand{\omg}{\infty} 
\newcommand{\omgadj}{infinite} 
\newcommand{\cA}{{\cl{A}}}
\newcommand{\cC}{{\cl{C}}}
\newcommand{\cV}{{\cl{V}}}
\newcommand{\bI}{\bb{I}}
\newcommand{\iso}{\cong} 
\newcommand{\catequiv}{\simeq} 
\renewcommand{\:}{\colon}
\newcommand{\fn}[1]{\ensuremath{\mbox{\bfseries {\upshape {#1}\hspace{1.3pt}}}}}
\newcommand{\cat}[1]{\ensuremath{\textrm{\bfseries {\upshape {#1}}}}}
\newcommand{\Set}{{\cat{Set}}}
\newcommand{\Cat}{{\cat{Cat}}}
\newcommand{\Top}{{\cat{Top}}}
\newcommand{\Gph}{{\cat{Gph}}}
\newcommand{\wGph}{{\cat{$\omg$-Gph}}}
\newcommand{\CAT}{{\cat{CAT}}}
\newcommand{\Alg}{\fn{Alg}}
\newcommand{\cl}[1]{\ensuremath{\mathcal {#1}}}
\newcommand{\bb}[1]{\ensuremath{\mathbb {#1}}}
\newcommand{\ed}{\end{document}}
\newcommand{\id}{\ensuremath{\mbox{\em id}}}
\newcommand{\demph}[1]{{\bfseries #1}}

\newcommand{\fc}{\cat{fc}}
\newcommand{\fcv}{\ensuremath{\cat{fc}_{\cl{V}}}}
\newcommand{\vv}[1]{\vspace*{#1}}
\newcommand{\hh}[1]{\hspace*{#1}}
\newcommand{\ladj}{\ensuremath{\dashv}}

\newcommand{\tra}{\psset{unit=0.1cm,nodesep=0pt} \pspicture(8,0)
\pcline{->}(1,1.1)(7,1.1) \endpspicture}

\newcommand{\ltra}{\psset{unit=0.1cm,nodesep=0pt} \pspicture(15,0)
\pcline{->}(1.5,1.4)(13.5,1.4) \endpspicture}

\newcommand{\tmapsto}{\psset{unit=0.1cm,nodesep=0pt} \pspicture(8,0) 
\pcline{|->}(1,1.2)(7,1.2) \endpspicture}

\newcommand{\ltmapsto}{\psset{unit=0.1cm,nodesep=0pt} \pspicture(14,0)
\pcline{|->}(2.5,1.2)(11.5,1.2) \endpspicture}

\newcommand{\lthickmapsto}{\psset{unit=0.1cm,nodesep=0pt} \pspicture(14,0)
\pcline[linewidth=1.1pt,tbarsize=2pt 2]{|->}(2.5,1.2)(11.5,1.2) \endpspicture}

\newcommand{\hra}{
\psset{unit=0.1cm,labelsep=2pt,nodesep=0pt}
\pspicture(9,0)

\rput(2,2.4){\rnode{a1}{}}  
\rput(2,1.1){\rnode{a2}{}}  
\rput(8,1.1){\rnode{a3}{}}  

\ncline{->}{a2}{a3}

\nccurve[angleA=180,angleB=180,ncurv=1.8]{-}{a1}{a2}
\endpspicture}

\newcommand{\tramap}[1]{\psset{unit=0.1cm,nodesep=0pt,labelsep=2pt} \pspicture(8,4)
\pcline{->}(1,1.1)(7,1.1)\naput{\ensuremath{\scriptstyle{#1}}} \endpspicture}

\newcommand{\tmap}{\tramap}


\newcommand{\mtmap}[1]{\psset{unit=0.1cm,nodesep=0pt,labelsep=2pt} \pspicture(10,4)
\pcline{->}(1,1.1)(9,1.1)\naput{\ensuremath{\scriptstyle{#1}}} \endpspicture}


\newcommand{\ltramap}[1]{\psset{unit=0.1cm,nodesep=0pt} \pspicture(15,4)
\pcline{->}(1.5,1.1)(13.5,1.1)\naput{\ensuremath{\scriptstyle{#1}}} \endpspicture}

\newcommand{\ltmap}{\ltramap}
\newcommand{\map}{\ltmap}

\newcommand{\trta}{\psset{unit=0.1cm,nodesep=0pt} \pspicture(8,0)
\pcline[doubleline=true,arrowinset=0.6,arrowlength=0.8]{->}(1,1.1)(7,1.1) \endpspicture}

\newcommand{\Tra}{\trta}

\newcommand{\noi}{\noindent}
\newcommand{\numroman}{\renewcommand{\labelenumi}{\roman{enumi})}}
\newcommand{\numarabic}{\renewcommand{\labelenumi}{\arabic{enumi}.}}
\newcommand{\numAlph}{\renewcommand{\labelenumi}{\Alph{enumi}.}}

\newenvironment{mydefinition}{\begin{definition}} {\end{definition}}
\newenvironment{myremark}{\begin{remark}} {\end{remark}}
\newenvironment{myexample}{\begin{example}} {\end{example}}

\newenvironment{prfof}[1]{\vspace{1ex}\begin{sloppypar}{\noindent
\upshape{\bfseries Proof of {#1}. }}} {{\hspace*{\fill}
$\Box$}\end{sloppypar}\vspace{2ex}}

\newcommand{\CATp}{\ensuremath{\cat{CAT}_\mathrm{p}}}
\newcommand{\CATd}{\ensuremath{\cat{CAT}_\mathrm{d}}}
\newcommand{\CATc}{\ensuremath{\cat{CAT}_\mathrm{c}}}
\newcommand{\CATpc}{\ensuremath{\cat{CAT}_\mathrm{pc}}}
\newcommand{\MND}{\ensuremath{\cat{MND}}}
\newcommand{\MNDd}{\ensuremath{\cat{MND}_\mathrm{d}}}
\newcommand{\MNDwk}{\ensuremath{\cat{MND}^\mathrm{wk}}}
\newcommand{\MNDdwk}{\ensuremath{\cat{MND}_\mathrm{d}^\mathrm{wk}}}
\newcommand{\FG}{\ensuremath{F_\mathrm{G}}}	
\newcommand{\FC}{\ensuremath{F_\mathrm{C}}}
\newcommand{\FM}{\ensuremath{F_\mathrm{M}}}
\newcommand{\DG}{\ensuremath{D_\mathrm{G}}}
\newcommand{\DC}{\ensuremath{D_\mathrm{C}}}
\newcommand{\DM}{\ensuremath{D_\mathrm{M}}}
\newcommand{\Algd}{\ensuremath{\cat{Alg}_\mathrm{d}}}
\newcommand{\Und}{\ensuremath{\cat{Und}}}
\newcommand{\Tom}{\ensuremath{T_\omg}}
\newcommand{\Pom}{\ensuremath{P_\omg}}
\newcommand{\Piom}{\ensuremath{\Pi_\omg}}
\newcommand{\iT}[1]{T^\mathrm{i}_{#1}}
\newcommand{\iP}[1]{P^\mathrm{i}_{#1}}
\newcommand{\iPi}[1]{\Pi^\mathrm{i}_{#1}}
\newcommand{\downarr}{\!\downarrow\!}
\newcommand{\tdalgd}{\ensuremath{\Top \downarr \Algd}}
\newcommand{\tdalg}{\ensuremath{\Top \downarr \Alg}}
\newcommand{\scat}[1]{\bb{#1}}  
\renewcommand{\fc}[1]{\ensuremath{\mathbf{fc}_{#1}}} 
\renewcommand{\fcv}{\fc{\cl{V}}}
\newcommand{\tr}{\mathrm{tr}}   
\newcommand{\proofbox}{\hspace*{\fill}$\Box$}
\newcommand{\lra}{\tra}
\renewcommand{\mapsto}{\tmapsto}



\begin{document}

\maketitle

\begin{abstract}

Higher categorical structures are often defined by induction on dimension, which \emph{a priori} produces only finite-dimensional structures.  In this paper we show how to extend such definitions to infinite dimensions using the theory of terminal coalgebras, and we apply this method to Trimble's notion of weak $n$-category.  Trimble's definition makes explicit the relationship between $n$-categories and topological spaces; our extended theory produces a definition of Trimble $\omg$-category and a fundamental $\omg$-groupoid construction.

Furthermore, terminal coalgebras are often constructed as limits of a
certain type.  We prove that the theory of Batanin--Leinster weak
\mbox{$\omg$-categories} arises as just such a limit, justifying our approach to Trimble \mbox{$\omg$-categories}.   In fact we work at the level of monads for $\omg$-categories, rather than $\omg$-categories themselves; this requires more sophisticated technology but also provides a more complete theory of the structures in question.
\end{abstract}

\tableofcontents

\section*{Introduction}

\addcontentsline{toc}{section}{Introduction}

\numarabic

The main aim of this work is to give a coinductive definition of Trimble-style $\omg$-categories.  We do this using the theory of terminal coalgebras for endofunctors.  

In 1999, Trimble proposed a definition of weak $n$-category which he called
``flabby'' $n$-category~\cite{lei7}.  The definition proceeds by iterated
enrichment.  It is well known that \emph{strict} $n$-categories can be
defined by iterated enrichment, that is, for $n \geq 1$, a strict
$n$-category is precisely a category enriched in strict $(n-1)$-categories.
However, for \emph{weak} $n$-categories, a notion of weak enrichment is
required.

Trimble does not attempt a fully general definition of weak enrichment, but focusses on the motivation from topology: the idea
that every topological space $X$ should give rise to a weak
$\omg$-category, its so-called fundamental $\omg$-groupoid $\Piom X$.
This was Trimble's main goal, although he only dealt with the finite-dimensional case, and to this end, he built the fundamental $n$-groupoid construction into the definition of
higher-dimensional category.  

Given a space $X$, it is clear what the underlying globular set of its
fundamental $\omg$-groupoid $\Piom X$ should be: the $0$-cells are the
points of $X$, the $1$-cells are the paths, the $2$-cells are the homotopies
of paths, and so on.  This underlying data comes equipped with various
operations, arising, for instance, from composition of paths and homotopies; the question of defining fundamental $\omg$-groupoids comes down to how to express these operations.

In Trimble's definition, the operations available in an abstract
higher-dimensional category are dictated by the homotopical operations generically available in topological spaces as above.  Thus, the definition depends inherently on the structure of $\Top$, a suitable category of topological spaces.

The definition uses operads to parametrise the weakly associative
composition in the higher-dimensional structures.  It is well-known how
operads can be used to describe homotopy monoids, in which the
multiplication is associative and unital only up to homotopy.  Trimble uses
a generalised version of this to make enriched categories in which
composition is associative and unital only ``up to homotopy''.  The idea is
as follows.

Given a symmetric monoidal category $\cl{V}$ and an operad $P$ in $\cl{V}$,
we may consider both $\cl{V}$-categories and $P$-algebras.  These concepts
can be amalgamated to obtain a third concept: that of $(\cl{V},
P)$-category, or category ``enriched in $\cl{V}$ and weakened by $P$''.
This concept has been defined and used under several names by several
authors \cite{may1,batweb1}.  An explanation with examples can be found in
\cite{che16}; an alternative point of view is given in \cite{batweb1}, in
which it is made particularly clear the sense in which this is a form of
weak enrichment.

In order to iterate enrichment of this form, it seems that we need a
\emph{series} of operads, one for each stage of the enrichment.  While this
is true \emph{a priori}, Trimble cleverly produces these operads as part of
the enrichment. For each $n$ he defines not just a category \cat{$n$-Cat}
of $n$-categories, but a fundamental $n$-groupoid functor
\[\Pi_n \: \Top \tra \cat{$n$-Cat}.\]
He then only needs to specify one operad $E$ in $\Top$; this produces a
series of operads $\Pi_n E$ in $\cat{$n$-Cat}$ which can be used for the
iterated enrichment.

As this definition is inductive, Trimble only defined a notion of
$n$-category for finite $n$. However by taking an appropriate limit we can
define a notion of $\omg$-category for this theory.  

We emphasise that we are using the term ``$\omg$-category'' in its fully
general sense (often also called ``$\omega$-categories'').  We do not
require that any of the cells in an $\omg$-category are in any way
invertible.  This is in contrast to the restricted meaning given to the
term by some authors, who use it as a synonym for what are properly called
$(\infty, 1)$-categories ($\omg$-categories in which all cells of dimension
greater than $1$ are weakly invertible).  Thus, $2$-categories,
$3$-categories and so on are all special cases of our $\omg$-categories.

Our limit-based approach to defining Trimble-style weak $\omg$-categories is
motivated by analogy with
\begin{enumerate}
 \item strict $\omg$-categories, and
\item Batanin/Leinster weak $\omg$-categories.
\end{enumerate}
In both cases we can build $\omg$-categories as a limit of $n$-dimensional
truncations; however in the weak case some delicacy is required as the
truncation of a weak $\omg$-category is not a weak $n$-category because of
coherence issues at the top dimension.

We can take further inspiration from strict $\omg$-categories as the construction can also be made via an enrichment endofunctor
\[\cV \mapsto \cat{$\cV$-Cat}.\]
The limit in question is then the terminal coalgebra for this endofunctor, via Ad\'amek's theorem (\cite{ada1}, see \cite{ada2} for a modern survey); the limit can be thought of as an ``infinite iteration'' of the endofunctor.  The theorem tells us that under certain conditions, the terminal coalgebra can be computed as the limit of the following sequential diagram
\begin{equation}\label{diagzero}
\psset{unit=0.1cm,labelsep=2pt,nodesep=2pt}
\pspicture(80,5)


\rput(0,2){$\cdots$}

\rput(0,2){\rnode{x0}{\white{$\DG^3\1$}}}
\rput(22,2){\rnode{x1}{$F^3 1$}}
\rput(42,2){\rnode{x2}{$F^2 1$}}
\rput(62,2){\rnode{x3}{$F1$}}
\rput(80,2){\rnode{x4}{$1$}}

\psset{nodesep=2pt}
\ncline{->}{x0}{x1} \naput{{\scriptsize $F^3!$}}
\ncline{->}{x1}{x2} \naput{{\scriptsize $F^2!$}}
\ncline{->}{x2}{x3} \naput{{\scriptsize $F!$}}
\ncline{->}{x3}{x4} \naput{{\scriptsize $!$}}

\endpspicture
\end{equation}
The point is that in our examples, this diagram consists of (up to
isomorphism) the $n$-dimensional truncations of our \omgadj-dimensional
structure, so Ad\'amek's Theorem gives us the terminal coalgebra as a limit
of certain $n$-dimensional structures, which is what we would like the
\omgadj-dimensional structure to be. 

The Batanin-Leinster definition is not by iterated enrichment (at least,
not directly) but the Trimble definition is, in the appropriate weak sense.
So our aim is: 

\begin{enumerate}
 \item Define the category of Trimble $\omg$-categories as a limit of $n$-dimensional truncations.
\item Express this limit as the terminal coalgebra for a ``weak enrichment'' endofunctor.
\end{enumerate}

There are three further subtleties.  First, we prefer to work at the level of theories, so instead of simply constructing the \emph{category} of Trimble $\omg$-categories from the categories of $n$-dimensional truncations, we wish to construct the \emph{monad} for Trimble $\omg$-categories from the $n$-dimensional monads.

Secondly, Trimble's construction of $n$-categories is dependent on, and given simultaneously with, the ``fundamental $n$-groupoid'' functor
\[\Pi_n \: \Top \tra \cat{$n$-Cat}\]
at each stage of the enrichment.  This means that we need to work in a base category that expresses this data, that is, some sort of slice category.  Roughly speaking, instead of having an endofunctor for enrichment
\[\cV \tmapsto \cat{$\cV$-Cat}\]
we need one that includes $\Pi$, as in
\[\Top \tmap{\Pi} \cV \hh{1em} \tmapsto \hh{1em} \Top \tmap{\Pi^+} \cat{$\cV$-Cat}.\]
So this endofunctor seems to act on $\Top/\CAT$; for the monad version the situation needs to be adjusted accordingly.

Finally, in order for the inductive constructions to go through, we need some conditions on our categories and functors.  To construct the sort of enriched categories we require, we need to enrich in categories with finite products.  To construct the \emph{monads} that freely generate such enriched categories, we further need small coproducts and some distributivity conditions.  Thus we need to restrict the categories $\CAT$ and $\MND$ (a category of monads) variously.  This creates some technical questions when we come to take limits, but it is not too hard to check that this does not actually create any obstacles.  We include these technical results in an Appendix in order not to interrupt the flow of the paper; at a first approximation the main body of the paper can be read under the understanding that all limits required can be computed as expected.

The paper has two analogous halves, for the strict and weak cases.  As a preliminary, in Section~\ref{background} we recall the basic theory of terminal coalgebras that we will be using throughout.  We then apply this theory to analyse strict $\omg$-categories in three steps.  In Section~\ref{strictgph} we study the underlying data, $\omg$-graphs, in Section~\ref{strictcat} the categories of $n$-categories, and in Section~\ref{strictmnd} the monads for $n$-categories. This part serves partly as a warm-up for the more technically involved constructions of the second half, and partly as a basis---the second half will be not only analogous but will also build technically on these basic results.

In Section~\ref{batanin} we take a brief interlude from terminal coalgebras and study Batanin-Leinster weak $\omg$-categories by means of limits of $n$-dimensional truncations; this section thus assumes the most background.  Then we use terminal coalgebras to study Trimble $\omg$-categories analogously to Sections~\ref{strictgph}--\ref{strictmnd}: in Section~\ref{trimblegph} we study the underlying data, $\omg$-graphs, in Section~\ref{trimblecat} the categories of $n$-categories, and in Section~\ref{trimblemnd} the monads for $n$-categories.  

In each case the analysis follows broadly the same steps (though of course as things get more complicated, more definitions and intermediate lemmas are required):

\begin{enumerate}
 \item Define the category in which we work.
\item Define the endofunctor we study.
\item Define the $n$-dimensional structures we use, and truncation maps from $n$ to $n-1$ dimensions.
\item Show that these can be obtained by applying our endofunctor to the terminal object repeatedly, giving the diagram that appears in Ad\'amek's Theorem (\ref{diagzero}). 
\item Use Ad\'amek's Theorem to obtain \omgadj-dimensional structure as a limit of $n$-dimensional structures, hence as a terminal coalgebra for the endofunctor in question.
\end{enumerate}

Some of the work in this paper building up to the coalgebra results overlaps with the work of Weber \cite{web2} which was developed independently at around the same time.  Weber studies the enriched graph endofunctor and gives an inductive definition of the strict $n$-category monads, and the Trimble $n$-category monads, and he also makes widespread use of ``The formal theory of monads'' \cite{str1}.  However the emphasis of \cite{web2} and the related series of papers is different: there the aim is to give a framework for general processes of enrichment analogous to the construction and use of the Gray tensor product.  The aim of the current work is specifically to produce infinite-dimensional structures from finite-dimensional ones.  It is quite likely that Weber's work can be extended to give a concise approach to infinite-dimensional constructions in the manner of the present paper, but this is beyond our current scope.  

\subsection*{Assumed background}

\subsection*{Notation and terminology}

We will use various subscripts throughout on both our categories $\CAT$ and
$\MND$, and our endofunctors $F$ and $D$.  Lower case subscript ``p''
indicates finite products, and ``d'' indicates finite products and small
coproducts with distributivity.  On our endofunctors we will use ``G'',
``C'' and ``M'' to indicate when we are studying underyling graphs,
$n$-categories or monads respectively.  Our six main examples are summed up
in the following table; although some of the notation will not be explained
until  the main part of the paper we feel it will be useful to sum up the
structures at this point.

\[\begin{array}{c|cc}
  & \mbox{\textsf{strict}} & \mbox{\textsf{Trimble}} \\[6pt]
\hline \\[-6pt]
& \mbox{Section 2} & \mbox{Section 6} \\[4pt]
\mbox{\textsf{underlying data}} 
& 
\begin{array}{c}
  \FG  \mbox{\ on\ } \CAT\\
  \cV \mapsto\cat{$\cV$-Gph}
  \end{array}
&
\begin{array}{c}
   \DG \mbox{\ on\ } \Top/\CAT\\
       (\cV,\Pi)  \mapsto  (\cat{$\cV$-Gph}, \Pi^+)
   \end{array} \\[20pt]
\hline \\[-6pt]
%
& \mbox{Section 3} & \mbox{Section 7}\\[4pt]
%
%
\begin{array}{l}		       
\mbox{\textsf{categories of}} \\[-5pt]
\mbox{\textsf{$\omg$-categories}}
\end{array}
&
\begin{array}{c}
  \FC \mbox{\ on\ } \CATp\\
  \cV \mapsto \cat{$\cV$-Cat}
   \end{array}
&
\begin{array}{c}
   \DC \mbox{\ on\ } \Top/\CATp \\
 (\cV, \Pi)\mapsto \big(\cat{$(\cV,\Pi E)$-Cat}, \Pi^+\big)
   \end{array}\\[20pt]
\hline \\[-6pt]
%
& \mbox{Section 4} & \mbox{Section 8} \\[4pt]
%
%
\begin{array}{l}
  \mbox{\textsf{monads for}} \\[-5pt]
  \mbox{\textsf{$\omg$-categories}}
  \end{array}
&
\begin{array}{c}
   \FM \mbox{\ on\ } \MNDd \\
        (\cV,T)  \mapsto  (\cat{$\cV$-Gph},T^+)
   \end{array}
&
\begin{array}{c}
   \DM \mbox{\ on\ } (\Top,1)/\MNDd \\
        (\cV, T, \Pi)  \mapsto  \big(\cat{$\cV$-Gph}, T^+, \Pi^+ \big)
   \end{array}\\[12pt]
\end{array}\]

We will often need to refer to structures defined by equations that hold
only up to coherent isomorphism or equivalence.  In the literature, a
complex system of modifiers has evolved, all expressing this same
condition, including ``weak'', ``strong'', ``pseudo'', and no modifier at
all.  We consistently use ``weak'' throughout, which has the disadvantage
of differing from some of the literature, but the advantages of simplicity
and uniformity.

\paragraph*{Acknowledgements}  We thank Martin Hyland, Steve Lack, Carlos
Simpson and Mark Weber for useful conversations.

%
%

\section{Background on terminal coalgebras}\label{termback}\label{background}

\begin{mydefinition}

A \demph{coalgebra} for an endofunctor $F\: \cl{C} \lra \cl{C}$ consists of

\begin{itemize}
\item an object $A \in \cl{C}$ \vspace{5pt}
\item a morphism 
$\psset{unit=0.1cm,labelsep=2pt,nodesep=2pt}
\pspicture(-5,8)(5,15)
\rput(0,15){\rnode{a1}{$A$}}  
\rput(0,2){\rnode{a2}{$FA$}}  

\ncline{->}{a1}{a2} \naput{{\scriptsize $$}} 

\endpspicture$
\end{itemize}
\vv{1.5em}satisfying no axioms. Coalgebras for $F$ form a category with the obvious morphisms
\[
\psset{unit=0.1cm,labelsep=3pt,nodesep=3pt}
\pspicture(20,20)



\rput(2,18){\rnode{a1}{$A$}}  
\rput(18,18){\rnode{a2}{$B$}}  
\rput(2,2){\rnode{a3}{$FA$}}  
\rput(18,2){\rnode{a4}{$FB$}}  

\ncline{->}{a1}{a2} \naput{{\scriptsize $h$}} 
\ncline{->}{a3}{a4} \nbput{{\scriptsize $Fh$}} 
\ncline{->}{a1}{a3} \nbput{{\scriptsize $$}} 
\ncline{->}{a2}{a4} \naput{{\scriptsize $$}} 

\endpspicture
\]
so we can look for terminal coalgebras.

\end{mydefinition}

\begin{myexample}\label{exone} 

Given a set $M$ we have an endofunctor
\[\begin{array}{ccc}
\cat{Set} & \ltmap{M \times \_} & \cat{Set} \\[4pt]
A & \tmapsto & M \times A. \\[6pt]
\end{array}\]
A coalgebra for this is a function
\[\begin{array}{ccc}
A & \ltmap{(m,f)} & M \times A \\[4pt]
a & \tmapsto & (m(a), f(a)).
\end{array}\]
The terminal coalgebra is given by the set $M^{\bb{N}}$ of
``infinite words''
\[(m_0, m_1, m_2, \ldots)\]
in $M$. The structure map 
\[
\psset{unit=0.1cm,labelsep=2pt}
\pspicture(5,17)

\rput(0,15){\rnode{a1}{$M^\N$}}  
\rput(0,2){\rnode{a2}{$M \times M^\N$}}  

\ncline[nodesepA=3pt,nodesepB=1pt]{->}{a1}{a2} \naput{{\scriptsize $$}} 

\endpspicture
\]
is given by a canonical isomorphism. To show that this is terminal, given any
coalgebra 
\[
\psset{unit=0.1cm,labelsep=2pt}
\pspicture(5,17)

\rput(0,15){\rnode{a1}{$A$}}  
\rput(0,2){\rnode{a2}{$M \times A$}}  

\ncline[nodesepA=3pt,nodesepB=2pt]{->}{a1}{a2} \naput{{\scriptsize $(m,f)$}} 

\endpspicture
\]
we need a unique map
\[
\psset{unit=0.1cm,labelsep=3pt,nodesep=3pt}
\pspicture(30,25)


\rput(0,20){\rnode{a1}{$A$}}  
\rput(30,20){\rnode{a2}{$M^\N$}}  
\rput(0,0){\rnode{a3}{$M \times A$}}  
\rput(30,0){\rnode{a4}{$M \times M^\N$}}  

\ncline{->}{a1}{a2} \naput{{\scriptsize $t$}} 
\ncline{->}{a3}{a4} \nbput{{\scriptsize $1 \times t$}} 
\ncline{->}{a1}{a3} \nbput{{\scriptsize $$}} 
\ncline{->}{a2}{a4} \naput{{\scriptsize $$}} 

\endpspicture
\]
and we have
\[t\: a \ltmapsto \big(m(a), \hh{2pt} m(f(a)), \hh{2pt} m(f^2(a)), \hh{2pt} m(f^3(a)),\ldots\big).\]

\end{myexample}

\begin{myexample}\label{extwo} 
Let $F$ be the free monoid monad on \cat{Set}.  The terminal coalgebra is
given by the set $\mbox{Tr}^{\infty}$ of ``infinite trees of finite arity''.
The structure map
\[
\psset{unit=0.1cm,labelsep=2pt,nodesep=2pt}
\pspicture(30,17)

\rput(0,15){\rnode{a1}{$\mbox{Tr}^\infty$}}  
\rput(0,2){\rnode{a2}{$F(\mbox{Tr}^{\infty})$}}  

\rput[l](8,2){$=$ \mbox{``finite strings of infinite trees''}}

\ncline{->}{a1}{a2} \naput{{\scriptsize $$}} 

\endpspicture
\]
is again given by a canonical isomorphism.  For details, see, for
instance,~3.9 and~3.10 of \cite{ada2}.

\end{myexample}

\begin{lemma}[Lambek \cite{lam1}]\label{lambek}  Let $F$ be an endofunctor of a category.
If 
\[\psset{unit=0.1cm,labelsep=2pt,nodesep=2pt}
\pspicture(5,17)

\rput(0,15){\rnode{a1}{$A$}}  
\rput(0,2){\rnode{a2}{$FA$}}  

\ncline{->}{a1}{a2} \naput{{\scriptsize $f$}} 

\endpspicture\] 
is a terminal coalgebra for $F$ then $f$ is an isomorphism.

\end{lemma}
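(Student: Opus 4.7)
The idea is the standard trick: to build an inverse to $f$, we apply $F$ to $f$ to produce another coalgebra on $FA$, then use terminality to get a map back.

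Explicitly, I would first observe that the pair $(FA,\ Ff \colon FA \to FFA)$ is itself an $F$-coalgebra. Since $(A, f)$ is terminal, there exists a unique coalgebra morphism $g \colon FA \to A$, meaning that
\[ f \circ g \;=\; Fg \circ Ff. \]
The main obstacle (if there is one) is simply keeping the direction of this equation straight; everything else is formal.

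Next I would show that $g$ is a left inverse to $f$. Consider the composite $g \circ f \colon A \to A$. It is a coalgebra endomorphism of $(A, f)$, because
\[ f \circ (g \circ f) \;=\; (f \circ g) \circ f \;=\; Fg \circ Ff \circ f \;=\; F(g \circ f) \circ f. \]
But $\id_A$ is also a coalgebra endomorphism of $(A, f)$, and terminality forces uniqueness, so $g \circ f = \id_A$.

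Finally I would deduce that $g$ is also a right inverse. Applying the previous identity and the functoriality of $F$:
\[ f \circ g \;=\; Fg \circ Ff \;=\; F(g \circ f) \;=\; F(\id_A) \;=\; \id_{FA}. \]
Hence $f$ is an isomorphism with inverse $g$, completing the proof.
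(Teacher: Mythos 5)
Your argument is correct and complete: it is the standard proof of Lambek's lemma, using terminality to obtain $g\colon (FA,Ff)\to(A,f)$, uniqueness of endomorphisms of the terminal coalgebra to get $g\circ f=\id_A$, and then functoriality to get $f\circ g=\id_{FA}$. The paper gives no proof of its own (it simply cites Lambek), but your argument is exactly the one found in the cited source and in any standard reference, so there is nothing to add.
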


\begin{theorem}[Ad\'{a}mek \cite{ada1}]\label{adamek} Let $F$ be an endofunctor on a category with a terminal object $1$.  Suppose
that the limit of the following diagram exists and is preserved by $F$:
\[
\psset{unit=0.1cm,labelsep=2pt,nodesep=2pt}
\pspicture(80,5)


\rput(0,2){$\cdots$}

\rput(0,2){\rnode{x0}{\white{$\DG^3\1$}}}
\rput(22,2){\rnode{x1}{$F^3 1$}}
\rput(42,2){\rnode{x2}{$F^2 1$}}
\rput(62,2){\rnode{x3}{$F1$}}
\rput(80,2){\rnode{x4}{$1$}}

\psset{nodesep=2pt}
\ncline{->}{x0}{x1} \naput{{\scriptsize $F^3!$}}
\ncline{->}{x1}{x2} \naput{{\scriptsize $F^2!$}}
\ncline{->}{x2}{x3} \naput{{\scriptsize $F!$}}
\ncline{->}{x3}{x4} \naput{{\scriptsize $!$}}

\endpspicture
\]
Then this limit is (the underlying object of) a terminal coalgebra for $F$.
\end{theorem}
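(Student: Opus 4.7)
The plan is to exhibit the limit as a coalgebra and then verify its terminal property by an inductive reconstruction of the unique morphism. Write $L = \lim_n F^n 1$ with projections $\pi_n \: L \to F^n 1$, and let $!_n \: F^{n+1} 1 \to F^n 1$ denote the connecting maps ($!_n = F^n !$). Since $F$ preserves this limit, $FL$ together with $F\pi_n \: FL \to F^{n+1} 1$ is the limit of the \emph{shifted} diagram $\cdots \to F^3 1 \to F^2 1 \to F 1$. But the cone $(\pi_{n+1} \: L \to F^{n+1} 1)_n$ over $L$ also sits over this shifted diagram, so by universality I obtain a unique map $\alpha \: L \to FL$ characterised by $F\pi_n \circ \alpha = \pi_{n+1}$ for every $n$. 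This $\alpha$ is the candidate coalgebra structure.

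Next, given any coalgebra $\beta \: A \to FA$, I would construct the comparison map $t \: A \to L$ by specifying its components $t_n = \pi_n \circ t \: A \to F^n 1$ inductively: set $t_0 \: A \to 1$ to be the unique such map, and $t_{n+1} = F(t_n) \circ \beta$. A short induction shows $!_n \circ t_{n+1} = t_n$, using the inductive hypothesis and the observation that $!_{n+1} \circ F(t_{n+1}) = F(!_n \circ t_{n+1})$. Thus the $t_n$ form a cone over the original diagram and assemble into a unique $t \: A \to L$.

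To check $t$ is a coalgebra morphism, I compose with each projection of the limit $FL$: for every $n$,
\[
F\pi_n \circ \alpha \circ t \;=\; \pi_{n+1} \circ t \;=\; t_{n+1} \;=\; F(t_n) \circ \beta \;=\; F\pi_n \circ Ft \circ \beta,
\]
and since the $F\pi_n$ are jointly monic (they are limit projections), this gives $\alpha \circ t = Ft \circ \beta$. For uniqueness, suppose $s \: A \to L$ is any coalgebra morphism. I would show $\pi_n \circ s = t_n$ by induction on $n$: the case $n=0$ is forced by terminality of $1$, and the inductive step uses $\pi_{n+1} \circ s = F\pi_n \circ \alpha \circ s = F\pi_n \circ Fs \circ \beta = F(\pi_n \circ s) \circ \beta = F(t_n) \circ \beta = t_{n+1}$. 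Hence $s = t$.

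The only subtle point — and the step where the hypothesis on $F$ is used essentially — is the identification of $FL$ as the limit of the shifted diagram; everything else is a formal manipulation of limit cones. I therefore expect no genuine obstacle beyond being careful with indexing, and note that Lemma~\ref{lambek} automatically gives that $\alpha$ is an isomorphism once terminality has been established.
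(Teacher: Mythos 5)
Your argument is correct and complete: the construction of $\alpha$ from the shifted limit, the inductive definition of the cone $(t_n)$, the joint monicity of the projections $F\pi_n$, and the inductive uniqueness argument are exactly the standard proof of Ad\'amek's theorem. Note that the paper itself offers no proof of this statement --- it is quoted as a classical result with a citation to Ad\'amek's original article --- so there is nothing to diverge from; your write-up matches the textbook argument (as in the survey the paper also cites) and has no gaps.
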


We now revisit the previous two examples using this theorem.

\begin{myexample} (cf.\ Example~\ref{exone})  
Given a set $M$ we considered
the endofunctor 
\[\begin{array}{ccc}
\cat{Set} & \map{M \times \_} & \cat{Set} \\[4pt]
A & \mapsto & M \times A \\[6pt]
\end{array}\]
and saw that the terminal coalgebra was given by the set $M^{\bb{N}}$ of
``infinite words'' in $M$.  Using Theorem~\ref{adamek} we can construct a
terminal coalgebra as the limit of
\[
\psset{unit=0.1cm,labelsep=2pt,nodesep=2pt}
\pspicture(85,5)


\rput(0,2){$\cdots$}

\rput(0,2){\rnode{x0}{\white{$\DG^3\1$}}}
\rput(22,2){\rnode{x1}{$F^3 1$}}
\rput(46,2){\rnode{x2}{$F^2 1$}}
\rput(70,2){\rnode{x3}{$F1$}}
\rput(85,2){\rnode{x4}{$1$}}

\psset{nodesep=2pt}
\ncline{->}{x0}{x1} \naput{{\scriptsize $F^3!$}}
\ncline{->}{x1}{x2} \naput{{\scriptsize $F^2!$}}
\ncline{->}{x2}{x3} \naput{{\scriptsize $F!$}}
\ncline{->}{x3}{x4} \naput{{\scriptsize $!$}}

\endpspicture
\]
For the functor $F$ in this example, this diagram becomes
\[
\psset{unit=0.1cm,labelsep=2pt,nodesep=2pt}
\pspicture(85,5)


\rput(0,2){$\cdots$}

\rput(0,2){\rnode{x0}{\white{$\DG^3\1$}}}
\rput(22,2){\rnode{x1}{$M^3 \times 1$}}
\rput(46,2){\rnode{x2}{$M^2 \times  1$}}
\rput(70,2){\rnode{x3}{$M\times 1$}}
\rput(85,2){\rnode{x4}{$1$}}

\psset{nodesep=2pt}
\ncline{->}{x0}{x1} \naput{{\scriptsize $M^3\times !$}}
\ncline{->}{x1}{x2} \naput{{\scriptsize $M^2 \times !$}}
\ncline{->}{x2}{x3} \naput{{\scriptsize $M\times !$}}
\ncline{->}{x3}{x4} \naput{{\scriptsize $!$}}

\endpspicture
\]
which is isomorphic to
\[
\psset{unit=0.1cm,labelsep=2pt,nodesep=2pt}
\pspicture(85,5)


\rput(0,2){$\cdots$}

\rput(0,2){\rnode{x0}{\white{$\DG^3\1$}}}
\rput(22,2){\rnode{x1}{$M^3$}}
\rput(46,2){\rnode{x2}{$M^2$}}
\rput(70,2){\rnode{x3}{$M$}}
\rput(85,2){\rnode{x4}{$1$}}

\psset{nodesep=2pt}
\ncline{->}{x0}{x1} \naput{{\scriptsize $M^3$}}
\ncline{->}{x1}{x2} \naput{{\scriptsize $M^2$}}
\ncline{->}{x2}{x3} \naput{{\scriptsize $M$}}
\ncline{->}{x3}{x4} \naput{{\scriptsize $!$}}

\endpspicture
\]
and taking the limit of this diagram does indeed give infinite words in $M$.

Example~\ref{extwo} works similarly.
\end{myexample}

\begin{myexample}\label{onepointeight} 

Let \CAT\ be the category of locally small categories and functors between
them.  For any locally small category \cl{V}, let $\cl{V}\cat{-Gph}$ be the
(locally small) category of \cl{V}-graphs and their morphisms.  A \cl{V}-graph
$A$ consists of
\begin{itemize}
\item a set $\cat{ob}A$
\item for all $x,y \in \cat{ob}A$ \ an object $A(x,y) \in \cl{V}$.
\end{itemize}
A morphism $f\: A \lra B$ of \cl{V}-graphs is given by a function $f\: \cat{ob}A
\lra \cat{ob}B$ and for all pairs of objects $a, a' \in \cat{ob}A$, a morphism
$A(a,a') \lra B(fa, fa')$ in $\cl{V}$. 

There is an endofunctor
\[\begin{array}{ccccc}
(\ )\cat{-Gph} &:& \CAT & \lra & \CAT \\[4pt]
&& \cl{V} & \mapsto & \cl{V}\cat{-Gph} \\
&& H & \mapsto & H_*
\end{array}\]
where $H_*$ ``acts locally as $H$''.  That is, given a functor $H\: \cl{V} \lra
\cl{W}$ and a \cl{V}-graph $A$, the \cl{W}-graph $H_*A$ has the same objects
as $A$, and 
\[(H_*A)(a,a') = H(A(a,a')).\] 

This functor has a terminal coalgebra given by the category $\omg\cat{-Gph}$
of \emph{$\omg$-graphs}, which is equivalent to the category \cat{GSet} of
globular sets.  We will study this example in detail in the next section.

\end{myexample}

\begin{myexample}   \label{eg:Simpson} 
We learnt the following example from Carlos Simpson.  Write $\CATp$
for the category of locally small finite product categories and finite product
preserving functors.  There is an endofunctor
\[\begin{array}{ccc}
\CATp & \lra & \CATp \\[4pt]
\cl{V} & \mapsto & \cl{V}\cat{-Cat} \\
H & \mapsto & H_*.
\end{array}\]
This has a terminal coalgebra given by the category $\omg\cat{-Cat}$ of
\emph{strict $\omg$-categories.}  Section~\ref{sec:strict} consists of a
detailed study of this example, as both motivation and warm-up for the main
part of the paper.
\end{myexample}

The general idea, then, is that terminal coalgebras give us a way of
constructing infinite versions of gadgets whose finite versions we can
construct simply by induction.  Our aim is to apply this to Trimble's version
of weak $n$-categories.


\section{$\omg$-graphs}
\label{sec:graphs}\label{strictgph}

This section contains the simplest of our terminal coalgebra theorems.  It
acts as a warm-up, in that the arguments in the later sections are similar to
the arguments here but take place in more sophisticated settings.  Some later
sections will also depend on the theorem proved here.

Our purpose is to characterise $\omg\cat{-Gph}$ as a terminal coalgebra.   As $\omg$-graphs give the underlying data for all our $\omg$-categories (strict and weak), this construction will be the basis of all further constructions.  We use the following endofunctor, but introduce notation to fit with the overall system of the paper.

\begin{mydefinition} 
\label{defn:FG}\label{FG}

We write $\FG$ for the endofunctor 
\[ (\ )\cat{-Gph} \: \CAT \lra \CAT \] 
defined in Example~\ref{onepointeight}. 
\end{mydefinition}

In fact, $\FG$ is an endo-2-functor on the 2-category $\cat{CAT}$, with the
action of natural transformations defined in the obvious way.  For the most
part, though, we will continue to treat $\CAT$ as a 1-category.

A \cl{V}-graph is the underlying data for a \cl{V}-category, and an $n$-graph
is the underlying data for an $n$-category.  We define $n$-graphs by iterated
enrichment as follows.

\begin{mydefinition} 
\label{defn:nGph}\label{threepointthree}

For all $n \geq 0$ we define the category $n\cat{-Gph}$ of \demph{$n$-graphs}
as follows: 

\begin{itemize}

\item $0\cat{-Gph} = \cat{Set}$, and
\item for all $n \geq 1$, $n\cat{-Gph} = \FG\big((n-1)\cat{-Gph}\big) =
\big((n-1)\cat{-Gph}\big)\cat{-Gph}$. 

\end{itemize}

\noi We also define for all $n \geq 1$ the \demph{truncation} functor 
\[
U_n\:
n\cat{-Gph} \lra (n-1)\cat{-Gph}, 
\]
as follows. 

\begin{itemize}

\item $U_1\: 1\cat{-Gph} \lra \cat{Set}$ is the functor that sends a graph
to its set of objects, and 
\item for all $n \geq 2$, $U_n = \FG(U_{n-1}) = {U_{n-1}}_*$.

\end{itemize}

\end{mydefinition}

Alternatively, $n$-graphs can also be understood as $n$-globular sets; the categories of such are equivalent.  This is a little tangential to our main argument so we will discuss it at the end of the section.

It is straightforward to make an \omgadj-dimensional version of
$n$-globular sets directly; this is the category \cat{GSet} of globular
sets.  However for the \omgadj-dimensional version of $n\cat{-Gph}$ we
cannot proceed by induction.  Instead we define \omgadj-dimensional graphs
as follows.  It is the simplest of our various coinductive constructions.

\begin{mydefinition} \label{threepointfive}\label{omegagraphdef} 
The category $\omg\cat{-Gph}$ of \demph{$\omg$-graphs} is the limit of the
diagram
\[ 
\cdots 
\map{U_3} 2\cat{-Gph} 
\map{U_2} 1\cat{-Gph} 
\map{U_1} 0\cat{-Gph} 
\]
in $\CAT$.  We write $\tr_n\: \omg\cat{-Gph} \lra n\cat{-Gph}$ for the
$n$th projection of the limit cone, and refer to $\tr_n$ (as well as $U_n$) as
\demph{truncation}.
\end{mydefinition}

Thus the limit cone looks like
\[
\psset{unit=0.095cm,labelsep=2pt,nodesep=2pt}
\pspicture(0,-5)(114,24)

\rput(20,20){\rnode{a}{\cat{$\omg$-Gph}}}

\rput(0,0){$\cdots$}

\rput(0,0){\rnode{x3}{\white{$\DG^3\1$}}}
\rput(30,0){\rnode{x2}{$\cat{$2$-Gph}$}}
\rput(70,0){\rnode{x1}{$\cat{$1$-Gph}$}}
\rput(108,0){\rnode{x0}{$\cat{$0$-Gph}$}}

\psset{nodesep=2pt}
\ncline{->}{x3}{x2} \nbput{{\scriptsize $U_2$}}
\ncline{->}{x2}{x1} \nbput{{\scriptsize ${U_2}$}}
\ncline{->}{x1}{x0} \nbput{{\scriptsize ${U_1}$}}

\psset{labelsep=0pt}
\ncline{->}{a}{x0} \naput[npos=0.7]{{\scriptsize $\mbox{tr}_0$}}
\ncline{->}{a}{x1} \naput[npos=0.75]{{\scriptsize $\mbox{tr}_1$}}
\ncline{->}{a}{x2} \naput[npos=0.6]{{\scriptsize $\mbox{tr}_2$}}
\ncline{->}{a}{x3} \naput{{\scriptsize $$}}

\endpspicture
\]
and we may express an $\omg$-graph as an infinite sequence
\[(\ldots, X_2, X_1, X_0)\]
where each $X_n$ is an $n$-graph and is the truncation of $X_{n+1}$.

\begin{theorem} 
\label{thm:gph-tc}\label{threepointten}

The category $\omg\cat{-Gph}$ is the terminal coalgebra
for the endofunctor 
\[\FG \: \CAT \tra \CAT.\]

\end{theorem}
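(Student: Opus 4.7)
The plan is to apply Ad\'amek's Theorem (Theorem~\ref{adamek}) to $\FG$ acting on $\CAT$, taking the terminal category $1$ as the terminal object. I need to: (a) identify the Ad\'amek diagram with the truncation chain of Definition~\ref{omegagraphdef}, (b) verify that its limit is $\omega\cat{-Gph}$, and (c) check that $\FG$ preserves this limit.

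For (a), a $1$-graph is just a set (the hom-object at each pair is forced to be the unique object of $1$), so $\FG(1) \cong \cat{Set} = 0\cat{-Gph}$. Inductively, $\FG^n(1) = \FG(\FG^{n-1}(1)) \cong \FG\bigl((n-2)\cat{-Gph}\bigr) = (n-1)\cat{-Gph}$ for $n \geq 2$. A second short induction, using the recursive formula $U_n = \FG(U_{n-1})$ from Definition~\ref{defn:nGph}, shows that the connecting map $\FG^n(!)$ matches the truncation $U_n$ under these isomorphisms. For (b), adjoining a terminal object at the bottom of an $\omega^{\textrm{op}}$-indexed chain does not affect its limit, so the Ad\'amek limit coincides with $\omega\cat{-Gph}$ as given in Definition~\ref{omegagraphdef}.

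The principal step is (c), the preservation of the limit by $\FG$. I would check this by direct unpacking. The key fact is that for any functor $H\colon \cl{V} \lra \cl{W}$, the induced functor $H_* = \FG(H)$ leaves the object-set of a graph unchanged; consequently, a compatible family of $\cl{V}_n$-graphs along a chain has a common underlying object-set $S$. Given this, an object of $\FG(\lim_n \cl{V}_n)$ is a set $S$ together with, for each pair $(s,s')$, an object of $\lim_n \cl{V}_n$, that is, a compatible family of objects of the $\cl{V}_n$; re-assembling, this is exactly a compatible family of $\cl{V}_n$-graphs on the common object-set $S$, and hence an object of $\lim_n \FG(\cl{V}_n)$. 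The same analysis on morphisms shows the canonical comparison $\FG(\lim_n \cl{V}_n) \tra \lim_n \FG(\cl{V}_n)$ is an isomorphism.

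The main obstacle is (c): the indices shift and one must be careful that ``compatible families'' are interpreted consistently, but the decisive simplification is that the truncation functors (like any $H_*$) act trivially on object-sets, so the ``outer'' set $S$ factors out of the limit and the ``inner'' limit in $\cl{V}$ distributes over the hom-assignment. With preservation established, Ad\'amek's Theorem delivers $\omega\cat{-Gph}$ as the underlying category of the terminal $\FG$-coalgebra, with structure morphism the canonical map $\omega\cat{-Gph} \tra \FG(\omega\cat{-Gph})$, which by Lemma~\ref{lambek} is an isomorphism.
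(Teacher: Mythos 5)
Your proposal is correct and follows essentially the same route as the paper: apply Ad\'amek's Theorem, identify $\FG^n\1$ with $(n-1)\cat{-Gph}$ and the connecting maps with the truncations $U_n$ by induction from $\FG\1 \iso \cat{Set}$, and verify preservation of the limit by observing that the $(\ )_*$ construction fixes object-sets, so the common object-set factors out and the hom-data reassembles into an object of $(\omega\cat{-Gph})\cat{-Gph}$. The only cosmetic difference is that you make explicit the harmless index shift and the irrelevance of the terminal object at the foot of the chain, which the paper leaves implicit.
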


\begin{proof}

We use Ad\'amek's theorem.  We need to consider the limit in $\CAT$ of
\begin{equation}\label{diaga}
\psset{unit=0.1cm,labelsep=2pt,nodesep=2pt}
\pspicture(0,25)(80,35)


\rput(0,30){$\cdots$}

\rput(0,30){\rnode{x0}{\white{$\DG^3\1$}}}
\rput(30,30){\rnode{x1}{$\FG^3\1$}}
\rput(55,30){\rnode{x2}{$\FG^2\1$}}
\rput(80,30){\rnode{x3}{$\FG\1$}}

\psset{nodesep=4pt}
\ncline{->}{x0}{x1} \naput{{\scriptsize $\FG^3!$}}
\ncline{->}{x1}{x2} \naput{{\scriptsize $\FG^2!$}}
\ncline{->}{x2}{x3} \naput{{\scriptsize $\FG!$}}

\endpspicture
\end{equation}
where $\1$ is the terminal object of $\CAT$.  We need to show
\begin{enumerate}
 \item the limit is \cat{$\omg$-Gph}, and
\item the limit is preserved by $\FG$.
\end{enumerate}
First we note that the above diagram (\ref{diaga}) is isomorphic to the one whose limit defines \cat{$\omg$-Gph}; formally, there are canonical isomorphisms $(I_n)_{n \geq 0}$ such that the following
diagram commutes:
\[
\psset{unit=0.1cm,labelsep=2pt,nodesep=2pt}
\pspicture(0,5)(80,35)


\rput(0,30){$\cdots$}
\rput(0,10){$\cdots$}

\rput(0,30){\rnode{x0}{\white{$\DG^3\1$}}}
\rput(30,30){\rnode{x1}{$\FG^3\1$}}
\rput(55,30){\rnode{x2}{$\FG^2\1$}}
\rput(80,30){\rnode{x3}{$\FG\1$}}

\psset{nodesep=4pt}
\ncline{->}{x0}{x1} \naput{{\scriptsize $\FG^3!$}}
\ncline{->}{x1}{x2} \naput{{\scriptsize $\FG^2!$}}
\ncline{->}{x2}{x3} \naput{{\scriptsize $\FG!$}}

\rput(3,10){\rnode{y0}{\white{$\DG^3\1$}}}

\rput(30,10){\rnode{y1}{$\cat{2-Gph}$}}
\rput(55,10){\rnode{y2}{$\cat{1-Gph}$}}
\rput(80,10){\rnode{y3}{$\cat{0-Gph}$}}

\psset{nodesep=2pt}
\ncline{->}{y0}{y1} \nbput{{\scriptsize $U_3$}}
\ncline{->}{y1}{y2} \nbput{{\scriptsize $U_2$}}
\ncline{->}{y2}{y3} \nbput{{\scriptsize $U_1$}}

\psset{nodesepA=5pt,nodesepB=3pt,labelsep=2pt}
\ncline{->}{x1}{y1} \naput{{\scriptsize $I_2$}} \nbput{{\scriptsize $\iso$}}
\ncline{->}{x2}{y2} \naput{{\scriptsize $I_1$}} \nbput{{\scriptsize $\iso$}}
\ncline{->}{x3}{y3} \naput{{\scriptsize $I_0$}} \nbput{{\scriptsize $\iso$}}

\endpspicture
\]
This is true by a straightforward induction: we have
\[
\FG\1 = \1\cat{-Gph} \iso \cat{Set} = 0\cat{-Gph},
\]
giving the isomorphism $I_0$, and we put $I_n = \FG^n(I_0)$.  The rightmost square of the diagram commutes; hence by induction, the whole diagram commutes.

This shows that the the limit we need to take is over the following diagram
\[ 
\cdots 
\map{U_3} 2\cat{-Gph} 
\map{U_2} 1\cat{-Gph} 
\map{U_1} 0\cat{-Gph} 
\]
and this is by definition $\cat{$\omg$-Gph}$.

We now show that $\FG$ preserves this limit.  (In fact, $\FG$ preserves all
connected limits.  This can be proved by a similar argument, and also
follows from the fact that $\FG$ is a local right adjoint, as explained in
Section~3 of Weber~\cite{web2}.)  We need to show that
$\cat{($\omg$-Gph)-Gph}$ is the limit of the following diagram
\[
\psset{unit=0.095cm,labelsep=2pt,nodesep=2pt}
\pspicture(0,-5)(114,5)

\rput(0,0){$\cdots$}

\rput(0,0){\rnode{x3}{\white{$\DG^3\1$}}}
\rput(30,0){\rnode{x2}{$\cat{($2$-Gph)-Gph}$}}
\rput(70,0){\rnode{x1}{$\cat{($1$-Gph)-Gph}$}}
\rput(108,0){\rnode{x0}{$\cat{($0$-Gph)-Gph}$.}}

\psset{nodesep=2pt}
\ncline{->}{x3}{x2} \naput{{\scriptsize $$}}
\ncline{->}{x2}{x1} \naput{{\scriptsize ${U_2}_*$}}
\ncline{->}{x1}{x0} \naput{{\scriptsize ${U_1}_*$}}
\endpspicture
\]
Let $X$ be an object of the limit, so $X$ consists of an infinite sequence
\[
(\ldots, X_2, X_1, X_0)
\]
with $X_n \in \cat{($n$-Gph)-Gph}$ and 
\begin{equation}\label{equ1}
(U_{n + 1})_* X_{n + 1} = X_n
\end{equation}
for all $n \geq 0$.  On objects this gives
\[
\cat{ob} X_{n + 1}
=
\cat{ob} \big((U_{n + 1})_* X_{n + 1}\big) 
= 
\cat{ob} X_n
\]
for all $n \geq 0$, so the graphs $X_n$ all have the same object-set, $S$,
say.  Then on homs, (\ref{equ1}) gives, for all
$a, b \in S$, 
\[
U_{n + 1} \big(X_{n + 1}(a, b)\big)
=
X_n(a, b).
\]
So $X$ consists of a set $S$ together with, for each $a, b \in S$, a sequence
\[\big(\ldots, \ X_{2}(a,b), \ X_1(a,b), \ X_0(a,b)\big) \in \cat{$\omg$-Gph}.\]
In other words, $X$ is an object of \cat{($\omg$-Gph)-Gph}.  A similar argument applies to morphisms.

We have now shown that the limit of the original diagram is \cat{$\omg$-Gph} and that it is preserved by $\FG$, so by Ad\'amek's Theorem \cat{$\omg$-Gph} is the terminal coalgebra for $\FG$ as required.
\end{proof}

Note that by Lambek's Lemma~(\ref{lambek}) we then have:

\begin{corollary} 
\label{cor:gph-fixpt}

There is a canonical isomorphism $(\omg\cat{-Gph})\cat{-Gph} \iso
\omg\cat{-Gph}$.
\proofbox
\end{corollary}

Finally we discuss the relationship between $n$-graphs and $n$-globular sets; this is not directly part of our terminal coalgebra narrative, but will later help us deduce that the categories of $n$-graphs and $\omg$-graphs have the limit/colimit properties required.


Recall that an $n$-globular set is a diagram
\[
\psset{labelsep=2pt}
\pspicture(0,4)(70,12)

\rput(3,8){\rnode{b1}{$X_n$}}  
\rput(25,8){\rnode{b2}{$X_{n-1}$}}  
\rput(47,8){\rnode{b3}{\white{$X_n$}}}  
\rput(53,8){\rnode{b4}{\white{$X_n$}}}  
\rput(70,8){\rnode{b5}{$X_0$}}  

\rput(50,8){$\cdots$}

\ncline[offset=3pt]{->}{b1}{b2}\naput{{\scriptsize $s$}}
\ncline[offset=-3pt]{->}{b1}{b2}\nbput{{\scriptsize $t$}}
\ncline[offset=3pt]{->}{b2}{b3}\naput{{\scriptsize $s$}}
\ncline[offset=-3pt]{->}{b2}{b3}\nbput{{\scriptsize $t$}}
\ncline[offset=3pt]{->}{b4}{b5}\naput{{\scriptsize $s$}}
\ncline[offset=-3pt]{->}{b4}{b5}\nbput{{\scriptsize $t$}}

\endpspicture
\]
in \cat{Set} satisfying $ss = st$ and $ts = tt$ \cite{bat1}.  Write
$n\cat{-GSet}$ for the category of $n$-globular sets. Clearly \cat{$n$-GSet} is the category of presheaves on a finite category
\[
\psset{labelsep=2pt}
\pspicture(0,4)(70,12)

\rput(3,8){\rnode{b1}{$x_n$}}  
\rput(25,8){\rnode{b2}{$x_{n-1}$}}  
\rput(47,8){\rnode{b3}{\white{$X_n$}}}  
\rput(53,8){\rnode{b4}{\white{$X_n$}}}  
\rput(70,8){\rnode{b5}{$x_0$}}  

\rput(50,8){$\cdots$}

\ncline[offset=3pt]{<-}{b1}{b2}\naput{{\scriptsize $$}}
\ncline[offset=-3pt]{<-}{b1}{b2}\nbput{{\scriptsize $$}}
\ncline[offset=3pt]{<-}{b2}{b3}\naput{{\scriptsize $$}}
\ncline[offset=-3pt]{<-}{b2}{b3}\nbput{{\scriptsize $$}}
\ncline[offset=3pt]{<-}{b4}{b5}\naput{{\scriptsize $$}}
\ncline[offset=-3pt]{<-}{b4}{b5}\nbput{{\scriptsize $$}}

\endpspicture
\]
(with equations dual to those above)
and there is an evident
truncation functor
\[
n\cat{-GSet} \lra (n-1)\cat{-GSet},
\]
for each $n \geq 1$.  

\begin{proposition} 
\label{prop:ngph-ngset}

For all $n \geq 0$, there is a canonical equivalence of categories
\[
n\cat{-Gph} \catequiv n\cat{-GSet},
\]
and these equivalences of categories commute with the truncation functors.
\end{proposition}

\begin{proof}
See the proof of Proposition 1.4.9 of~\cite{lei8}.
\end{proof}

By contrast with $\omg$-graphs, it is straightforward to make an
\omgadj-dimensional version of $n\cat{-GSet}$ directly; this is the
category \cat{GSet} of globular sets.

\begin{proposition} 
\label{prop:gph-gset}

There is a canonical equivalence of categories
\[
\omg\cat{-Gph} \catequiv \cat{GSet}.
\]
Under this equivalence and those of Proposition~\ref{prop:ngph-ngset}, the
truncation functor $\tr_n$
corresponds to the evident truncation functor $\cat{GSet} \lra
n\cat{-GSet}$.   
\end{proposition}

\begin{proof}
See, for instance, Section~F.2 of~\cite{lei8}.
\end{proof}

The fact that this is an equivalence and not an isomorphism comes down to a
slightly technical but nonetheless crucial point: the comparison functor 
\[\omg\cat{-Gph} \lra \cat{GSet}\]
involves moving from a structure with locally defined sets of $k$-cells, to one where a single set of cells is given for each dimension.  This means making some arbitrary choices of
coproducts at each dimension $k$, and thus the
composite
\[\cat{GSet} \lra \omg\cat{-Gph} \lra \cat{GSet}\]
is destined to be only isomorphic, and not equal, to the identity.  This means
that \cat{GSet} is only equivalent to a limit of the sequential diagram, and
is not itself a limit.  As a consequence, \cat{GSet} is \emph{not} a terminal coalgebra for $\FG$.

One remedy would be to view $\CAT$ as a 2-category and consider weak 2-dimensional limits, but we have made the choice in this
paper to stick to 1-categorical endofunctors and their terminal coalgebras,
constructed as 1-dimensional limits.  Thus terminal
coalgebras are unique up to isomorphism, not just equivalence.   For this reason, while it is often standard to use $n$-graphs and $n$-globular sets interchangeably, we shall take care not to do so in this work.


This completes our analysis of $\omg\cat{-Gph}$; we are now ready to study
$\omg$-categories.


\section{The category of strict $\omg$-categories}
\label{sec:strictcat}\label{strictcat}\label{sec:strict}

In this section we will study the construction of strict $\omg$-categories
via limits, and hence via terminal coalgebras.  Most of the material in this
section is not new, but we give it the emphasis we need in order to motivate
and illuminate the generalisation to the weak case. 

The basic idea is that we have for each $n \in \bb{N} \cup \{\omg\}$ a
category $\cat{Str-}n\cat{-Cat}$ of strict $n$-categories and strict
$n$-functors.  Then we have the following commuting diagram of truncation
functors, building on the diagram for graphs given just after Definition~\ref{omegagraphdef}
\[
\psset{unit=0.095cm,labelsep=2pt,nodesep=2pt}
\pspicture(114,24)


\rput(20,20){\rnode{a}{\cat{Str-$\omg$-Cat}}}

\rput(0,0){$\cdots$}

\rput(0,0){\rnode{x0}{\white{$\DG^3\1$}}}
\rput(25,0){\rnode{x1}{$\cat{Str-$n$-Cat}$}}
\rput(55,0){\rnode{x2}{$\cat{Str-$(n-1)$-Cat}$}}
\rput(85,0){\rnode{x3}{$\cdots$}}
\rput(110,0){\rnode{x4}{$\cat{Str-$0$-Cat}$}}

\psset{nodesep=2pt}
\ncline{->}{x0}{x1} \naput{{\scriptsize $$}}
\ncline{->}{x1}{x2} \naput{{\scriptsize $$}}
\ncline[nodesepB=12pt]{->}{x2}{x3} \naput{{\scriptsize $$}}
\ncline[nodesepA=12pt]{->}{x3}{x4} \naput{{\scriptsize $$}}

\ncline{->}{a}{x0} \naput{{\scriptsize $$}}
\ncline{->}{a}{x1} \naput{{\scriptsize $$}}
\ncline{->}{a}{x2} \naput{{\scriptsize $$}}
\ncline{->}{a}{x4} \naput{{\scriptsize $$}}

\endpspicture
\]
Moreover, this is a limit cone in \cat{Cat}.

The precise story is a little more subtle, largely because we must take care over the difference between $n$-globular sets and $n$-graphs, as in the previous section.  Thus we have  the following
two distinct (but equivalent) ways of defining strict $\omg$-categories. 

\begin{enumerate}

\item Directly: a strict $n$-category is an $n$-globular set equipped with
composition and identities satisfying certain axioms. 

\item By induction: $\cat{Str-}0\cat{-Cat}= \cat{Set}$, and for $n \geq 1$
\[\cat{Str-}(n+1)\cat{-Cat} = (\cat{Str-}n\cat{-Cat})\cat{-Cat}.\] 

\end{enumerate}

\noindent The resulting categories of strict $n$-categories are equivalent for
all finite $n$, and we are accustomed to using these definitions somewhat interchangeably.  However for the \omgadj-dimensional case only the first
definition works \emph{a priori}; for the version by iterated enrichment, we must \emph{define}
$\cat{Str-}\omg\cat{-Cat}$ as the above limit, whereas in the first case the
fact that it is a limit is a theorem that follows from the direct (non-inductive) definition.

Standard results show that $\cat{Str-}n\cat{-Cat}$ is monadic over the
category $n\cat{-GSet}$ of $n$-dimensional globular sets or $n\cat{-Gph}$ for each $n$, and in the next section we perform the above
constructions at the level of monads rather than their algebras.

As discussed in the introduction, throughout this section we will need
finite products.  Recall from Example~\ref{eg:Simpson} that $\CATp$ denotes
the category of locally small categories with finite products, and functors
preserving them.  Observe that if $\cV$ has finite products then
$\cV\cat{-Gph}$ does too.  This extends to the following result.

\begin{lemma}
\label{lemma:FG-restricts}
$\FG$ restricts to an endofunctor of $\CATp$.
\proofbox
\end{lemma}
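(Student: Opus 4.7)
The plan is to verify the two halves of the claim separately: first that $\cV\cat{-Gph}$ has finite products whenever $\cV$ does, and second that $H_*$ preserves finite products whenever $H$ does.

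For the first half, I would construct finite products in $\cV\cat{-Gph}$ componentwise. Given $\cV$-graphs $A$ and $B$, define $A \times B$ to have object-set $\ob A \times \ob B$ (product in $\Set$) and homs
\[
(A \times B)\bigl((a,b),(a',b')\bigr) \;=\; A(a,a') \times B(b,b'),
\]
using the product in $\cV$. The projection morphisms of $\cV$-graphs are given by the projections on objects and, on each hom, by the projections in $\cV$. The terminal $\cV$-graph $\mathbf{1}$ has a one-element object-set and a single hom equal to the terminal object of $\cV$. Verifying the universal properties is routine: a morphism of $\cV$-graphs $C \to A \times B$ is determined by a pair of functions on objects together with, for each pair of objects, a morphism in $\cV$ into a product; the universal property of products in $\Set$ and in $\cV$ then give the required bijection. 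Binary products and a terminal object suffice for all finite products.

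For the second half, recall that $H_*$ acts as the identity on object-sets and locally as $H$ on homs. Thus for $\cV$-graphs $A, B$ we have
\[
H_*(A \times B)\bigl((a,b),(a',b')\bigr) \;=\; H\bigl(A(a,a') \times B(b,b')\bigr)
\]
whereas
\[
\bigl(H_*A \times H_*B\bigr)\bigl((a,b),(a',b')\bigr) \;=\; H\bigl(A(a,a')\bigr) \times H\bigl(B(b,b')\bigr).
\]
The canonical comparison map is $H$ applied to the relevant pair of projections, which is an isomorphism precisely because $H$ preserves binary products. The same argument applied to the terminal $\cV$-graph shows $H_*$ preserves the terminal object. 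Hence $H_*$ is a morphism of $\CATp$.

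I do not expect any genuine obstacle here; the verification is bookkeeping, but it is worth stating explicitly because the componentwise formulas will be reused throughout the later sections whenever we need to check that the analogous enrichment endofunctors $\FC$, $\DG$, $\DC$ restrict appropriately. The only mild subtlety is to be clear that $\cV\cat{-Gph}$ is locally small when $\cV$ is, so that the construction stays inside $\CATp$; this follows because the object-sets of $\cV$-graphs involved in a hom-set are small and $\cV(X,Y)$ is small for all $X, Y \in \cV$.
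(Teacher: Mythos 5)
Your proof is correct and matches the paper's approach: the paper states this lemma without proof (just the preceding observation that $\cV\cat{-Gph}$ inherits finite products from $\cV$), and your componentwise construction of products and terminal object in $\cV\cat{-Gph}$, together with the check that $H_*$ preserves them because it is the identity on objects and acts as $H$ on homs, is exactly the routine verification being omitted. The remark on local smallness is a sensible extra detail.
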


We now define the enriched category endofunctor that is the object of study in this section.

\begin{mydefinition} 
\label{FC}

We write $\FC$ for the endofunctor 
\[(\ )\cat{-Cat} \: \CATp \lra \CATp\] 
described in Example~\ref{eg:Simpson}. 

\end{mydefinition}

In this section, ``$n$-category'' means \emph{strict} $n$-category, and
similarly $n\cat{-Cat}$ denotes the category of \emph{strict} $n$-categories, defined iteratively as follows; this is analogous to our iterative definition of $\omg$-graphs.

\begin{mydefinition} 

For all $n \geq 0$ we define the category $n\cat{-Cat}$ of
(small, strict) \demph{$n$-categories} as follows: 

\begin{itemize}

\item $0\cat{-Cat} = \cat{Set}$, and
\item for all $n \geq 1$, $n\cat{-Cat} = \FC\big((n-1)\cat{-Cat}\big) =
\big((n-1)\cat{-Cat}\big)\cat{-Cat}$. 

\end{itemize}

\noi As before, we also define for all $n \geq 1$ the \demph{truncation} functor $U_n:
n\cat{-Cat} \lra (n-1)\cat{-Cat}$, as follows. 

\begin{itemize}

\item $U_1 \: 1\cat{-Cat} \lra \cat{Set}$ is the functor that sends a category
to its set of objects, and 
\item for all $n \geq 2$, $U_n = \FC(U_{n-1}) = ({U_{n-1}})_\ast$.

\end{itemize}

\end{mydefinition}

We now give the definition of the category of $\omg$-categories as a limit of the $n$-dimensional versions. 

\begin{mydefinition} 
\label{newfourpointfour}\label{omegacatdef}

The category $\omg\cat{-Cat}$ of (small, strict) \demph{$\omg$-categories}
is the limit of the diagram 
\[ 
\cdots 
\map{U_3} 2\cat{-Cat} 
\map{U_2} 1\cat{-Cat} 
\map{U_1} 0\cat{-Cat} 
\]
in $\CAT$.  By abuse of notation we write $\tr_n\: \omg\cat{-Cat} \lra n\cat{-Cat}$ for the
$n$th projection of this limit cone (as well as the one for $\omg$-graphs), and refer to this version of $\tr_n$ and $U_n$ as
\demph{truncation}.  
\end{mydefinition}

Thus we may express a strict $\omg$-category as an infinite sequence
\[(\ldots, X_2, X_1, X_0)\]
where each $X_n$ is in $\cat{$n$-Cat}$ and is the truncation of $X_{n+1}$.

In the next section we prove that $\omg\cat{-Cat}$ is the terminal
coalgebra for $\FC$; we will deduce it from the analogous result for monads.  Given Ad\'amek's Theorem and the definition of
$\omg\cat{-Cat}$ as a limit, the following lemma provides a strong hint towards the result.

\begin{lemma} 
\label{lemma:FC-ladder}

There are canonical isomorphisms $(I_n)_{n \geq 0}$ such that the following
diagram commutes:
\[
\psset{unit=0.1cm,labelsep=2pt,nodesep=2pt}
\pspicture(80,35)


\rput(0,30){$\cdots$}
\rput(0,10){$\cdots$}

\rput(0,30){\rnode{x0}{\white{$\DG^3\1$}}}
\rput(30,30){\rnode{x1}{$\FC^3\1$}}
\rput(55,30){\rnode{x2}{$\FC^2\1$}}
\rput(80,30){\rnode{x3}{$\FC\1$}}

\psset{nodesep=4pt}
\ncline{->}{x0}{x1} \naput{{\scriptsize $\FC^3!$}}
\ncline{->}{x1}{x2} \naput{{\scriptsize $\FC^2!$}}
\ncline{->}{x2}{x3} \naput{{\scriptsize $\FC!$}}

\rput(3,10){\rnode{y0}{\white{$\DG^3\1$}}}

\rput(30,10){\rnode{y1}{$\cat{2-Cat}$}}
\rput(55,10){\rnode{y2}{$\cat{1-Cat}$}}
\rput(80,10){\rnode{y3}{$\cat{0-Cat}$}}

\psset{nodesep=2pt}
\ncline{->}{y0}{y1} \nbput{{\scriptsize $U_3$}}
\ncline{->}{y1}{y2} \nbput{{\scriptsize $U_2$}}
\ncline{->}{y2}{y3} \nbput{{\scriptsize $U_1$}}

\psset{nodesepA=5pt,nodesepB=3pt,labelsep=2pt}
\ncline{->}{x1}{y1} \naput{{\scriptsize $I_2$}} \nbput{{\scriptsize $\iso$}}
\ncline{->}{x2}{y2} \naput{{\scriptsize $I_1$}} \nbput{{\scriptsize $\iso$}}
\ncline{->}{x3}{y3} \naput{{\scriptsize $I_0$}} \nbput{{\scriptsize $\iso$}}

\endpspicture
\]
\end{lemma}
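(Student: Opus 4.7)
The plan is to construct the $I_n$ by induction on $n$, exactly as in the analogous step inside the proof of Theorem~\ref{threepointten}: define the base iso directly, propagate it by applying $\FC$, and verify commutativity of the rightmost square by hand, with commutativity of all further squares following by functoriality.

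For the base case, I take $I_0 \: \FC\1 \lra \cat{0-Cat}$ to be the canonical isomorphism $\1\cat{-Cat} \iso \cat{Set}$. A $\1$-enriched graph is a set together with, for each pair of objects, a unique hom (the unique object of $\1$); such a thing is automatically a $\1$-enriched category, and the object-set assignment provides the inverse equivalence. Note that, in keeping with the definition of $U_1$, this $I_0$ is precisely the ``objects'' functor. For $n \geq 1$ I then set $I_n = \FC(I_{n-1})$, which is an isomorphism because $\FC$, being a functor, preserves isomorphisms.

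Next I verify the rightmost square
\[
\psset{unit=0.1cm,labelsep=2pt,nodesep=2pt}
\pspicture(40,22)
\rput(0,18){\rnode{a}{$\FC^2\1$}}
\rput(30,18){\rnode{b}{$\FC\1$}}
\rput(0,2){\rnode{c}{$\cat{1-Cat}$}}
\rput(30,2){\rnode{d}{$\cat{0-Cat}.$}}
\ncline{->}{a}{b}\naput{{\scriptsize $\FC!$}}
\ncline{->}{c}{d}\nbput{{\scriptsize $U_1$}}
\ncline{->}{a}{c}\nbput{{\scriptsize $I_1$}}
\ncline{->}{b}{d}\naput{{\scriptsize $I_0$}}
\endpspicture
\]
by direct computation. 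Given $A \in \FC^2\1 = (\1\cat{-Cat})\cat{-Cat}$, the functor $\FC!$ sends $A$ to the category with the same object-set as $A$ and all hom-objects replaced by the unique object of $\1$, so $I_0 \FC! (A) = \cat{ob} A$. On the other hand $I_1 A = (I_0)_*A$ has the same objects as $A$ and hom-set $(I_1 A)(a,b) = I_0(A(a,b)) = \cat{ob}\, A(a,b)$, so $U_1 I_1 A = \cat{ob}\, A$ as well; the same argument on morphisms shows naturality. Thus the rightmost square commutes.

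For the inductive step (for $n \geq 1$), I apply $\FC$ to the square already established between $I_n$ and $I_{n-1}$. Because $I_n = \FC(I_{n-1})$, $\FC^{n+1}! = \FC(\FC^n !)$, and $U_{n+1} = \FC(U_n)$ (by the definitions of the truncation functors for $n \geq 1$, which is exactly the inductive clause in Definition~\ref{threepointthree} applied to $\FC$ in place of $\FG$), the image under $\FC$ of the commutative square between $I_{n-1}$ and $I_n$ is precisely the square between $I_n$ and $I_{n+1}$. Induction then yields the whole ladder. There is no genuine obstacle here; the only thing that requires care, rather than pure formality, is the base square, because $U_1$ is defined by hand as the object-set functor rather than as $\FC$ applied to something, so the matching with $I_0$ must be checked explicitly as above.
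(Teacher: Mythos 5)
Your proof is correct and is essentially the paper's own argument: the paper proves this lemma by the same induction used for the $\FG$-ladder in Theorem~\ref{thm:gph-tc}, namely taking $I_0$ to be the canonical isomorphism $\FC\1 = \1\cat{-Cat} \iso \cat{Set}$, setting $I_n = \FC^n(I_0)$, checking the rightmost square by hand, and propagating commutativity by applying $\FC$. Your explicit verification of the base square (and your observation that this is the only non-formal step, since $U_1$ is defined directly rather than as $\FC$ of something) is exactly the content the paper leaves implicit.
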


\begin{proof}
As in the proof of Theorem~\ref{thm:gph-tc}.
\end{proof}

\begin{theorem} 
\label{thm:strict-cat-tc}

The category $\omg\cat{-Cat}$ of strict $\omg$-categories is the terminal
coalgebra for $\FC$.

\end{theorem}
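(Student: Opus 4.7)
The plan is to apply Ad\'amek's Theorem (Theorem~\ref{adamek}) directly to the endofunctor $\FC$ on $\CATp$, in close analogy with the proof of Theorem~\ref{thm:gph-tc}. There are two points to verify: that the Ad\'amek limit of $\FC$ coincides with $\omega\cat{-Cat}$, and that $\FC$ preserves this limit.

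For the first point, Lemma~\ref{lemma:FC-ladder} already supplies a canonical isomorphism of sequential diagrams between $\cdots \lra \FC^3 1 \lra \FC^2 1 \lra \FC 1$ and the truncation tower $\cdots \lra 2\cat{-Cat} \lra 1\cat{-Cat} \lra 0\cat{-Cat}$. Definition~\ref{newfourpointfour} identifies the limit of the latter, computed in $\CAT$, with $\omega\cat{-Cat}$, and I will take for granted, as signalled in the Introduction, that this limit is also the limit in $\CATp$.

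The substantial work is showing that $\FC$ preserves this limit. I would argue that $\FC(\omega\cat{-Cat}) = (\omega\cat{-Cat})\cat{-Cat}$ is the limit of the image diagram $\cdots \lra (2\cat{-Cat})\cat{-Cat} \lra (1\cat{-Cat})\cat{-Cat} \lra \cdots$, with cone given locally by the truncation functors $\tr_n$. As in the graph case, an object of this limit is a compatible sequence $(\ldots, X_2, X_1, X_0)$ with $X_n$ an $(n\cat{-Cat})$-category and $(U_{n+1})_* X_{n+1} = X_n$; all the $X_n$ share a common object set $S$, and for each $a,b \in S$ the sequence $(X_n(a,b))_{n \geq 0}$ assembles into an $\omega$-category, i.e.\ a hom-object of a prospective $\omega\cat{-Cat}$-enriched category. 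The new ingredient beyond the graph case is the composition and identities: for each $n$, composition is a morphism $X_n(b,c) \times X_n(a,b) \lra X_n(a,c)$ in $n\cat{-Cat}$, and these are compatible across $n$, so they glue into a single composition morphism in $\omega\cat{-Cat}$. Associativity and unitality pass levelwise, and a parallel argument handles morphisms.

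The main obstacle I anticipate lies in the interaction between the sequential limit and the finite products used to phrase enrichment: the gluing of composition morphisms relies on each truncation functor $\tr_n$ preserving finite products, and on products in $\omega\cat{-Cat}$ being computed levelwise. Both facts rest on the limit being taken inside $\CATp$ rather than merely in $\CAT$, which is precisely the technical content deferred to the Appendix. Granting this, Lemma~\ref{lemma:FC-ladder} and the preservation argument combine with Ad\'amek's Theorem to yield $\omega\cat{-Cat}$ as the terminal coalgebra of $\FC$.
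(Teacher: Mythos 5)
Your proof is essentially correct, but it takes the route that the paper explicitly mentions and then declines to follow: the paper remarks after stating Theorem~\ref{thm:strict-cat-tc} that ``this can be proved directly in a manner similar to that for $\omega$-graphs'', but instead defers the proof and deduces it from the monad-level result. In the paper's argument, the fact that the cone $(\omega\cat{-Cat} \lra n\cat{-Cat})_n$ lives in $\CATp$ and is a limit there is obtained by exhibiting it as $\Alg$ of a cone in $\MNDd$ (using that $\Alg$ restricts to $\MNDd \lra \CATd$, Proposition~\ref{twopointsix}) and then invoking reflection of limits along $\CATp \hra \CAT$; and preservation by $\FC$ is obtained with no element-level work at all, via the isomorphism $\FC \circ \Alg \iso \Alg \circ \FM$ of Lemma~\ref{lemma:comparing-Fs}, the fact that $\FM$ preserves the corresponding limit in $\MND$ (Theorem~\ref{thm:strict-monad-tc}), and the fact that $\Alg$ preserves limits because it is a right adjoint. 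Your direct approach buys self-containedness (no monads needed), at the cost of having to handle the enrichment data by hand; the paper's approach buys a very short proof, at the cost of first developing $\FM$ --- which it needs anyway for the weak case.

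The one place where your argument is genuinely thinner than an appeal to ``the Appendix'' can cover is the claim that the limit of Definition~\ref{newfourpointfour} is a limit in $\CATp$ and that $\FC$ may legitimately be applied to it. The Appendix result (Proposition~\ref{twopointeight}) only says the inclusion $\CATp \hra \CAT$ \emph{reflects} limits; to use it you must first know that the cone lies in $\CATp$, i.e.\ that $\omega\cat{-Cat}$ has finite products and each $\tr_n$ preserves them. This is exactly what your gluing of composition morphisms also needs (products in $\omega\cat{-Cat}$ computed levelwise). It can be established directly --- for instance by checking that with the standard choices each $U_n$ preserves products strictly, so that levelwise products of compatible sequences are again compatible sequences --- but it does need to be said; in the paper it falls out of $\Alg\: \MNDd \lra \CATd$. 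With that point supplied, your argument goes through.
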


This can be proved directly in a manner similar to that for $\omg$-graphs (by showing that it is a limit in \CATp\ and $\FC$ preserves the limit);
however we defer the proof as we will be able to deduce it from results on
monads in the next section.  

\begin{remark}

The category of $n$-categories obtained non-inductively (as $n$-globular sets with extra structure) is equivalent to
$n\cat{-Cat}$; see Proposition~1.4.9 of~\cite{lei8}, for instance.  Likewise the category of $\omg$-categories defined as globular sets with extra structure is equivalent to $\omg\cat{-Cat}$; see for instance Proposition~1.4.12 of~\cite{lei8}.

Note that as in the previous section, these are equivalences and not isomorphisms, so we should not expect the category of non-inductively defined $\omg$-categories to be a terminal coalgebra for $\FC$. 

\end{remark}


\section{The monad for strict $\omg$-categories}
\label{sec:strictmonad}\label{strictmnd}\label{four}

It is well known that strict $n$-categories are the algebras for a certain
monad, and in fact there is a more abstract way of performing the
constructions of the previous section, using the \emph{monad} for $n$-categories rather than the
\emph{category} of $n$-categories.  This will be useful in
Section~\ref{sec:weak}, where we study structures that are \emph{only} defined
as algebras for a monad.  

In this section we will see that there is a forgetful functor
\[\omg\cat{-Cat} \lra \omg\cat{-Gph}\]
that it is monadic, that the induced monad is the limit of the monads for $n$-categories, and hence, that it is the terminal coalgebra for a certain endofunctor. 

As discussed in the introduction will need to impose some conditions on the categories and monads involved.  To guarantee the existence of a free enriched category monad we demand finite products and small coproducts that interact well, as follows.

\begin{mydefinition}	\label{defn:inf-dist} 

An \demph{infinitely distributive category} is a category with finite products
and small coproducts in which the former distribute over the latter. We write
\CATd\ for the subcategory of \CAT\ whose objects are the locally small
infinitely distributive categories and whose morphisms are functors preserving
the finite products and small coproducts. 

\end{mydefinition}

Note that this property is inherited by enrichment: if $\cV$ is infinitely distributive then $\cV\cat{-Gph}$ and $\cV\cat{-Cat}$ are too.  This extends to the following result.

\begin{lemma}
The endofunctors $\FG$ and $\FC$ restrict to endofunctors of $\CATd$.
\proofbox
\end{lemma}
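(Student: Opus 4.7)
The plan is to verify two things for each of $\FG$ and $\FC$: first, that if $\cV$ is infinitely distributive then so is $\cV\cat{-Gph}$ (resp.\ $\cV\cat{-Cat}$); second, that if $H\: \cV \lra \cW$ preserves finite products and small coproducts, then so does $H_*$. Finite product preservation is already known from Lemma~\ref{lemma:FG-restricts} and its $\FC$ analogue, so the real content is constructing small coproducts and verifying distributivity.

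For $\FG$, I would describe coproducts in $\cV\cat{-Gph}$ explicitly: the coproduct $\coprod_i A_i$ has object set $\coprod_i \cat{ob}A_i$, with homs inherited from the corresponding $A_i$ within each component and equal to the initial object $0_\cV$ (obtained as the empty coproduct in $\cV$) between distinct components. Products are the evident pointwise ones: the object set is the product of object sets and $(A \times B)((a,b),(a',b')) = A(a,a') \times B(b,b')$ in $\cV$. Distributivity in $\cV\cat{-Gph}$ then reduces, on object sets, to distributivity in $\Set$, and on homs, to distributivity in $\cV$ together with the fact that $0_\cV \times X \iso 0_\cV$ (which itself follows from infinite distributivity, viewing $0_\cV$ as the empty coproduct).

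The case of $\FC$ uses the same underlying structures carrying canonical $\cV$-category structure. On products, identities and composition are componentwise. On coproducts, identities come from each $A_i$, and composition within a component is that of $A_i$; the subtle point is cross-component composition, but here the domain of the composition map is always a product involving some cross-component hom (hence equal to $0_\cV$ by $X \times 0_\cV \iso 0_\cV$), so the composition is forced to be the unique map out of the initial object. Associativity and unit axioms follow automatically: diagrams lying entirely within one component are inherited from the component, and those involving cross-component cells have initial domain so commute trivially. Distributivity is then inherited from $\cV\cat{-Gph}$, since the underlying graph functor creates the relevant limits and colimits.

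For functor preservation, since an $H$ in $\CATd$ preserves finite products, small coproducts, and in particular the initial object $0_\cV$ (as the empty coproduct), applying $H$ locally to the explicit formulas above shows that $H_*$ preserves products and coproducts of graphs and of categories. The only mild obstacle is the coproduct composition in the enriched-category case, but as noted this is forced by initiality; everything else is routine bookkeeping with the distributivity isomorphisms.
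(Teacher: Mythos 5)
Your proposal is correct, and it takes the only route available, which is the one the paper itself intends: the paper leaves this lemma as an unproved assertion (prefaced by the remark that infinite distributivity is ``inherited by enrichment''), and your argument simply supplies the routine verification --- explicit pointwise products, coproducts with initial cross-component homs, distributivity via $0_\cV \times X \iso 0_\cV$, and the forced composition on cross-component homs in the $\cV\cat{-Cat}$ case. The one step worth noting explicitly is that the forgetful functor $\cV\cat{-Cat} \lra \cV\cat{-Gph}$ creates small coproducts because the monad $\fcv$ preserves them (paths cannot cross components, as any such summand contains a $0_\cV$ factor), which is consistent with how the paper handles the analogous point in the proof of Lemma~\ref{lemma:DC-restricts}.
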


We must also make precise some background on monads.

\begin{mydefinition} 
\label{defn:MND}
We write \MND\ for the category of monads and lax morphisms of monads (as in
\cite{str1}, where lax morphisms of monads are called monad functors).
\end{mydefinition}

Thus, an object of \MND\ is a pair $(\cl{V}, T)$ where $\cl{V} \in \CAT$
and $T$ is a monad on $\cl{V}$,
and a morphism $(\cl{V}, T) \lra (\cl{V}', T')$ is a pair $(H, \theta)$ where
$H\: \cl{V} \lra \cl{V}'$ is a functor and $\theta$ is a natural transformation 
\[\psset{unit=0.08cm,labelsep=0pt,nodesep=3pt}
\pspicture(20,22)


\rput(0,19){\rnode{a1}{$\cV$}} 
\rput(20,19){\rnode{a2}{$\cV'$}} 

\rput(0,1){\rnode{b1}{$\cV$}}   
\rput(20,1){\rnode{b2}{$\cV'$}}  

\psset{nodesep=3pt,labelsep=2pt,arrows=->}
\ncline{a1}{a2}\naput{{\scriptsize $H$}} 
\ncline{b1}{b2}\nbput{{\scriptsize $H$}} 
\ncline{a1}{b1}\nbput{{\scriptsize $T$}} 
\ncline{a2}{b2}\naput{{\scriptsize $T'$}} 

\psset{labelsep=1.5pt}
\pnode(13,13){a3}
\pnode(7,7){b3}
\ncline[doubleline=true,arrowinset=0.6,arrowlength=0.8,arrowsize=0.5pt 2.1]{a3}{b3} \nbput[npos=0.4]{{\scriptsize $\theta$}}

\endpspicture\]
satisfying the coherence axioms in \cite{str1}.  Such a morphism is called
\demph{weak} if $\theta$ is an isomorphism, and we write $\MNDwk$ for the
subcategory of $\MND$ consisting of all objects but only the weak morphisms; we will need this restricted category for technical reasons later.

We will sometimes use an alternative point of view on monad morphisms.
By the results in~\cite{str1}, a lax morphism of monads $(\cl{V}, T) \lra
(\cl{V}', T')$ can equivalently be described as a pair of functors $H,K$ making the following square commute
\[\psset{unit=0.08cm,labelsep=0pt,nodesep=3pt}
\pspicture(20,22)


\rput(0,19){\rnode{a1}{$\cV^T$}} 
\rput(20,19){\rnode{a2}{${\cV'}^{T'}$}} 

\rput(0,1){\rnode{b1}{$\cV$}}   
\rput(20,1){\rnode{b2}{$\cV'$}}  

\psset{nodesep=3pt,labelsep=2pt,arrows=->}
\ncline{a1}{a2}\naput{{\scriptsize $K$}} 
\ncline{b1}{b2}\nbput{{\scriptsize $H$}} 
\ncline{a1}{b1}\nbput{{\scriptsize $G^T$}} 
\ncline{a2}{b2}\naput{{\scriptsize ${G'}^{T'}$}} 

\endpspicture\]
where the vertical arrows are the forgetful functors.

\begin{myremark} 

Note that the category $\MND$ could be made into a 2-category using monad
transformations, but we prefer to stay at the 1-categorical level and use the
theory of terminal coalgebras for endofunctors rather than develop the
2-dimensional version of that theory.
\end{myremark}

\begin{definition}

We write \MNDd\ for the subcategory of \MND\ given as follows:

\begin{itemize}

\item objects are pairs $(\cl{V}, T)$ where $\cl{V} \in \CATd$ and $T$ is a
monad on $\cl{V}$ whose functor part preserves small coproducts 

\item a morphism $(\cl{V},T) \lra (\cl{V}', T')$ is a lax morphism of monads
$(H, \theta)$ such that the functor $H$ preserves finite products and small coproducts.
\end{itemize}

\end{definition}

Note that for an object $(\cl{V}, T)$ of \MNDd, the functor part of $T$ is
\emph{not} required to preserve finite products.

We now define the crucial endofunctor on \MNDd; this can be thought of as the ``monad version'' of the previous endofunctor $\FC = (\ )\cat{-Cat}$, in a
sense we will make precise using the functor 
\[\Alg \: \MND \tra \CAT\]  
that sends a monad to its category of algebras. The idea is that given a monad $T$ on $\cV$, our endofunctor produces the monad on $\cV\cat{-Gph}$ for ``categories enriched in $T$-algebras'' induced by the forgetful functor
\[\cl{V}^T\cat{-Cat} \lra \cl{V}\cat{-Gph}.\]
The next two propositions construct this monad.

\begin{proposition}[Kelly {\cite[Theorem 23.4]{kel4}}] 
\label{prop:kelly}\label{fourpointeight} Let \cl{V} be an infinitely distributive category.  Then the forgetful functor
\[\cl{V}\cat{-Cat} \lra \cl{V}\cat{-Gph}\]
is monadic.  The induced monad is the ``free \cl{V}-category monad''
$\fcv$, which is the identity on underlying sets of objects, and
is given on hom-sets as follows: for a \cl{V}-graph $A$ and objects $a, a' \in
A$, 
\[(\fcv A)(a,a') = 
\coprod_{k \in \mathbb{N}, a=a_0, a_1, \ldots, a_{k-1}, a_k=a'}
A(a_{k-1},a_k) \times \cdots \times A(a_0,a_1).\] 

\end{proposition}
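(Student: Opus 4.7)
The plan is to verify monadicity by exhibiting an explicit left adjoint whose induced monad has the stated formula, and then invoke a standard monadicity theorem.

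First, I would construct the left adjoint $F\colon \cV\cat{-Gph} \to \cV\cat{-Cat}$ directly, by defining $FA$ on a $\cV$-graph $A$ to be the $\cV$-category with the same set of objects as $A$ and with hom-objects given by the stated formula
\[
(FA)(a,a') = \coprod_{k \geq 0, a=a_0, \ldots, a_k=a'} A(a_{k-1},a_k) \times \cdots \times A(a_0,a_1),
\]
where the $k=0$ summand is the terminal object of $\cV$ when $a = a'$. Composition $(FA)(b,c) \times (FA)(a,b) \to (FA)(a,c)$ is defined by concatenation of paths: using infinite distributivity in $\cV$, the product on the left distributes to give a coproduct over pairs of paths, which maps into the single-path coproduct on the right via the canonical pairing and summand inclusion. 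Identities arise from the $k=0$ summand. Associativity and unitality follow from the associativity of path concatenation together with the coherence of the distributivity and product structure.

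Next, I would verify the adjunction $F \dashv U$ by checking the universal property directly: a $\cV$-graph morphism $A \to UC$ (a family of maps on objects together with hom-maps $A(a,a') \to C(fa, fa')$) extends uniquely to a $\cV$-functor $FA \to C$, since a map out of a coproduct of products is determined by its restriction to each summand, and the length-$1$ summands pin down the behavior on the generating hom-maps while the length-$k$ summands are forced by the $\cV$-functoriality (composition) requirement and the length-$0$ summands by unitality. This is essentially the standard ``free category on a graph'' argument, enriched.

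To establish monadicity itself I would invoke the crude (or Beck) monadicity theorem. The forgetful functor $U\colon \cV\cat{-Cat} \to \cV\cat{-Gph}$ is clearly faithful and reflects isomorphisms (a $\cV$-functor whose underlying graph morphism is invertible is manifestly invertible as a $\cV$-functor). It remains to show $U$ creates coequalizers of $U$-split pairs (or, in crude form, that $\cV\cat{-Cat}$ has and $U$ preserves such coequalizers). The key observation is that for a $U$-split pair in $\cV\cat{-Cat}$, the coequalizer may be formed on the underlying $\cV$-graph, and the category structure descends along the split coequalizer because the splitting produces a retract that transports composition and identities in a compatible way; the axioms follow from diagram chases in $\cV$ using the splitting. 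Once $U$ is known monadic with left adjoint $F$, the induced monad is $UF$, whose underlying functor and unit are visibly those of the displayed formula, identifying the monad with $\fcv$.

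The main obstacle is the creation of $U$-split coequalizers: one must check with care that the $\cV$-category structure on the coequalizing $\cV$-graph is well defined and that the resulting $\cV$-functor is indeed a coequalizer in $\cV\cat{-Cat}$, as opposed to merely in $\cV\cat{-Gph}$. The distributivity and coproduct-preservation hypotheses on $\cV$ (and on the functors involved, for the enriched-functor version) are exactly what make the composition transport through the splitting, and are the reason the proposition is stated for infinitely distributive $\cV$.
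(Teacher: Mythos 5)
Your outline is correct, and every step you name is the standard one; but it is a genuinely different route from the paper's, which gives no direct argument at all and simply cites Kelly~\cite[Theorem 23.4]{kel4}. Kelly's result is a general free-monoid theorem: for a fixed object set $S$, the $\cV$-graphs on $S$ form a monoidal category under a matrix-style composition tensor, the $\cV$-categories on $S$ are exactly the monoids therein, and infinite distributivity of $\cV$ is precisely what makes that tensor preserve the colimits needed for Kelly's ``geometric series'' $\coprod_{k} X^{\otimes k}$ to give the free monoid at stage $\omega$ --- which is the displayed coproduct-of-products formula. Your hands-on construction buys a self-contained proof in which the role of distributivity (composition of concatenated paths distributing over the indexing coproducts) is completely explicit; the citation buys the associativity, unit and adjunction verifications, and the monadicity, as instances of a theorem proved once, in a form that transfers smoothly to the weakly enriched generalisation needed later (Proposition~\ref{prop:VP-kelly}). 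One point worth making explicit in your version of the Beck step: for a $U$-split pair the required coequalizers of object-sets and of hom-objects exist automatically because split coequalizers are absolute, so no cocompleteness of $\cV$ beyond the stated hypotheses is being smuggled in; with that said, the transport of composition and identities along the splitting is the routine diagram chase you describe, and your argument goes through.
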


Note that in the case $\cV = \Set$ the monad $\fcv$ is the free category monad on $\Gph$, which we write as $\cat{fc}$.

Now consider a monad $T$ on an infinitely distributive category \cl{V}.
Since $\FG$ extends to a 2-functor on $\CAT$ (as remarked after
Definition~\ref{defn:FG}), there is an induced monad $\FG(T) = T_*$ on
$\FG(\cl{V}) = \cl{V}\cat{-Gph}$.  Hence $\cl{V}\cat{-Gph}$ carries two
monads, $\fcv$ and $T_*$.

\numroman
\begin{proposition} 
\label{prop:fc-T}\label{fourpointnine}

Let $(\cl{V}, T) \in \MNDd$.  Then:
\begin{enumerate}
\item The diagonal of the commutative square of forgetful functors
\[
\psset{unit=0.1cm,labelsep=3pt,nodesep=3pt}
\pspicture(25,23)



\rput(0,20){\rnode{a1}{\cat{$\cV^T$-Cat}}}  
\rput(25,20){\rnode{a2}{\cat{$\cV^T$-Gph}}}  
\rput(0,0){\rnode{a3}{\cat{$\cV$-Cat}}}  
\rput(25,0){\rnode{a4}{\cat{$\cV$-Gph}}}  

\ncline{->}{a1}{a2} \naput{{\scriptsize $$}} 
\ncline{->}{a3}{a4} \nbput{{\scriptsize $$}} 
\ncline{->}{a1}{a3} \nbput{{\scriptsize $$}} 
\ncline{->}{a2}{a4} \naput{{\scriptsize $$}} 

\endpspicture
\]

is monadic.

\item There is a canonical distributive law of $T_*$ over $\fcv$, and the
resulting monad $\fcv \circ T_*$ on $\cl{V}\cat{-Gph}$ is the monad induced by
this diagonal functor.

\item $(\cl{V}\cat{-Gph}, \fcv \circ T_*) \in \MNDd$.

\end{enumerate}

\end{proposition}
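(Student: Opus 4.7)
The plan is to construct a canonical distributive law of $T_*$ over $\fcv$; this will establish parts (i) and (ii) simultaneously via the formal theory of distributive laws, after which part (iii) reduces to checking preservation properties.

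First I would verify that $\cl{V}^T$ lies in $\CATd$, so that Proposition~\ref{prop:kelly} applies to produce the free $\cl{V}^T$-category monad $f_{\cl{V}^T}$ on $\cl{V}^T\cat{-Gph}$. Finite products in $\cl{V}^T$ are created by the forgetful functor to $\cl{V}$; small coproducts exist and are created by the same functor because $T$ preserves them by hypothesis; and distributivity in $\cl{V}^T$ follows from distributivity in $\cl{V}$ together with the fact that both finite products and small coproducts of $T$-algebras are computed on underlying objects.

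Next, I would use the canonical isomorphism $(\cl{V}\cat{-Gph})^{T_*} \iso \cl{V}^T\cat{-Gph}$: a $T_*$-algebra structure on a $\cl{V}$-graph $A$ is exactly a compatible family of $T$-algebra structures on the hom-objects $A(x,y)$. Comparing the coproduct-of-products formula of Proposition~\ref{prop:kelly} applied in $\cl{V}^T$ with the same formula applied in $\cl{V}$, the monad $f_{\cl{V}^T}$ lifts $\fcv$ along the monadic forgetful functor $\cl{V}^T\cat{-Gph} \to \cl{V}\cat{-Gph}$, precisely because the relevant products and coproducts of $T$-algebras are computed on underlying $\cl{V}$-objects. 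By the standard correspondence between liftings of monads to Eilenberg--Moore categories and distributive laws, this lift yields a canonical distributive law $\lambda \colon T_*\,\fcv \to \fcv\, T_*$. The formal theory of distributive laws then endows the composite $\fcv\, T_*$ with a monad structure whose category of algebras is $((\cl{V}\cat{-Gph})^{T_*})^{f_{\cl{V}^T}} \iso \cl{V}^T\cat{-Cat}$, and the associated forgetful functor to $\cl{V}\cat{-Gph}$ coincides with the diagonal of the given square. This proves (i) and (ii) together.

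For (iii), $\cl{V}\cat{-Gph}$ inherits finite products, small coproducts, and their distributivity from $\cl{V}$, all three computed fibrewise over disjoint unions of object-sets. For preservation of small coproducts by $\fcv\, T_*$: the functor $T_*$ preserves them because $T$ does and coproducts in $\cl{V}\cat{-Gph}$ are computed pointwise, and $\fcv$ preserves them because every path in a coproduct $\coprod_i A_i$ of $\cl{V}$-graphs lies entirely within a single summand, so the outer path-summation commutes with the coproduct by distributivity in $\cl{V}$. The main obstacle is confirming that $f_{\cl{V}^T}$ genuinely lifts $\fcv$ in the sense required by the Beck correspondence, matching not only the underlying functors but also the units and multiplications; this reduces to observing that the coproduct-of-products formula is computed identically in $\cl{V}^T$ and in $\cl{V}$, which in turn follows from the creation properties of the forgetful functor $\cl{V}^T \to \cl{V}$ established in the first step.
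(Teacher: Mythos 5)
Your proof is correct and follows essentially the route that the paper itself outsources to Proposition~F.1.1 of~\cite{lei8}: establish that $\cl{V}^T \in \CATd$ (so Kelly's theorem applies in $\cl{V}^T$), identify $(\cl{V}\cat{-Gph})^{T_*}$ with $\cl{V}^T\cat{-Gph}$, exhibit $\fc{\cl{V}^T}$ as a lift of $\fcv$ along the forgetful functor, and invoke the Beck correspondence between such lifts and distributive laws to get both the law and the monadicity of the diagonal at once. The one point that genuinely needs care --- that the lift be strict (on units and multiplications too) rather than merely up to isomorphism --- you address correctly via the creation properties of $G^T\: \cl{V}^T \lra \cl{V}$, and your verification of part (iii), which the paper dismisses as ``straightforward to check,'' is the right argument (coproducts of $\cl{V}$-graphs are summand-wise, paths cannot cross summands because the cross-homs are initial, and distributivity kills products with the initial object).
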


\begin{proof}
For parts (i) and (ii) see Proposition~F.1.1 of~\cite{lei8}.  There it is assumed that \cl{V} is a
presheaf category, but the proof needs only that \cl{V} is infinitely
distributive.  The last part is straightforward to check.
\end{proof}

This gives the action of our endofunctor on objects; the full definition is as follows.

\begin{mydefinition} 
\label{FM}\label{fourpointten}

We define an endofunctor \[\FM\: \MNDd\ \lra \MNDd\] as follows.

\begin{itemize}

\item On objects, $\FM(\cl{V},T) = \big(\cl{V}\cat{-Gph}, \ \cat{fc}_{\cl{V}}
\circ T_*\big)$.

\item On morphisms, a lax morphism of monads
\[\psset{unit=0.08cm,labelsep=0pt,nodesep=3pt}
\pspicture(20,22)


\rput(0,20){\rnode{a1}{$\cV$}} 
\rput(20,20){\rnode{a2}{$\cV'$}} 

\rput(0,0){\rnode{b1}{$\cV$}}   
\rput(20,0){\rnode{b2}{$\cV'$}}  

\psset{nodesep=3pt,labelsep=2pt,arrows=->}
\ncline{a1}{a2}\naput{{\scriptsize $H$}} 
\ncline{b1}{b2}\nbput{{\scriptsize $H$}} 
\ncline{a1}{b1}\nbput{{\scriptsize $T$}} 
\ncline{a2}{b2}\naput{{\scriptsize $T'$}} 

\psset{labelsep=1.5pt}
\pnode(13,13){a3}
\pnode(7,7){b3}
\ncline[doubleline=true,arrowinset=0.6,arrowlength=0.8,arrowsize=0.5pt 2.1]{a3}{b3} \nbput[npos=0.4]{{\scriptsize $\theta$}}

\endpspicture\]
is mapped to the composite
\[\psset{unit=0.1cm,labelsep=0pt,nodesep=3pt}
\pspicture(0,-4)(23,44)


\rput(-1,40){\rnode{a1}{\cat{$\cV$-Gph}}} 
\rput(21,40){\rnode{a2}{\cat{$\cV'$-Gph}}} 

\rput(-1,20){\rnode{b1}{\cat{$\cV$-Gph}}}   
\rput(21,20){\rnode{b2}{\cat{$\cV'$-Gph}}}  

\rput(-1,0){\rnode{c1}{\cat{$\cV$-Gph}}}   
\rput(21,0){\rnode{c2}{\cat{$\cV'$-Gph}}}  

\psset{nodesep=3pt,labelsep=2pt,arrows=->}
\ncline{a1}{a2}\naput{{\scriptsize $H_*$}} 
\ncline{b1}{b2}\nbput{{\scriptsize $H_*$}} 
\ncline{c1}{c2}\nbput{{\scriptsize $H_*$}} 

\ncline{a1}{b1}\nbput{{\scriptsize $T_*$}} 
\ncline{b1}{c1}\nbput{{\scriptsize $\cat{fc}_{\cV}$}} 

\ncline{a2}{b2}\naput{{\scriptsize $T'_*$}} 
\ncline{b2}{c2}\naput{{\scriptsize $\cat{fc}_{\cV'}$}} 

\psset{labelsep=1.5pt,nodesep=4pt}

\pnode(13,33){a3}
\pnode(7,27){b3}
\ncline[doubleline=true,arrowinset=0.6,arrowlength=0.8,arrowsize=0.5pt 2.1]{a3}{b3} \naput[npos=0.4]{{\scriptsize $\theta_*$}}

\pnode(13,13){a3}
\pnode(7,7){b3}
\ncline[doubleline=true,arrowinset=0.6,arrowlength=0.8,arrowsize=0.5pt 2.1]{a3}{b3} \naput[npos=0.4]{{\scriptsize $\iso$}}

\endpspicture\]
where the bottom square is the isomorphism induced by $H$ preserving
products and coproducts. 

\end{itemize}

\end{mydefinition}

As noted after Definition~\ref{defn:MND}, a lax morphism of monads $(\cl{V},
T) \lra (\cl{V}', T')$ may be viewed as a commutative square
\[
\psset{unit=0.1cm,labelsep=3pt,nodesep=3pt}
\pspicture(20,25)



\rput(0,20){\rnode{a1}{$\cV^T$}}  
\rput(20,20){\rnode{a2}{${\cV'}^{T'}$}}  
\rput(0,0){\rnode{a3}{$\cV$}}  
\rput(20,0){\rnode{a4}{$\cV'$}}  

\ncline{->}{a1}{a2} \naput{{\scriptsize $K$}} 
\ncline{->}{a3}{a4} \nbput{{\scriptsize $H$}} 
\ncline{->}{a1}{a3} \nbput{{\scriptsize $G^T$}} 
\ncline{->}{a2}{a4} \naput{{\scriptsize ${G'}^{T'}$}} 

\endpspicture
\]
When monad morphisms are viewed in this way, the image of such a morphism
under $\FM$ is the commutative square
\[
\psset{unit=0.1cm,labelsep=3pt,nodesep=3pt}
\pspicture(0,-4)(25,24)



\rput(0,20){\rnode{a1}{\cat{$\cV^T$-Cat}}}  
\rput(25,20){\rnode{a2}{\cat{${\cV'}^{T'}$-Cat}}}  
\rput(0,0){\rnode{a3}{\cat{$\cV$-Gph}}}  
\rput(25,0){\rnode{a4}{\cat{$\cV'$-Gph}}}  

\ncline{->}{a1}{a2} \naput{{\scriptsize $K_*$}} 
\ncline{->}{a3}{a4} \nbput{{\scriptsize $H_*$}} 
\ncline{->}{a1}{a3} \nbput{{\scriptsize $$}} 
\ncline{->}{a2}{a4} \naput{{\scriptsize $$}} 

\endpspicture
\]
where the vertical arrows are the forgetful functors; sometimes this point of view will make our constructions easier.

At the end of this section the results we prove about $\FM$ will be transferred to results about $\FC$ via the functor $\Alg$ as below.

\begin{proposition}[Street {\cite[Theorems~1 and~7]{str1}}] \label{prop:FTM-adjns} 
\label{twopointthree}

There is a chain of adjunctions
\[
\psset{unit=0.11cm,labelsep=2pt,nodesep=3pt}
\pspicture(0,20)


\rput(0,0){\rnode{a1}{$\CAT$}}  
\rput(0,20){\rnode{a2}{$\MND$}}  

\ncline[nodesepA=3pt,nodesepB=3pt]{->}{a1}{a2}\ncput*[labelsep=0pt]{{\scriptsize $\cat{Inc}$}} 

\nccurve[angleA=215,angleB=145,ncurv=0.9]{->}{a2}{a1}\nbput{{\scriptsize $\Und$}} \naput[labelsep=5.5pt]{{\scriptsize $\ladj$}}
\nccurve[angleA=-35,angleB=35,ncurv=0.9]{->}{a2}{a1}\naput{{\scriptsize $\Alg$}} \nbput[labelsep=5.5pt]{{\scriptsize $\ladj$}}

\endpspicture
\]
\noi where

\[\begin{array}[t]{ccccc}
\Und &:& \MND &\lra& \CAT\\[0pt]
&& (\cl{V}, T) & \mapsto & \cl{V} \\[10pt]
\cat{Inc} &:& \CAT & \lra & \MND \\[0pt]
&& \cl{V} & \mapsto & (\cl{V}, 1) \\[10pt]
\Alg &:& \MND & \lra & \CAT \\[0pt]
&& (\cl{V},T) & \mapsto & \ \cl{V}^T. 
\end{array}\]

\end{proposition}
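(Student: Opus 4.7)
The plan is to prove each of the two adjunctions $\Und \dashv \cat{Inc}$ and $\cat{Inc} \dashv \Alg$ by directly computing the hom-sets, then checking naturality; the result is then just an appeal to Street \cite{str1}, but a sketch helps motivate it. Throughout, a lax morphism $(\cV, T) \lra (\cW, S)$ in $\MND$ is a pair $(H, \theta)$ with $H \: \cV \lra \cW$ a functor and $\theta \: S H \Ra H T$ a natural transformation, subject to the unit and multiplication coherence axioms.

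For the first adjunction $\Und \dashv \cat{Inc}$, I would unpack a lax morphism $(\cV, T) \lra (\cW, 1)$: this is a pair $(H, \theta)$ with $\theta \: H \Ra H T$. The unit coherence axiom, once $S = 1$ is substituted, reads $\theta = H \eta^T$ (since the unit of the identity monad is the identity), so $\theta$ is forced by $H$. The multiplication coherence then collapses to a triviality, using the monad identity $\mu^T \circ T \eta^T = 1$. Hence the assignment $(H, \theta) \mapsto H$ is a bijection
\[
\MND\big((\cV, T), (\cW, 1)\big) \iso \CAT(\cV, \cW),
\]
and naturality in both variables is a routine check.

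For the second adjunction $\cat{Inc} \dashv \Alg$, I would unpack a lax morphism $(\cV, 1) \lra (\cW, S)$: this is a pair $(H, \theta)$ with $\theta \: S H \Ra H$. For each $v \in \cV$, the component $\theta_v \: S H v \lra H v$ is a candidate $S$-algebra structure on $H v$; the unit coherence axiom says $\theta_v \circ \eta^S_{H v} = 1_{H v}$, and the multiplication coherence says $\theta_v \circ S \theta_v = \theta_v \circ \mu^S_{H v}$. These are precisely the $S$-algebra axioms. Naturality of $\theta$ in $v$ says that for every $f \: v \lra v'$ in $\cV$, the morphism $H f$ commutes with the structure maps, i.e.\ is an $S$-algebra morphism. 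Thus $(H, \theta)$ is the same data as a functor $\widetilde H \: \cV \lra \cW^S$ that lifts $H$ through the forgetful functor, yielding the required natural bijection.

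The only potentially subtle step is bookkeeping: one must keep the direction of $\theta$ consistent with the convention of \cite{str1} and verify that each displayed axiom really reduces as claimed when one side of the monad pair is trivial. Both reductions are essentially forced, and naturality of the two bijections in all variables is immediate from the definitions. This gives the three-functor chain $\Und \dashv \cat{Inc} \dashv \Alg$ of the statement.
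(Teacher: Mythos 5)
Your verification is correct, and it is exactly the argument that the paper's citation to Street points to: the paper itself gives no proof, and both adjunctions follow from the hom-set computations you describe (a lax morphism into $(\cW,1)$ has its 2-cell forced to be $H\eta^T$ by the unit axiom, with the multiplication axiom then automatic; a lax morphism out of $(\cV,1)$ is precisely a functor into the category of algebras). The only quibble is bookkeeping, as you anticipated: in the first adjunction the multiplication axiom collapses via the unit law $\mu^T \circ \eta^T T = 1$ rather than $\mu^T \circ T\eta^T = 1$ (with the paper's whiskering conventions), but since both are monad laws this is immaterial.
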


The proof of the following proposition can be found in the appendix.

\begin{proposition} 
\label{prop:FTM-dist}\label{twopointsix}

The chain of adjunctions in Proposition~\ref{prop:FTM-adjns}
restricts to a chain of adjunctions
\[
\psset{unit=0.11cm,labelsep=2pt,nodesep=3pt}
\pspicture(0,22)


\rput(0,0){\rnode{a1}{$\CATd$}}  
\rput(0,20){\rnode{a2}{$\MNDd$}}  

\ncline[nodesepA=3pt,nodesepB=3pt]{->}{a1}{a2}\ncput*[labelsep=0pt]{{\scriptsize $\cat{Inc}$}} 

\nccurve[angleA=215,angleB=145,ncurv=0.9]{->}{a2}{a1}\nbput{{\scriptsize $\Und$}} \naput[labelsep=5.5pt]{{\scriptsize $\ladj$}}
\nccurve[angleA=-35,angleB=35,ncurv=0.9]{->}{a2}{a1}\naput{{\scriptsize $\Alg$}} \nbput[labelsep=5.5pt]{{\scriptsize $\ladj$}}

\endpspicture
\]

\end{proposition}

The endofunctor $\FM$ is a ``lift'' of the endofunctors $\FG$ and $\FC$, in the following sense; this will enable us to deduce the results about $\FC$ from those about $\FM$.

\begin{lemma} 
\label{lemma:comparing-Fs}\label{fourpointtwelve}

The squares
\[
\psset{unit=0.1cm,labelsep=3pt,nodesep=3pt}
\pspicture(0,-4)(24,25)



\rput(0,20){\rnode{a1}{$\MNDd$}}  
\rput(24,20){\rnode{a2}{$\MNDd$}}  
\rput(0,0){\rnode{a3}{$\CATd$}}  
\rput(24,0){\rnode{a4}{$\CATd$}}  

\ncline{->}{a1}{a2} \naput{{\scriptsize $\FM$}} 
\ncline{->}{a3}{a4} \nbput{{\scriptsize $\FG$}} 
\ncline{->}{a1}{a3} \nbput{{\scriptsize $\Und$}} 
\ncline{->}{a2}{a4} \naput{{\scriptsize $\Und$}} 

\endpspicture
\hh{6em}
\pspicture(0,-4)(24,25)



\rput(0,20){\rnode{a1}{$\MNDd$}}  
\rput(24,20){\rnode{a2}{$\MNDd$}}  
\rput(0,0){\rnode{a3}{$\CATd$}}  
\rput(24,0){\rnode{a4}{$\CATd$}}  

\ncline{->}{a1}{a2} \naput{{\scriptsize $\FM$}} 
\ncline{->}{a3}{a4} \nbput{{\scriptsize $\FC$}} 
\ncline{->}{a1}{a3} \nbput{{\scriptsize $\Alg$}} 
\ncline{->}{a2}{a4} \naput{{\scriptsize $\Alg$}} 

\endpspicture
\]
commute, the first strictly and the second up to a canonical isomorphism.

\end{lemma}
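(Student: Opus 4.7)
The plan is to unpack the definitions of $\FM$, $\FG$, $\FC$, $\Und$, and $\Alg$; both squares reduce to bookkeeping once Proposition~\ref{prop:fc-T} is invoked.

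First I would verify the $\Und$-square (strict commutativity). On an object $(\cV,T)\in\MNDd$, both $\Und\circ\FM(\cV,T)$ and $\FG\circ\Und(\cV,T)$ equal $\cV\cat{-Gph}$ on the nose, directly from the object-part of Definition~\ref{FM}. On a morphism $(H,\theta)$, the pasting diagram in Definition~\ref{FM} takes $H_*$ as the underlying functor of $\FM(H,\theta)$, so
\[
\Und\circ\FM(H,\theta) \;=\; H_* \;=\; \FG(H) \;=\; \FG\circ\Und(H,\theta).
\]
Thus the first square commutes on the nose.

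For the $\Alg$-square, Proposition~\ref{prop:fc-T}(ii) supplies, for each $(\cV,T)\in\MNDd$, a canonical isomorphism
\[
\alpha_{(\cV,T)}\colon \Alg\circ\FM(\cV,T) = (\cV\cat{-Gph})^{\fcv\circ T_*} \xrightarrow{\;\cong\;} \cV^T\cat{-Cat} = \FC\circ\Alg(\cV,T),
\]
namely the monadicity comparison arising from the diagonal of the forgetful-functor square. To verify naturality of $\alpha$ I would use the alternative presentation of a lax monad morphism as a commuting square of the kind recorded after Definition~\ref{FM}: in that presentation $(H,\theta)\colon(\cV,T)\to(\cV',T')$ corresponds to a pair $(H,K)$ with $K\colon\cV^T\to{\cV'}^{T'}$, and reading the morphism-part of Definition~\ref{FM} through the square presentation shows that $\Alg\circ\FM(H,\theta)$ is identified via $\alpha$ with $K_* = \FC(K) = \FC\circ\Alg(H,\theta)$.

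The only substantive point is the naturality of $\alpha$; this reduces to functoriality of the monadic comparison in the monad morphism, which is a routine consequence of the universality of the comparison functor together with the explicit description of $\FM$ on morphisms in terms of the lifted functor $K_*$. No new ingredient beyond Proposition~\ref{prop:fc-T} is required, so I expect no genuine obstacle, only a careful diagram chase through the alternative presentation of monad morphisms.
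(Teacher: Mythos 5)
Your proof is correct and takes essentially the same route as the paper: the paper also dismisses the $\Und$-square as trivial and reduces the $\Alg$-square to Proposition~\ref{prop:fc-T}(ii), which identifies $\cl{V}\cat{-Gph}^{\fcv\circ T_*}$ with $\cl{V}^T\cat{-Cat}$. Your additional remarks on naturality via the commutative-square presentation of monad morphisms just spell out what the paper leaves implicit.
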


\begin{proof}
The first assertion is trivial.  For the second: by the lower-left leg we have 
\[ (\cl{V},T) \ \mapsto \ \cl{V}^T \ \mapsto \ \cl{V}^T\cat{-Cat}.\]
By the upper-right leg we have
\[ (\cl{V},T) \ \mapsto \ \big(\cl{V}\cat{-Gph}, \ \fcv \circ T_*\big) \ \mapsto \
\cl{V}\cat{-Gph}^{\fcv \circ T_*}.\] 
Thus the result follows from Proposition~\ref{prop:fc-T}, part (ii).  
\end{proof}

We can now use $\FM$ to define the monads for $n$-categories.  This is analogous to our definitions of \cat{$n$-Gph} and \cat{$n$-Cat}.

\begin{definition} 
\label{newfourpointtwelve}

We define for each
$n \geq 0$ a monad $T_n$ on $n\cat{-Gph}$ as follows:
\begin{itemize}
\item $T_0$ is the identity monad on $\cat{Set} = 0\cat{-Gph}$

\item for all $n \geq 1$, $(n\cat{-Gph}, T_n) = \FM\big((n-1)\cat{-Gph}, \
T_{n-1}\big)$. 
\end{itemize}
By Proposition~\ref{fourpointtwelve} and induction, we have
\[\Alg(T_n) \iso \cat{$n$-Cat} \]
for each $n$.  Further, since the monads $T_n$ are in the image of $\FM$, they all preserve coproducts.

We also define for each $n \geq 1$ a lax morphism of monads
\[
(U_n, \gamma_n)\: 
(n\cat{-Gph}, T_n) \lra \big((n-1)\cat{-Gph}, T_{n-1}\big)
\]
in $\MNDd$ as follows:
\begin{itemize}
\item $(U_1, \gamma_1)\: (1\cat{-Gph}, \fc{}) \lra (\cat{Set},
1)$ is the monad morphism corresponding to the objects functor
$\cat{ob}\: \cat{Cat} \lra \cat{Set}$.

\item For all $n \geq 2$, $(U_n, \gamma_n) = \FM\left(U_{n-1}, \gamma_{n-1}\right)$.
\end{itemize}
Note that each $U_n \: n\cat{-Gph} \lra (n-1)\cat{-Gph}$ is the truncation functor as in Definition~\ref{defn:nGph}.

\end{definition}

Now when morphisms of monads are viewed as commutative squares, the diagram 
\begin{equation}\label{diagone} 
\cdots 
\map{(U_3, \gamma_3)} (2\cat{-Gph}, T_2) 
\map{(U_2, \gamma_2)} (1\cat{-Gph}, T_1) 
\map{(U_1, \gamma_1)} (0\cat{-Gph}, T_0) 
\end{equation}
becomes the following commutative diagram
\[
\psset{unit=0.1cm,labelsep=2pt,nodesep=2pt}
\pspicture(0,5)(80,35)


\rput(0,30){$\cdots$}
\rput(0,10){$\cdots$}

\rput(0,30){\rnode{x0}{\white{$\DG^3\1$}}}
\rput(30,30){\rnode{x1}{$\cat{$2$-Cat}$}}
\rput(55,30){\rnode{x2}{$\cat{$1$-Cat}$}}
\rput(80,30){\rnode{x3}{$\cat{$0$-Cat}$}}

\psset{nodesep=4pt}
\ncline{->}{x0}{x1} \naput{{\scriptsize $U_3$}}
\ncline{->}{x1}{x2} \naput{{\scriptsize $U_2$}}
\ncline{->}{x2}{x3} \naput{{\scriptsize $U_1$}}

\rput(0,10){\rnode{y0}{\white{$\DG^3\1$}}}

\rput(30,10){\rnode{y1}{$\cat{2-Gph}$}}
\rput(55,10){\rnode{y2}{$\cat{1-Gph}$}}
\rput(80,10){\rnode{y3}{$\cat{0-Gph}$}}

\psset{nodesep=2pt}
\ncline{->}{y0}{y1} \nbput{{\scriptsize $U_3$}}
\ncline{->}{y1}{y2} \nbput{{\scriptsize $U_2$}}
\ncline{->}{y2}{y3} \nbput{{\scriptsize $U_1$}}

\psset{nodesepA=5pt,nodesepB=3pt,labelsep=2pt}
\ncline{->}{x1}{y1} \naput{{\scriptsize $$}} \nbput{{\scriptsize $$}}
\ncline{->}{x2}{y2} \naput{{\scriptsize $$}} \nbput{{\scriptsize $$}}
\ncline{->}{x3}{y3} \naput{{\scriptsize $$}} \nbput{{\scriptsize $$}}

\endpspicture
\]
Hence there is an induced forgetful functor
\[
\omg\cat{-Cat} \lra \omg\cat{-Gph}.
\]

\begin{lemma} 

The forgetful functor $\omg\cat{-Cat} \lra \omg\cat{-Gph}$ is
monadic.  

\end{lemma}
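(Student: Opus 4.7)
The plan is to exhibit the forgetful functor in question as the forgetful functor $\Alg(T_\omega) \lra \omega\cat{-Gph}$ for a suitable monad $T_\omega$ on $\omega\cat{-Gph}$, so that monadicity is automatic.

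First, I would form the limit of the diagram (\ref{diagone}) in $\MNDd$. By Proposition~\ref{prop:FTM-dist}, the functor $\Und\: \MNDd \lra \CATd$ is a right adjoint and hence preserves all limits; since the image of (\ref{diagone}) under $\Und$ is exactly the defining diagram of $\omega\cat{-Gph}$ (Definition~\ref{threepointfive}), the underlying category of this limit monad is canonically $\omega\cat{-Gph}$. Write $T_\omega$ for the monad so obtained, so that the limit in $\MNDd$ is $(\omega\cat{-Gph},T_\omega)$. Strictly speaking, the existence of limits in $\MNDd$ (as opposed to in $\MND$, where the construction of the limit is standard) requires that the distributivity and coproduct-preservation conditions survive passage to the limit; this is the kind of technicality deferred to the Appendix, but by the remark at the end of the introduction we are entitled to treat such limits as existing and computed as expected.

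Next, I would apply $\Alg\: \MNDd \lra \CATd$ to this limit. By Proposition~\ref{prop:FTM-dist}, $\Alg$ is likewise a right adjoint, so it preserves limits. Applying it to (\ref{diagone}), and using the fact from Definition~\ref{newfourpointtwelve} that $\Alg(T_n) = n\cat{-Cat}$ with $\Alg(U_n,\gamma_n)$ the truncation functor, one obtains
\[
\Alg(T_\omega) \;\iso\; \lim_n\, \Alg(T_n) \;=\; \lim_n\, n\cat{-Cat} \;=\; \omega\cat{-Cat},
\]
where the final equality is Definition~\ref{newfourpointfour}. Moreover this isomorphism is compatible with the forgetful functors down to $\omega\cat{-Gph} = \lim_n n\cat{-Gph}$, because the limits in $\MNDd$ are taken in such a way that the forgetful square over each $(n\cat{-Gph},T_n)$ passes to the limit.

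It follows that the forgetful functor $\omega\cat{-Cat} \lra \omega\cat{-Gph}$ is (up to canonical isomorphism) the Eilenberg--Moore forgetful functor for $T_\omega$, and is therefore monadic. The only nontrivial point is the construction and preservation of the limit in $\MNDd$; this is where the conditions built into the definition of $\MNDd$ (finite products, small coproducts, distributivity, coproduct-preservation by $T$) play a role, and is the content dispatched in the Appendix.
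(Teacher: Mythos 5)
Your overall strategy---build the limit monad $(\omega\cat{-Gph},T_\omega)$ in $\MNDd$ first and then identify its Eilenberg--Moore category with $\omega\cat{-Cat}$ via $\Alg$---is a legitimate reorganisation, and the second half of your argument is sound: $\Alg$ really is a right adjoint (Proposition~\ref{prop:FTM-adjns}), so it preserves the limit, and monadicity is stable under isomorphism over the base. But the first half rests on a false premise. In the adjoint string of Proposition~\ref{prop:FTM-adjns} we have $\Und \ladj \cat{Inc} \ladj \Alg$, so $\Und$ is a \emph{left} adjoint, not a right adjoint; it preserves colimits, not limits. More to the point, because the morphisms of $\MND$ are \emph{lax}, the forgetful functor to $\CAT$ neither preserves nor creates limits in general (a cone of lax morphisms into a sequential diagram need not assemble into a single lax morphism into any monad on the limit category). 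This is exactly why the Appendix proves Proposition~\ref{twopointseven}, a \emph{reflection} result whose hypotheses require every monad morphism in the diagram and in the cone to be \emph{weak}, i.e.\ to lie in $\MNDwk$. So you cannot simply say "the limit in $\MNDd$ exists and has underlying category $\omega\cat{-Gph}$"; you must first construct a candidate cone with vertex over $\omega\cat{-Gph}$, check that all the $(U_n,\gamma_n)$ and the projections are weak, and only then invoke Proposition~\ref{twopointseven}. That verification is the substance of the proof of Theorem~\ref{fourpointsixteen} and is not dispatched by the introductory remark you cite.

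For comparison, the paper proves this lemma by a quite different and more elementary route: it quotes Theorem~F.2.2 of~\cite{lei8}, where monadicity of $\omega\cat{-Cat} \lra \omega\cat{-Gph}$ is deduced from monadicity of each $n\cat{-Cat} \lra n\cat{-Gph}$ together with the fact that the truncations $U_n\colon n\cat{-Cat} \lra (n-1)\cat{-Cat}$ are isofibrations in the sense of~\cite{lac3}; the monad $\Tom$ is then \emph{defined} as the one so induced, and only afterwards is $(\omega\cat{-Gph},\Tom)$ shown to be the limit in $\MNDd$. Your route, once repaired by replacing the "$\Und$ preserves limits" step with an explicit application of Proposition~\ref{twopointseven} (including the weakness checks), would work and would make the monadicity lemma a corollary of the limit computation rather than an input to it; but as written the key step is unjustified.
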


\begin{proof}
See Theorem~F.2.2 of~\cite{lei8}.  (The result follows from the monadicity of
the functors $n\cat{-Cat} \lra n\cat{-Gph}$, using the fact that the
truncations 
\[U_n\: n\cat{-Cat} \lra (n-1)\cat{-Cat}\] 
are isofibrations in the sense of \cite{lac3}.)
\end{proof}

\begin{definition}
Write $\Tom$ for the induced monad on $\omg\cat{-Gph}$; thus, $\Tom$ is
the monad for $\omg$-categories.  
\end{definition}

Eventually we will take the limit of the above sequential diagram (\ref{diagone}) and show that it gives the monad $\Tom$; for now we show that we have a cone as below.
\begin{equation}\label{diagc}
\psset{unit=0.095cm,labelsep=2pt,nodesep=2pt}
\pspicture(0,-5)(114,24)

\rput(20,20){\rnode{a}{$(\cat{$\omg$-Gph}, \Tom)$}}

\rput(0,0){$\cdots$}

\rput(0,0){\rnode{x3}{\white{$\DG^3\1$}}}
\rput(30,0){\rnode{x2}{$(\cat{$2$-Gph}, T_2)$}}
\rput(70,0){\rnode{x1}{$(\cat{$1$-Gph}, T_1)$}}
\rput(108,0){\rnode{x0}{$(\cat{$0$-Gph}, T_0)$}}

\psset{nodesep=2pt}
\ncline{->}{x3}{x2} \naput{{\scriptsize $$}}
\ncline{->}{x2}{x1} \nbput{{\scriptsize $({U_2}, \gamma_2)$}}
\ncline{->}{x1}{x0} \nbput{{\scriptsize $(U_1, \gamma_1)$}}

\ncline{->}{a}{x0} \naput[npos=0.65,labelsep=1pt]{{\scriptsize $(\mbox{tr}_0, \tau_0)$}}
\ncline{->}{a}{x1} \nbput[npos=0.55,labelsep=1pt]{{\scriptsize $(\mbox{tr}_1, \tau_1)$}}
\ncline{->}{a}{x2} \nbput[npos=0.65,labelsep=1pt]{{\scriptsize $(\mbox{tr}_2, \tau_2)$}}
\ncline{->}{a}{x3} \naput{{\scriptsize $$}}

\endpspicture
\end{equation}
For each $n \geq 0$, the truncation functors $\tr_n$ participate in a
commutative square
\[
\psset{unit=0.1cm,labelsep=3pt,nodesep=3pt}
\pspicture(0,-4)(25,24)



\rput(0,20){\rnode{a1}{\cat{$\omg$-Cat}}}  
\rput(25,20){\rnode{a2}{\cat{$n$-Cat}}}  
\rput(0,0){\rnode{a3}{\cat{$\omg$-Gph}}}  
\rput(25,0){\rnode{a4}{\cat{$n$-Gph}}}  

\ncline{->}{a1}{a2} \naput{{\scriptsize $\mbox{tr}_n$}} 
\ncline{->}{a3}{a4} \nbput{{\scriptsize $\mbox{tr}_n$}} 
\ncline{->}{a1}{a3} \nbput{{\scriptsize $$}} 
\ncline{->}{a2}{a4} \naput{{\scriptsize $$}} 

\endpspicture
\]
This gives the graph truncation functor $\tr_n$ the structure of a lax
morphism of monads, which we write as
\[
(\tr_n, \tau_n):
(\omg\cat{-Gph}, \Tom) \lra (n\cat{-Gph}, T_n).
\]

We now embark on the proof that the free $\omg$-category monad
$(\omg\cat{-Gph}, \Tom)$ is the terminal coalgebra for $\FM$.

\begin{theorem} 
\label{thm:strict-monad-tc}\label{fourpointsixteen}

The strict $\omg$-category monad $(\omg\cat{-Gph}, \Tom)$ is the terminal
coalgebra for $\FM$.

\end{theorem}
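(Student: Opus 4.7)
The plan is to apply Adámek's Theorem (Theorem~\ref{adamek}) to $\FM$ on $\MNDd$, in a manner exactly parallel to the strategy used for $\FG$ in the proof of Theorem~\ref{thm:gph-tc}. The terminal object of $\MNDd$ is $(\1, 1)$, where $\1$ is the terminal category equipped with its identity monad. Thus I need to show two things: (a) that the Adámek diagram for $\FM$ starting from $(\1, 1)$ is, up to canonical isomorphism, the diagram~(\ref{diagone}) whose limit defines the cone~(\ref{diagc}); and (b) that this limit is preserved by $\FM$. Both together give the terminal coalgebra as the limit $(\omega\cat{-Gph}, \Tom)$.

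For (a), I would argue by induction, just as in the proof of Theorem~\ref{thm:gph-tc} and in Lemma~\ref{lemma:FC-ladder}. The base case $\FM(\1, 1) \iso (0\cat{-Gph}, T_0) = (\Set, 1)$ uses $\1\cat{-Gph} \iso \Set$ together with the observation that any $\1$-graph has trivial hom data, making the free $\1$-category monad the identity. The inductive step is immediate from the definition $(n\cat{-Gph}, T_n) = \FM((n-1)\cat{-Gph}, T_{n-1})$, and a parallel induction based on $(U_n, \gamma_n) = \FM(U_{n-1}, \gamma_{n-1})$ shows that the connecting maps agree. That the limit of~(\ref{diagone}) in $\MNDd$ exists and is $(\omega\cat{-Gph}, \Tom)$ with cone~(\ref{diagc}) uses the monadicity of $\omega\cat{-Cat} \lra \omega\cat{-Gph}$ established in the preceding lemma, together with the Appendix technology confirming that limits in $\MNDd$ are computed by taking limits in $\CATd$ and inducing the appropriate monad.

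The substance of the argument lies in (b). This follows the same template as the preservation step in Theorem~\ref{thm:gph-tc}, but lifted from plain categories to monads. At the level of underlying graphs, the calculation is exactly the one already carried out there for $\FG$, so the underlying category of the limit of $\FM$ applied termwise is $(\omega\cat{-Gph})\cat{-Gph}$. For the monad part, one then needs to recognise the induced limit monad as $\fc{\omega\cat{-Gph}} \circ (\Tom)_\ast$, the monad that $\FM$ assigns to $(\omega\cat{-Gph}, \Tom)$. This depends on the distributive-law construction $(\cV, T) \mapsto \fcv \circ T_\ast$ of Proposition~\ref{prop:fc-T} commuting with the sequential limit, which in turn hinges on the hypothesis (built into $\MNDd$) that each $T_n$ preserves small coproducts, together with the fact that $\fcv$ is constructed entirely from finite products and small coproducts, both preserved by the truncation functors.

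I expect this final compatibility check to be the main technical obstacle, since it combines the graph-level preservation of Theorem~\ref{thm:gph-tc} with the need to transport a distributive law across the limit; the supporting lemmas are those deferred to the Appendix. Once preservation is verified, Adámek's Theorem delivers $(\omega\cat{-Gph}, \Tom)$ as the terminal coalgebra for $\FM$.
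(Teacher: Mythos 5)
Your overall scaffolding (Adámek's Theorem, the inductive identification of the Adámek diagram with diagram~(\ref{diagone}), and the two obligations of exhibiting the limit and showing $\FM$ preserves it) matches the paper. But the step you yourself flag as ``the main technical obstacle'' --- recognising the induced limit monad as $\fc{\omega\cat{-Gph}} \circ (\Tom)_*$ and transporting the distributive law of Proposition~\ref{prop:fc-T} across the sequential limit --- is left entirely unproven, and it is exactly the part of the argument that needs an idea. As written, your proposal does not establish preservation; it describes what would have to be checked and asserts that coproduct preservation should make it work.

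The idea you are missing is that every morphism in the cone~(\ref{diagc}) --- both the connecting maps $(U_n,\gamma_n)$ and the projections $(\tr_n,\tau_n)$ --- is a \emph{weak} morphism of monads, i.e.\ the whole diagram lives in $\MNDwk$. (This is why $\MNDwk$ is introduced: $!$ is weak, $\FM$ visibly preserves weakness since it sends $\theta$ to $\theta_*$ pasted with an isomorphism, and the $(\tr_n,\tau_n)$ are weak by the monadicity argument.) Given weakness, Proposition~\ref{twopointseven} from the Appendix says that a cone in $\MNDwk$ whose underlying cone in $\CAT$ is a limit and lies in $\CATd$ is automatically a limit in $\MND$ and $\MNDd$ --- with no need to identify the induced monad on the limit object at all. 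The paper applies this twice: once to the cone~(\ref{diagc}) itself (whose underlying cone is the defining limit of $\omega\cat{-Gph}$), and once to its image under $\FM$ (whose underlying cone is the image under $\FG$ of that limit, already shown to be a limit in Theorem~\ref{thm:gph-tc}). This completely sidesteps the distributive-law compatibility you were worried about. Note also that your bare statement that ``limits in $\MNDd$ are computed by taking limits in $\CATd$ and inducing the appropriate monad'' is not true for general cones of \emph{lax} monad morphisms; the weakness hypothesis is essential, so you would need to supply it even to make your step (a) rigorous.
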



\begin{proof}

We use Ad\'amek's theorem.  We need to consider the limit in $\MNDd$ of
\begin{equation}\label{diagtwo}
\psset{unit=0.1cm,labelsep=2pt,nodesep=2pt}
\pspicture(0,-1)(80,5)


\rput(0,0){$\cdots$}

\rput(0,0){\rnode{x0}{\white{$\DG^3\1$}}}
\rput(30,0){\rnode{x1}{$\FM^3\1$}}
\rput(55,0){\rnode{x2}{$\FM^2\1$}}
\rput(80,0){\rnode{x3}{$\FM\1$}}

\psset{nodesep=4pt}
\ncline{->}{x0}{x1} \naput{{\scriptsize $\FG^3!$}}
\ncline{->}{x1}{x2} \naput{{\scriptsize $\FG^2!$}}
\ncline{->}{x2}{x3} \naput{{\scriptsize $\FG!$}}

\endpspicture
\end{equation}

\vv{1em}

\noi where $\1$ is the terminal object $(\1, 1)$ of $\MNDd$. (We also continue to write the terminal object of $\CAT$ as $\1$, and the identity monad as $1$.)  We need to show 
\begin{enumerate}
 \item the limit is $(\cat{$\omg$-Gph}, \Tom)$, and
\item the limit is preserved by $\FM$.
\end{enumerate}

First we note that the above diagram (\ref{diagtwo}) is isomorphic to our diagram of $n$-dimensional monads and truncations; formally, there are canonical isomorphisms $(I_n, \iota_n)_{n \geq 0}$ of monads, with
$I_n$ as in the analogous result for graphs (see proof of Theorem~\ref{thm:gph-tc}), such that the following diagram
commutes:
\[
\psset{unit=0.1cm,labelsep=2pt,nodesep=2pt}
\pspicture(0,5)(100,39)


\rput(0,35){$\cdots$}
\rput(0,10){$\cdots$}

\rput(0,35){\rnode{x0}{\white{$\DG^3\1$}}}
\rput(30,35){\rnode{x1}{$\FM^3\1$}}
\rput(65,35){\rnode{x2}{$\FM^2\1$}}
\rput(100,35){\rnode{x3}{$\FM\1$}}

\psset{nodesep=4pt}
\ncline{->}{x0}{x1} \naput{{\scriptsize $\FM^3!$}}
\ncline{->}{x1}{x2} \naput{{\scriptsize $\FM^2!$}}
\ncline{->}{x2}{x3} \naput{{\scriptsize $\FM!$}}

\rput(3,10){\rnode{y0}{\white{$\DG^3\1$}}}

\rput(30,10){\rnode{y1}{$(\cat{2-Gph},T_2)$}}
\rput(65,10){\rnode{y2}{$(\cat{1-Gph},T_1)$}}
\rput(100,10){\rnode{y3}{$(\cat{0-Gph},T_0)$}}

\psset{nodesep=2pt}
\ncline{->}{y0}{y1} \nbput{{\scriptsize $(U_3,\gamma_3)$}}
\ncline{->}{y1}{y2} \nbput{{\scriptsize $(U_2,\gamma_2)$}}
\ncline{->}{y2}{y3} \nbput{{\scriptsize $(U_1,\gamma_1)$}}

\psset{nodesepA=5pt,nodesepB=3pt,labelsep=2pt}
\ncline{->}{x1}{y1} \naput{{\scriptsize $(I_2,\iota_2)$}} \nbput{{\scriptsize $\iso$}}
\ncline{->}{x2}{y2} \naput{{\scriptsize $(I_1,\iota_1)$}} \nbput{{\scriptsize $\iso$}}
\ncline{->}{x3}{y3} \naput{{\scriptsize $(I_0,\iota_0)$}} \nbput{{\scriptsize $\iso$}}

\endpspicture
\]
As before, this is true by a straightforward induction: we have
\[
\FM\1 = (\cat{$\1$-Gph}, 1) \iso (\cat{$0$-Gph},1)
\]
and as the rightmost square of the diagram commutes, the whole diagram commutes.

This shows that the limit we need to take is over the following diagram in $\MNDd$
\[ 
\cdots 
\map{(U_3, \gamma_3)} (\cat{$2$-Gph}, T_2) 
\map{(U_2, \gamma_2)} (\cat{$1$-Gph}, T_1) 
\map{(U_1, \gamma_0)} (\cat{$0$-Gph}, T_0)
\]
So our aim is to show that the cone (\ref{diagc}) is a limit in $\MNDd$.
We know the following.
\begin{enumerate}
 \item The underlying graph part is a limit cone in $\CAT$ (Definition~\ref{threepointfive}).
\item The underlying graph part is a cone in $\CATd$; this follows from the equivalences
\[\cat{$n$-Gph} \catequiv \cat{$n$-GSet}\]
\[\cat{$\omg$-Gph} \catequiv \cat{GSet}.\]

\item The diagram is in $\MNDwk$. For this we need to show that all the monad morphisms are weak, not just lax.  Certainly each $(\mbox{tr}_n, \tau_n)$ is weak by the proof of Theorem~F.2.2 of~\cite{lei8}.  (In fact, this proof shows that if we are willing to
replace each $T_n$ by an isomorphic monad, then we may arrange that each
$\gamma_n$ and $\tau_n$ is an identity.)  For the $(U_n, \gamma_n)$ it suffices to show that $\FM^n !$ is weak for all $n$.  Now $!$ is certainly weak, and $\FM$ preserves weakness---examining the definition of $\FM$ on morphisms (Definition~\ref{fourpointten}) we see that if $\theta$ is an isomorphism then $\theta_*$ is too, which gives the result.

\end{enumerate}

So by Proposition~\ref{twopointseven} our diagram is a limit in $\MNDd$ and also in $\MND$; we will use the latter in the next proof.


We now show that $\FM$ preserves this limit, also using Proposition~\ref{twopointseven}.  We have the same three steps as above, for $\FM$ of the limit diagram as follows.

\begin{enumerate}
 \item The underlying graph part is a limit cone in \CAT; this amounts to $\FG$ preserving the underlying limit in \CAT\ which we know from Section~\ref{strictcat}.

\item The underlying graph part is also a cone in $\CATd$ as above.

\item The diagram is in $\MNDwk$ as we have seen above that $\FM$ preserves weakness.
\end{enumerate}

Hence the new cone is also a limit in $\MNDd$ and $\MND$. \end{proof}


We can now deduce that $\omg\cat{-Cat}$ is the terminal coalgebra for $\FC$.

\begin{prfof}{Theorem~\ref{thm:strict-cat-tc}}
We use Ad\'amek's Theorem.  The isomorphisms of Lemma~\ref{lemma:FC-ladder} tell us it suffices to show
that the following is a limit in $\CATp$.
\begin{equation}\label{diagfour}
\psset{unit=0.1cm,labelsep=2pt,nodesep=2pt}
\pspicture(0,-4)(120,25)


\rput(20,20){\rnode{a}{\cat{$\omg$-Cat}}}

\rput(0,0){$\cdots$}

\rput(0,0){\rnode{x0}{\white{$\DG^3\1$}}}
\rput(25,0){\rnode{x1}{$\cat{$n$-Cat}$}}
\rput(55,0){\rnode{x2}{$\cat{$(n-1)$-Cat}$}}
\rput(85,0){\rnode{x3}{$\cdots$}}
\rput(110,0){\rnode{x4}{$\cat{$0$-Cat}$}}

\psset{nodesep=2pt}
\ncline{->}{x0}{x1} \naput{{\scriptsize $$}}
\ncline{->}{x1}{x2} \naput{{\scriptsize $$}}
\ncline[nodesepB=12pt]{->}{x2}{x3} \naput{{\scriptsize $$}}
\ncline[nodesepA=12pt]{->}{x3}{x4} \naput{{\scriptsize $$}}

\ncline{->}{a}{x0} \naput{{\scriptsize $$}}
\ncline{->}{a}{x1} \naput{{\scriptsize $$}}
\ncline{->}{a}{x2} \naput{{\scriptsize $$}}
\ncline{->}{a}{x4} \naput{{\scriptsize $$}}

\endpspicture
\end{equation}
and that $\FC$ preserves this limit.  We know the following facts.
\begin{enumerate}
 \item It is a limit in $\CAT$ (Definition~\ref{newfourpointfour}).
\item It is the image under $\Alg$ of a cone (\ref{diagc}) in $\MNDd$.
\item $\Alg$ restricts to a functor $\MNDd \lra \CATd$, so the diagram (\ref{diagfour}) is a cone in $\CATd$ and hence $\CATp$ (as \CATd\ is a subcategory of \CATp).
\item The inclusion $\CATp \hra \CAT$ reflects limits (Proposition~\ref{twopointeight}) so the diagram is a limit in \CATp.
\end{enumerate}
To show that $\FC \: \CATp \tra \CATp$ preserves this limit, it is again enough to show that $\FC$ of the limit cone is a limit cone in $\CAT$, as the inclusion reflects limits.  We use the
commutative square 
\[
\psset{unit=0.1cm,labelsep=3pt,nodesep=3pt}
\pspicture(0,-4)(24,24)



\rput(0,20){\rnode{a1}{$\MNDd$}}  
\rput(24,20){\rnode{a2}{$\MNDd$}}  
\rput(0,0){\rnode{a3}{$\CATd$}}  
\rput(24,0){\rnode{a4}{$\CATd$}}  

\ncline{->}{a1}{a2} \naput{{\scriptsize $\FM$}} 
\ncline{->}{a3}{a4} \nbput{{\scriptsize $\FC$}} 
\ncline{->}{a1}{a3} \nbput{{\scriptsize $\Alg$}} 
\ncline{->}{a2}{a4} \naput{{\scriptsize $\Alg$}} 

\endpspicture
\]
of Lemma~\ref{lemma:comparing-Fs}.  Now the limit cone we need to preserve is diagram (\ref{diagfour}), which we write as follows
\[(\cat{$\omg$-Cat} \map{\mbox{tr}_n} \cat{$n$-Cat})_{n\geq 0}.\]
This is in fact
\[\Alg 
\left(
(\omg\cat{-Gph}, \Tom) \map{(\tr_n, \tau_n)} (n\cat{-Gph}, T_n)
\right)_{n \geq 0}\]
so applying $\FC$ we get
\[\FC \Alg 
\left(
(\omg\cat{-Gph}, \Tom) \map{(\tr_n, \tau_n)} (n\cat{-Gph}, T_n)
\right)_{n \geq 0}\]
which, by the above commutative square is
\[\Alg \FM
\left(
(\omg\cat{-Gph}, \Tom) \map{(\tr_n, \tau_n)} (n\cat{-Gph}, T_n)
\right)_{n \geq 0}\]
and we know this is a limit as follows.  In the previous proof we showed that 
\[\FM
\left(
(\omg\cat{-Gph}, \Tom) \map{(\tr_n, \tau_n)} (n\cat{-Gph}, T_n)
\right)_{n \geq 0}\]
is a limit in $\MND$; furthermore we know that $\Alg$ has a left adjoint and so preserves limits (Proposition~\ref{twopointthree}). Thus we have the limit required.
\end{prfof}


\begin{remark}
Our proof that $\omg\cat{-Cat}$ is monadic over $\omg\cat{-Gph}$ did not
reveal an explicit description of the monad.  But in fact this monad, the free
strict $\omg$-category monad $\Tom$, does have an explicit description, at
least if one changes $\omg\cat{-Gph}$ to the equivalent category
$\cat{GSet}$.  Indeed, re-using the notation $\Tom$ and $T_n$ for the free
$\omg$- and $n$-category monads, one may find in the paper~\cite{bat1} of
Batanin an explicit construction of the whole limit cone
\[
\big(
(\cat{GSet}, \Tom) \lra (n\cat{-GSet}, T_n)
\big)_{n \geq 0}.
\]
Transferring across the equivalence between globular sets and graphs gives
the limit cone
\[\left( 
(\omg\cat{-Gph}, \Tom) \map{(\tr_n, \tau_n)} (n\cat{-Gph}, T_n)
\right)_{n \geq 0}.\] 
\end{remark}


\section{Batanin--Leinster weak $\omg$-categories}\label{sec:weak}
\label{batanin}

In the case of weak $\omg$-categories we cannot simply use a limit of the
categories of $n$-categories as above.  This is because truncating a weak
$\omg$-category to $n$ dimensions does \emph{not} produce a weak
$n$-category, as the top dimension will not be coherent.  The main idea of
this section is that we must instead make the construction using ``incoherent
$n$-categories''---these should not be completely incoherent, but just as
incoherent as a truncated weak $\omg$-category. 

The notion of weak $\omg$-category that we use here is Leinster's, which is
based in turn on Batanin's~\cite{bat1}.  See~\cite{lei7} for the definition in
concise terms, and~\cite{cl1} or~\cite{lei8} for explanation.

In later sections we will use the same idea to construct a theory of
Trimble-like weak $\omg$-categories, the point being that while we cannot
use Trimble's inductive definition to produce $\omg$-categories, we can use
it to produce incoherent $n$-categories for each finite $n$, and then
construct $\omg$-categories as a limit of those.

There are no terminal coalgebras in this section.  This is because in the Batanin--Leinster version, the theory
of incoherent $n$-categories is not obtained from the theory of incoherent $(n
- 1)$-categories by applying an endofunctor; at least, that is not how
\emph{we} obtain it.  The main results will, however, concern sequential
limits of the same type that we have seen repeatedly in applying Ad\'amek's
Theorem, and that we will see again when we come to Trimble $n$-categories.  Thus this section can be seen as motivation and justification for the use of incoherent $n$-categories in the Trimble case.

First we give a low-dimensional example to illuminate the idea of incoherent
$n$-categories. 

\begin{myexample} 
\label{eg:2-trunc}

Let $A$ be a weak $\omg$-category.  If we truncate its underlying globular
set to 2 dimensions, we have the following data: a 2-globular set
%
\[
\psset{unit=0.1cm,labelsep=0pt,nodesep=3pt}
\pspicture(0,-3)(40,3)
\rput(0,0){\rnode{A}{$A_2$}}
\rput(20,0){\rnode{B}{$A_1$}}
\rput(40,0){\rnode{C}{$A_0$}}
\psset{nodesep=3pt,arrows=->}
\ncline[offset=3.5pt]{A}{B}\naput[npos=0.5,labelsep=2pt]{{\scriptsize $s$}}
\ncline[offset=-3.5pt]{A}{B}\nbput[npos=0.5,labelsep=2pt]{{\scriptsize$t$}}
\ncline[offset=3.5pt]{B}{C}\naput[npos=0.5,labelsep=2pt]{{\scriptsize$s$}}
\ncline[offset=-3.5pt]{B}{C}\nbput[npos=0.5,labelsep=2pt]{{\scriptsize$t$}}

\endpspicture
\]
equipped with identities, composition and coherence cells as for a bicategory,
but \emph{satisfying no axioms}.  For example we would have specified 2-cells 
\[a\:(hg)f \trta h(gf)\]
and
\[a^*\:h(gf) \trta (hg)f\]
but no stipulation that these be inverses or satisfy the pentagon axiom.  In
this way the structure is ``not coherent'' (since it does not obey the usual
coherence laws) but is also ``not completely incoherent'' since it does give
\emph{some} relationship between different composites of the same diagram,
just not a strong enough one. 

\end{myexample}

At the heart of this section is the thought that the following two structures are the same even in the case of weak structures:
\begin{itemize}
 \item an $\omg$-category truncated to $n$ dimensions, and 
\item a $k$-category truncated to $n$ dimensions, for any $k > n$.
\end{itemize}
This is stated precisely after Corollary~\ref{cor:trunc-iopd}. 

In this section we write $\omg\cat{-Cat}$ and $n\cat{-Cat}$ for the
categories of \emph{weak} $\omg$- and $n$-categories according to the Batanin--Leinster definition.  We will recall the
definitions shortly, and we will also define the category $n\cat{-iCat}$ of
incoherent $n$-categories.  Theorem~\ref{thm:weak-cat-tc} states that weak $\omg$-categories can be built from incoherent $n$-categories in the sense that
$\omg\cat{-Cat}$ is the limit of the following diagram:
\[
\cdots
\lra 2\cat{-iCat}
\lra 1\cat{-iCat}
\lra 0\cat{-iCat}
\]
We will make the definition of incoherent $n$-category directly, using only
$n$-dimensional notions, and then show that it is the same as the structure
obtained by truncating weak $\omg$-categories.

We now recall some definitions.  Batanin--Leinster $\omg$-categories are defined via $T$-operads for a particular cartesian monad $T$.  Recall that a cartesian monad is one whose functor part preserves pullbacks and whose natural transformation parts are cartesian, that is, their naturality squares are all pullbacks.  In what follows, $\cl{V}$ is a category with finite
limits and $T$ is a {cartesian monad} on $\cl{V}$.  

\begin{mydefinition} 
\label{defn:T-opd}

A \demph{$T$-operad} is a cartesian monad $P$ on $\cl{V}$ together with a
cartesian natural transformation $\pi\: P \Tra T$ commuting with the monad
structures.  By the standard abuse of notation, we usually refer to a
$T$-operad $(P, \pi)$ as $P$.

\end{mydefinition}

The history of this definition goes back to a 1971 paper of
Burroni~\cite{bur1} though the word ``operad'' comes from
May~\cite{may2}.  An alternative point of view on the definition, closer to
Burroni's and put forward in the work of Batanin~\cite{bat1}, is as follows.

The category $T\cat{-Coll}$ of \demph{$T$-collections} is defined to be the
slice category $\cl{V}/T1$.  This carries a natural monoidal structure (as
described in~\cite{lei7}, for instance).  A $T$-operad is exactly a monoid in
the monoidal category $T\cat{-Coll}$.  See Corollary~6.2.4 of~\cite{lei8} for
a proof, and Chapter~4 of~\cite{lei8} for general explanation and examples of
$T$-operads.

In this paper we will not need the details of this alternative point of view.
All we need to know is how a $T$-operad in the sense of
Definition~\ref{defn:T-opd} gives rise to a collection, namely: for a
$T$-operad $(P, \pi)$, the resulting collection is
\[
\psset{unit=0.1cm,labelsep=2pt,nodesep=2pt}
\pspicture(5,17)

\rput(0,15){\rnode{a1}{$P1$}}  
\rput(0,2){\rnode{a2}{$T1$}}  

\ncline{->}{a1}{a2} \naput{{\scriptsize $\pi_1$}} 

\endpspicture
\]
Let us now investigate what happens as the monad $T$ is varied.  This will be
applied when we come to consider $n$-categories for varying values of $n$.
Details can be found in Section~6.7 of~\cite{lei8}.

Let
\[
(H, \theta)\: (\cl{V}, T) \lra (\cl{V}', T')
\]
be a weak morphism of monads, where the categories $\cl{V}$ and $\cl{V}'$ 
have finite limits, the functor $H$ preserves them, and the monads $T$ and
$T'$ are cartesian.  Then any $T$-operad $P$ gives rise canonically to a
$T'$-operad $HP$.  (This is easier to see in the collection point of view on
operads.)  Moreover, there is an induced weak morphism of monads
\[
(H, \widetilde{\theta})\: (\cl{V}, P) \lra (\cl{V}', HP).
\]
We will use these constructions shortly.

We will always take $\cl{V}$ to be the category of $n$-globular sets, and $T$
to be the free strict $n$-category monad, for varying values of $n \in
\mathbb{N} \cup \{\omg\}$.  (In this section only, we work with $n$-globular
sets rather than $n$-graphs.)  This gives the notion of globular operad,
introduced by Batanin~\cite{bat1}.

\begin{mydefinition} 

A \demph{globular operad} is a $\Tom$-operad, where $\Tom$ is the free strict
$\omg$-category monad on $\cat{GSet}$.  Similarly, for $n \in \mathbb{N}$, a
\demph{(globular) $n$-operad} is a $T_n$-operad, where $T_n$ is the free
strict $n$-category monad on $n\cat{-GSet}$.  An \demph{algebra} for a
globular operad or $n$-operad is an algebra for its underlying monad.  

\end{mydefinition}

The definition makes sense because $\cat{GSet}$ and $n\cat{-GSet}$ have finite
limits (being presheaf categories) and because the monads $\Tom$ and $T_n$ are
cartesian~\cite[Section F.2]{lei8}.

Later we will need the following technical fact.

\begin{lemma} 
\label{lemma:opd-dist}

Let $n \in \mathbb{N} \cup \{\omg\}$ and let $P$ be an $n$-operad.  Then
$(n\cat{-GSet}, P) \in \MNDd$.

\end{lemma}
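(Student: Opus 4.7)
The plan is to verify the two defining conditions for $(n\cat{-GSet}, P) \in \MNDd$: first, that $n\cat{-GSet}$ lies in $\CATd$; second, that the functor part of $P$ preserves small coproducts. The first condition is a general property of presheaf categories, while the second is the key point and uses cartesianness of $\pi\: P \Ra T_n$ in an essential way.

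For the first condition, the category $n\cat{-GSet}$ (and $\cat{GSet}$, when $n = \omega$) is a presheaf category on a small category, so it is locally small and has all small limits and colimits computed pointwise. The distributivity of finite products over small coproducts in $\Set$ therefore lifts pointwise, placing $n\cat{-GSet}$ in $\CATd$.

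For the second condition, I would use that $T_n$ itself preserves small coproducts: for finite $n$, this is because $(n\cat{-GSet}, T_n)$ lies in the image of $\FM$ by Definition~\ref{newfourpointtwelve}, and for $n = \omega$ it follows from the limit characterisation of $(\cat{GSet}, \Tom)$ in $\MNDd$ established in Theorem~\ref{fourpointsixteen}. Now let $(X_i)_{i \in I}$ be a small family with coproduct $X = \coprod_i X_i$ and inclusions $\iota_i\: X_i \to X$. Since $\pi\: P \Ra T_n$ is cartesian, each naturality square
\[
\begin{array}{ccc}
PX_i & \xrightarrow{\,P\iota_i\,} & PX \\
{\scriptstyle \pi_{X_i}} \downarrow \phantom{\scriptstyle \pi_{X_i}} & & \phantom{\scriptstyle \pi_X}\downarrow {\scriptstyle \pi_X} \\
T_n X_i & \xrightarrow{\,T_n \iota_i\,} & T_n X
\end{array}
\]
is a pullback, so $PX_i \iso T_n X_i \times_{T_n X} PX$. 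Since $T_n X \iso \coprod_i T_n X_i$ and $n\cat{-GSet}$ is extensive (being a presheaf category on a small category), this coproduct decomposition of $T_n X$ pulls back along $\pi_X$ to exhibit $PX$ as the coproduct of the pieces $T_n X_i \times_{T_n X} PX \iso PX_i$. Thus $PX \iso \coprod_i PX_i$, and $P$ preserves small coproducts.

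The main subtlety is the appeal to extensivity of $n\cat{-GSet}$: mere infinite distributivity is not quite enough to transport a coproduct decomposition across a pullback, but lextensivity of presheaf categories on small categories supplies exactly what is needed, and the argument runs uniformly in $n \in \N \cup \{\omega\}$.
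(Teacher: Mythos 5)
Your proof is correct and follows essentially the same route as the paper's: $n\cat{-GSet}$ is infinitely distributive because it is a presheaf category, and $P$ preserves small coproducts because $T_n$ does, $\pi\colon P \Rightarrow T_n$ is cartesian, and coproducts in a presheaf category are stable under pullback (your appeal to extensivity is just a more explicit packaging of that last fact). The only cosmetic difference is sourcing: the paper cites Section F.2 of Leinster for coproduct-preservation of the free strict $n$-category monad on $n\cat{-GSet}$ directly, whereas you route through the monads on $n\cat{-Gph}$ and the equivalence of categories, which works but is a slight detour.
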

(When $n = \omg$, the notation $n\cat{-GSet}$ means $\cat{GSet}$.)

\begin{proof}
We have to show that $n\cat{-GSet}$ is an infinitely distributive category,
and that the functor part of $P$ preserves small coproducts.  The first
assertion is true since $n\cat{-GSet}$ is a presheaf category.  The second
is a consequence of the following facts: the free strict $n$-category functor
$T_n$ preserves coproducts~\cite[Section F.2]{lei8}, there exists a cartesian
natural transformation $P \Tra T_n$, and coproducts in a presheaf category are
stable under pullback.
\end{proof}

For each $n \geq 0$ we have a morphism of monads
\[
(\tr_n, \tau_n)\: (\cat{GSet}, \Tom) \lra (n\cat{-GSet}, T_n)
\]
and moreover it is weak (see proof of Theorem~\ref{fourpointsixteen}).  So by the remarks above, any globular
operad $P$ gives rise to an $n$-operad $\tr_n P$ and a weak morphism of monads
\begin{equation}
\label{eq:P-trunc}
(\tr_n, \widetilde{\tau_n}):
(\cat{GSet}, P) \lra (n\cat{-GSet}, \tr_n P).
\end{equation}
Similarly, for each $n \geq 1$ we have the weak morphism of monads 
\[
(U_n, \gamma_n):
(n\cat{-GSet}, T_n) \lra \big((n - 1)\cat{-GSet}, T_{n - 1}\big)
\]
so any $n$-operad $Q$ gives rise to an
$(n - 1)$-operad $U_n Q$ and a weak morphism of monads
\begin{equation}
\label{eq:Q-trunc}
(U_n, \widetilde{\gamma_n}):
(n\cat{-GSet}, Q) \lra \big((n - 1)\cat{-GSet}, U_n Q\big).
\end{equation}

We now come to the definitions of weak $\omg$-category, weak $n$-category,
and incoherent $n$-category.  The strategy for the first, due to
Batanin~\cite{bat1}, is to define a weak $\omg$-category as an algebra for a
certain globular operad.  The operad we use is different from Batanin's;
see~\cite{lei7}.  To define it, we use the concept of ``contraction'' (which
also has a different meaning here from in Batanin's work).

By \demph{collection} and \demph{$n$-collection}, we mean $\Tom$- and
$T_n$-collection, respectively.

\begin{mydefinition} 
\label{defn:contraction}

Let 
\[
\psset{unit=0.1cm,labelsep=2pt,nodesep=2pt}
\pspicture(5,17)

\rput(0,15){\rnode{a1}{$A$}}  
\rput(0,2){\rnode{a2}{$T_\omg 1$}}  

\ncline{->}{a1}{a2} \naput{{\scriptsize $p$}} 

\endpspicture
\]
be a collection.  For $m \in \mathbb{N}$, a pair $a, b$ of $m$-cells in $A$ is
\demph{parallel} if \emph{either} $m = 0$ \emph{or} $m \geq 1$, $s(a) = s(b)$ and $t(a) = t(b)$.  A \demph{contraction} on the collection $p$ is a function
assigning to each
\begin{itemize}
 \item $m \in \mathbb{N}$, 
\item parallel pair $a, b \in A_m$, and 
\item $(m + 1)$-cell $y\: p(a) \lra p(b)$ in $\Tom 1$,
\end{itemize}
an $(m + 1)$-cell $x\: a \lra b$ in $A$ such that $p(x) = y$.

A (globular) \demph{operad with contraction} is a globular operad equipped
with a contraction on its underlying collection.

\end{mydefinition}

(This definition can also be phrased in terms of a right lifting property;
we leave this as an exercise for the interested reader.)

There is a category \cat{OWC} of operads with contraction, whose morphisms are
the morphisms of operads preserving those contractions.  This category has an
initial object~\cite[Proposition 9.2.2]{lei8}, whose underlying operad we call
$\Pom$.

\begin{mydefinition} 

A \demph{weak $\omg$-category} is an algebra for $\Pom$, the initial operad
with contraction.  We write $\omg\cat{-Cat}$ for the category
$\cat{GSet}^{\Pom}$ of weak $\omg$-categories and strict $\omg$-functors.

\end{mydefinition}

We now turn to finite-dimensional structures.  In the definition of weak $n$-category, the notion of contraction is modified at the top dimension to ensure coherence.  For incoherent $n$-categories, this is exactly the part of the definition that we eliminate.  Thus the following definition is in essence just a straightforward truncation of the definition of contraction given above; the ``incoherence'' is relative to what would be necessary if we were defining weak $n$-categories.  

\begin{mydefinition} 

An \demph{incoherent contraction} on an $n$-collection
\[
\psset{unit=0.1cm,labelsep=2pt,nodesep=2pt}
\pspicture(5,17)

\rput(0,15){\rnode{a1}{$A$}}  
\rput(0,2){\rnode{a2}{$T_n1$}}  

\ncline{->}{a1}{a2} \naput{{\scriptsize $p$}} 

\endpspicture
\]
is defined just as contractions were defined in
Definition~\ref{defn:contraction}, but replacing $\Tom$ by $T_n$ and ``$m \in
\mathbb{N}$'' by ``$m \in \{0, \ldots, n - 1\}$'' throughout.  The category
$n\cat{-iOWC}$ of \demph{$n$-operads with incoherent contraction} is defined
similarly.

\end{mydefinition}

The category $n\cat{-iOWC}$ has an initial object $\iP{n}$, as we shall see.

\begin{mydefinition} 

An \demph{incoherent $n$-category} is an algebra for $\iP{n}$, the
initial $n$-operad with incoherent contraction.  We write $n\cat{-iCat}$ for
the category $n\cat{-GSet}^{\iP{n}}$ of incoherent $n$-categories.  

\end{mydefinition}

While incoherent $n$-categories are weak structures, the morphisms in the
category $n\cat{-iCat}$ should be thought of as \emph{strict} $n$-functors.

\begin{remark}
We will not need (coherent) weak $n$-categories in what follows, but it may
clarify matters to consider them briefly now.  An incoherent contraction on an
$n$-collection $A$ is a \demph{contraction} if for any parallel $n$-cells $x,
x'$ in $A$, 
\[
p(x) = p(x') \ \ \Rightarrow\  \  x = x'.
\]
So the ``incoherence'' of an incoherent contraction lies in the fact that
given a parallel pair $a, b$ of $(n - 1)$-cells in $A$ and an $n$-cell $y:
p(a) \lra p(b)$ in $T_n 1$, there may be \emph{many} $n$-cells $x\: a \lra b$
lifting $y$.  

The definition of weak $n$-category is given as for incoherent weak
$n$-category, but with contractions in the place of incoherent
contractions.  That is, the category of $n$-operads with contraction has an
initial object $P_n$, and a {weak $n$-category} is defined as a
$P_n$-algebra.  The operad $P_n$ can be viewed as $\iP{n}$ with some of its
$n$-dimensional operations identified, so a weak $n$-category can be viewed
as an incoherent $n$-category satisfying some $n$-dimensional equations.
The significance of this difference is illustrated in
Example~\ref{eg:2-trunc}. 
\end{remark}

Returning to the main development, let us consider how the theory of
(incoherent) $n$-categories varies as $n$ varies in $\mathbb{N} \cup
\{\omg\}$.  Truncation defines, for each $n$, functors
\[
\cat{OWC} \map{\tr_n} n\cat{-iOWC},
\qquad
n\cat{-iOWC} \map{U_n} (n - 1)\cat{-iOWC}.
\]
So we have, for each $n$, an operad $\tr_n \Pom$ with incoherent contraction.

\begin{proposition} 
\label{prop:initial-iopd}

For each $n \geq 0$, $\tr_n \Pom$ is an initial object of $n\cat{-iOWC}$. 

\end{proposition}

\begin{proof}
This follows from Proposition~9.3.7 of~\cite{lei8} (where incoherent
contractions are called precontractions).
\end{proof}

Hence $\iP{n} = \tr_n\Pom$.  We then have an $(n - 1)$-operad $U_n \iP{n}$
with incoherent contraction.  

\begin{corollary} 
\label{cor:trunc-iopd}

For each $n \geq 1$, we have $U_n \iP{n} = \iP{n - 1}$.

\end{corollary}

\begin{proof}
$U_n \iP{n} = U_n \tr_n \Pom = \tr_{n - 1} \Pom = \iP{n - 1}$.
\end{proof}

It can also be shown that $U_n P_n = \iP{n - 1}$: the theory of weak
$n$-categories truncated to dimension $(n - 1)$ is the theory of incoherent
$(n - 1)$-categories \cite[Corollary~9.3.10]{lei8}.  Hence $\tr_n \Pom =
U_{n + 1} P_{n + 1}$, and, more generally, $\tr_n \Pom = U_{n + 1} U_{n + 2}
\cdots U_k P_k$ for any $k > n$: the theory of weak $\omg$-categories
truncated to dimension $n$ is the same as the theory of weak $k$-categories
truncated to dimension $n$.  

We now exhibit $\omg\cat{-Cat}$ as the limit of the categories
$n\cat{-iCat}$.  For each $n \geq 0$, we have a weak morphism of monads
\[
(\tr_n, \widetilde{\tau_n}):
(\cat{GSet}, \Pom) \lra (n\cat{-GSet}, \tr_n \Pom = \iP{n})
\]
by~(\ref{eq:P-trunc}).  We also have, for each $n \geq 1$, a weak morphism
of monads
\[
(U_n, \widetilde{\gamma_n}):
(n\cat{-GSet}, \iP{n}) \lra \big((n - 1)\cat{-GSet}, U_n \iP{n} = \iP{n - 1}\big)
\]
by~(\ref{eq:Q-trunc}).  Together these form a cone
\begin{equation}
\label{eq:big-opd-cone}
\psset{unit=0.093cm,labelsep=2pt,nodesep=2pt}
\pspicture(0,-5)(122,24)


\rput(20,20){\rnode{a}{$(\cat{GSet},P_\omg)$}}

\rput(0,0){$\cdots$}

\rput(0,0){\rnode{x0}{\white{$\DG^3\1$}}}
\rput(22,0){\rnode{x1}{$(\cat{$n$-GSet}, \iP{n})$}}
\rput(60,0){\rnode{x2}{$\big(\cat{$(n-1)$-GSet}, \iP{n-1}\big)$}}
\rput(92,0){\rnode{x3}{$\cdots$}}
\rput(115,0){\rnode{x4}{$(\cat{$0$-GSet}, \iP{0})$}}

\psset{nodesep=2pt}
\ncline{->}{x0}{x1} \naput{{\scriptsize $$}}
\ncline{->}{x1}{x2} \naput{{\scriptsize $$}}
\ncline[nodesepB=8pt]{->}{x2}{x3} \naput{{\scriptsize $$}}
\ncline[nodesepA=8pt]{->}{x3}{x4} \naput{{\scriptsize $$}}

\ncline{->}{a}{x0} \naput{{\scriptsize $$}}
\ncline{->}{a}{x1} \naput{{\scriptsize $$}}
\ncline{->}{a}{x2} \naput{{\scriptsize $$}}
\ncline{->}{a}{x4} \naput{{\scriptsize $$}}

\endpspicture
\end{equation}
in $\MND$.  In fact it is a cone in $\MNDd$, by Lemma~\ref{lemma:opd-dist}.
Applying the functor $\Alg$ gives a cone
\begin{equation}
\label{eq:big-weak-cone}
\psset{unit=0.093cm,labelsep=2pt,nodesep=3pt}
\pspicture(0,-5)(122,24)


\rput(20,20){\rnode{a}{\cat{$\omg$-Cat}}}

\rput(0,0){$\cdots$}

\rput(0,0){\rnode{x0}{\white{$\DG^3\1$}}}
\rput(22,0){\rnode{x1}{$\cat{$n$-iCat}$}}
\rput(60,0){\rnode{x2}{$\cat{$(n-1)$-iCat}$}}
\rput(92,0){\rnode{x3}{$\cdots$}}
\rput(115,0){\rnode{x4}{$\cat{$0$-iCat}$}}

\psset{nodesep=2pt}
\ncline{->}{x0}{x1} \naput{{\scriptsize $$}}
\ncline{->}{x1}{x2} \naput{{\scriptsize $$}}
\ncline[nodesepB=10pt]{->}{x2}{x3} \naput{{\scriptsize $$}}
\ncline[nodesepA=10pt]{->}{x3}{x4} \naput{{\scriptsize $$}}

\ncline{->}{a}{x0} \naput{{\scriptsize $$}}
\ncline{->}{a}{x1} \naput{{\scriptsize $$}}
\ncline{->}{a}{x2} \naput{{\scriptsize $$}}
\ncline{->}{a}{x4} \naput{{\scriptsize $$}}

\endpspicture
\end{equation}
in $\CATd$. 

We now prove the main results of this section.

\begin{theorem} 
\label{thm:weak-lim-cone}

The cone~(\ref{eq:big-opd-cone}) is a limit cone in both $\MND$ and $\MNDd$.

\end{theorem}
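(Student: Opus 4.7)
My plan is to apply Proposition~\ref{twopointseven} from the appendix, mirroring the template used to prove Theorem~\ref{fourpointsixteen}. That proposition reduces the task of verifying a limit cone in $\MNDd$ (or $\MND$) to three independent checks: the underlying cone is a limit in $\CAT$, it lies in $\CATd$, and every monad morphism in the cone is weak rather than merely lax.

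First I would identify the underlying cone of~(\ref{eq:big-opd-cone}) with
\[
\cat{GSet} \map{\tr_n} n\cat{-GSet}
\]
and exhibit it as a limit cone in $\CAT$. This is immediate from the definition of globular sets as a limit of truncated globular sets, or equivalently by transporting Theorem~\ref{threepointten} along the equivalences of Propositions~\ref{prop:ngph-ngset} and~\ref{prop:gph-gset}. Next, each $n\cat{-GSet}$ is a presheaf category and hence infinitely distributive, while the truncations---being restriction of presheaves along the inclusion of finite globe categories---preserve finite products and small coproducts, so the cone lies in $\CATd$.

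For the third check, both families of monad morphisms in~(\ref{eq:big-opd-cone}) are built via the operad-level constructions~(\ref{eq:P-trunc}) and~(\ref{eq:Q-trunc}) out of the weak morphisms $(\tr_n, \tau_n)$ and $(U_n, \gamma_n)$ from Section~\ref{four}. Since these constructions leave the underlying functor untouched and send an invertible 2-cell of the input to an invertible 2-cell of the output, weakness is inherited. Once these three checks are in hand, Proposition~\ref{twopointseven} delivers the conclusion in both $\MNDd$ and $\MND$.

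I expect the only delicate point to be the bookkeeping around the last step: tracking that the operad-level morphisms inherit weakness from the strict-case morphisms $(\tr_n, \tau_n)$ and $(U_n, \gamma_n)$, and confirming that the operads $P_\omega$ and $\iP{n}$ themselves satisfy the coproduct-preservation hypothesis needed for membership in $\MNDd$. The former was already essentially recorded during the proof of Theorem~\ref{fourpointsixteen}, and the latter is handled by Lemma~\ref{lemma:opd-dist}, so no substantial new calculation should be required.
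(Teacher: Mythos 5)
Your proposal is correct and follows essentially the same route as the paper: the paper's proof likewise observes that all arrows in the cone are weak morphisms of monads, that the cone lies in $\MNDd$ by Lemma~\ref{lemma:opd-dist}, and that the underlying cone in $\CAT$ is a limit, then invokes Proposition~\ref{twopointseven}. The only caution is your parenthetical suggestion of ``transporting'' the limit along the equivalences of Propositions~\ref{prop:ngph-ngset} and~\ref{prop:gph-gset} --- equivalences do not transport strict $1$-categorical limits, so you should rely on your first justification (that $\cat{GSet}$ is directly the limit of the $n\cat{-GSet}$), which is what the paper implicitly does.
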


That is, the monad for weak $\omg$-categories is the limit of the monads for
incoherent $n$-categories.  This is a precise expression of the idea that the
theory of weak $\omg$-categories is built up dimension by dimension from the
theories of incoherent $n$-categories.

\begin{proof}
All the arrows in this cone are weak morphisms of monads, and the underlying
cone in $\CAT$ is a limit cone.  So by Proposition~\ref{prop:reflection}, the
cone~(\ref{eq:big-opd-cone}) is a limit in $\MND$ and $\MNDd$.
\end{proof}

\begin{theorem} 
\label{thm:weak-cat-tc}

The cone~(\ref{eq:big-weak-cone}) is a limit cone in $\CAT$, $\CATp$ and
$\CATd$. 

\end{theorem}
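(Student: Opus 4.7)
The plan is to deduce this from Theorem~\ref{thm:weak-lim-cone} by pushing the limit through the functor $\Alg$. Observe that the cone~(\ref{eq:big-weak-cone}) is precisely the image of the cone~(\ref{eq:big-opd-cone}) under $\Alg$: by definition $\omega\cat{-Cat} = \cat{GSet}^{\Pom} = \Alg(\Pom)$ and $n\cat{-iCat} = n\cat{-GSet}^{\iP{n}} = \Alg(\iP{n})$, and the truncation functors between them are exactly those induced on algebra categories by the monad morphisms $(\tr_n,\widetilde{\tau_n})$ and $(U_n,\widetilde{\gamma_n})$.

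First I would handle the limit in $\CAT$. By Proposition~\ref{prop:FTM-adjns}, $\Alg\: \MND \lra \CAT$ is a right adjoint, hence preserves all limits. Theorem~\ref{thm:weak-lim-cone} tells us (\ref{eq:big-opd-cone}) is a limit in $\MND$, so applying $\Alg$ yields a limit cone in $\CAT$, which is exactly (\ref{eq:big-weak-cone}).

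Next I would obtain the limit in $\CATd$ by the analogous argument one level higher: by Proposition~\ref{prop:FTM-dist}, the restricted functor $\Alg\: \MNDd \lra \CATd$ is also a right adjoint, hence limit-preserving. Theorem~\ref{thm:weak-lim-cone} says (\ref{eq:big-opd-cone}) is also a limit in $\MNDd$, so its image under the restricted $\Alg$ is a limit cone in $\CATd$. (To be sure the image actually lies in $\CATd$, we use Lemma~\ref{lemma:opd-dist} to know the monads involved are objects of $\MNDd$ in the first place, and then that $\Alg$ sends $\MNDd$ into $\CATd$.)

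Finally, the limit in $\CATp$ follows formally: since $\CATd \subseteq \CATp \subseteq \CAT$ and the inclusions reflect limits (Proposition~\ref{twopointeight} from the appendix), a cone that is simultaneously a cone in $\CATp$ and a limit in $\CAT$ is automatically a limit in $\CATp$. The cone in question is in $\CATd$, hence in $\CATp$, so we are done. I do not expect a serious obstacle here; the only nontrivial point is bookkeeping around the appendix results on reflection of limits by the inclusions $\CATd \hra \CATp \hra \CAT$, which are exactly the same ones that powered the proof of Theorem~\ref{thm:strict-cat-tc}.
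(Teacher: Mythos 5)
Your proof is correct and follows essentially the same route as the paper: apply the limit-preserving right adjoint $\Alg$ to the limit cone of Theorem~\ref{thm:weak-lim-cone} to obtain the limit in $\CAT$, then use reflection of limits by the inclusions (Proposition~\ref{twopointeight}). The only (harmless) divergence is the $\CATd$ case: the paper deduces it by reflection along $\CATd \hookrightarrow \CAT$, whereas you apply the restricted right adjoint $\Alg\colon \MNDd \lra \CATd$ of Proposition~\ref{prop:FTM-dist} to the $\MNDd$-limit; both are valid one-line arguments.
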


That is, the category of weak $\omg$-categories is the limit of the
categories of incoherent $n$-categories.  

\begin{proof}
By Proposition~\ref{prop:FTM-adjns}, the functor $\Alg\: \MND \lra \CAT$
preserves limits, so Theorem~\ref{thm:weak-lim-cone} implies
that~(\ref{eq:big-weak-cone}) is a limit cone in $\CAT$.  Then 
by Proposition~\ref{prop:little-refl}, it is also a limit cone in $\CATp$ and
$\CATd$. 
\end{proof}

In the remaining sections we show how to copy this construction to
produce a definition of weak $\omg$-category for Trimble's theory.  As noted in the introduction, a similar point of view on
Trimble weak $n$-categories (for finite $n$) has been developed
independently in work of Weber \cite{web2}.


\section{Trimble $\omg$-categories: underlying data}\label{sec:trimback}\label{trimblegph}

We now embark on our analysis of Trimble's weak $n$-categories.  The constructions in the next three sections resemble those in Sections \ref{strictgph}, \ref{strictcat} and \ref{strictmnd} but with the functor from \Top\ built in as discussed in the introduction.  We begin by incorporating this data into the ``enriched graph'' endofunctor, before proceeding to study the ``weakly enriched category'' endofunctor in the next section, and finally the monad version in Section~\ref{trimblemnd}.  

Recall that previously we used the endofunctor 
\[\FG \: \cV \mapsto \cV\cat{-Gph}.\]
Now we need an endofunctor of the form
\[(\Top \lra \cV) \hh{1em} \lthickmapsto \hh{1em} (\Top \lra \cV\cat{-Gph}).\]
This motivates us to work in the slice category $\Top/\CAT$.  We now proceed to define the endofunctor we need, and show that its terminal coalgebra is the underlying $\omg$-graph functor
\[\Top \tmap{\Pi_\omg} \omg\cat{-Gph}.\]

Let $\cat{Top}$ be the category of all topological spaces and continuous
maps.  The crucial property of $\cat{Top}$ here is that it admits the following canonical functor; this is an unbased analogue of the loop space functor
$\Omega$.

\begin{definition}\label{newsevenpointone}
There is a canonical functor
\[
\Gamma\: \cat{Top} \lra \cat{Top-Gph}
\]
defined as follows.  Given a space $X$, the $\cat{Top}$-graph $\Gamma X$ is given by:
\begin{itemize}
\item $\cat{ob}(\Gamma X)$ is the set of points of $X$
\item for points $x, y$ of $X$, the space $(\Gamma X)(x, y)$ is the path space
$X(x, y)$.
\end{itemize}
Note that paths are always parametrised
by $[0, 1]$ so that $X(x, y)$ is naturally a subspace of $X^{[0, 1]}$.
\end{definition}

We are now ready to define our endofunctor.

\begin{definition}\label{DG}

We define an endofunctor 
\[\DG \: \Top/\CAT \ltra \Top/\CAT\]
by
\[
\DG
\left( \cat{Top} \tmap{\Pi} \cl{V} \right)
=
\left( \cat{Top} \tmap{\Pi^+} \cl{V}\cat{-Gph} \right)
\]
where $\Pi^+$ is the composite functor
\[
\cat{Top} \map{\Gamma} \cat{Top-Gph} \map{\Pi_*} \cl{V}\cat{-Gph}.
\]
Explicitly, given a space $X$, the graph $\Pi^+ X$ has
\begin{itemize}
\item objects the set of points of $X$, and 
\item for points $x$ and $y$,  $(\Pi^+ X)(x, y) =
\Pi\big(X(x, y)\big)$  
\end{itemize}
On morphisms, $\DG$ is defined by
\[
\psset{unit=0.1cm,labelsep=2pt,nodesep=2pt}
\pspicture(0,5)(85,30)

\rput(3,20){\pspicture(20,20)
\rput(0,10){\rnode{a1}{$\Top$}}  
\rput(17,17){\rnode{a2}{$\cV$}}  
\rput(17,3){\rnode{a3}{$\cV'$}}  

\ncline{->}{a1}{a2} \naput[npos=0.55]{{\scriptsize $\Pi$}} 
\ncline{->}{a1}{a3} \nbput[npos=0.55]{{\scriptsize $\Pi'$}} 
\ncline[labelsep=2pt]{->}{a2}{a3} \naput{{\scriptsize $H$}} 

\endpspicture}

\pcline[linewidth=1.1pt,tbarsize=2pt 2]{|->}(20,20)(30,20)

\rput(50,20){
\pspicture(20,20)

\rput(0,10){\rnode{a0}{$\Top$}}  

\rput(22,10){\rnode{a1}{$\cat{Top-Gph}$}}  
\rput(45,19){\rnode{a2}{\cat{$\cV$-Gph}}}  
\rput(45,1){\rnode{a3}{\cat{$\cV'$-Gph}}}  

\psset{nodesep=1pt}

\ncline{->}{a0}{a1} \naput[npos=0.5]{{\scriptsize $\Gamma$}} 
\ncline{->}{a1}{a2} \naput[npos=0.6]{{\scriptsize $\Pi_*$}} 
\ncline{->}{a1}{a3} \nbput[npos=0.6]{{\scriptsize $\Pi'_*$}} 
\ncline[labelsep=2pt]{->}{a2}{a3} \naput{{\scriptsize $H_*$}} 

\endpspicture}

\endpspicture
\]

\end{definition}

Next we define what might be called the ``incoherent'' or ``truncated'' fundamental $n$-graph functor---instead of taking homotopy classes at the top dimension we simply truncate.   This will be the $n$-dimensional part of our eventual terminal coalgebra, that is, the $n$th term in the sequential limit.  As before, we use the superscript ``i'' to indicate the incoherence. 

\begin{definition}
\label{defn:Piin-Trim}
For each $n \geq 0$, define a functor $\iPi{n}\: \cat{Top} \lra
n\cat{-Gph}$ by:
\begin{itemize}
\item $\iPi{0}\: \cat{Top} \lra 0\cat{-Gph} = \cat{Set}$ is the functor mapping
a space to its set of points

\item for $n \geq 1$,
\[
\psset{unit=0.1cm,labelsep=2pt,nodesep=2pt}
\left(\pspicture(-6,7.5)(6,18)

\rput(0,15){\rnode{a1}{$\Top$}}  
\rput(0,2){\rnode{a2}{$\cat{$n$-Gph}$}}  

\ncline{->}{a1}{a2} \naput{{\scriptsize $\iPi{n}$}} 

\endpspicture\right) = \DG
\left(\pspicture(-11,7.5)(11.5,18)

\rput(0,15){\rnode{a1}{$\Top$}}  
\rput(0,2){\rnode{a2}{$\cat{$(n-1)$-Gph}$}}  

\ncline{->}{a1}{a2} \naput{{\scriptsize $\iPi{n-1}$}} 

\endpspicture\right)
\]

\end{itemize}
Thus $\iPi{n}(X)$ is the $n$-graph consisting of the points of $X$, the paths, the homotopies between paths, the homotopies between those, and so on up to the $n$th dimension, where we do \emph{not} take homotopy classes; this corresponds to the $n$-globular set whose $k$-cells are continuous maps from the $k$-ball into $X$.

Furthermore, for each $n \geq 1$ we have the morphism
\[ U_n \: \left(\pspicture(-6,7.5)(6,18)

\rput(0,15){\rnode{a1}{$\Top$}}  
\rput(0,2){\rnode{a2}{$\cat{$n$-Gph}$}}  

\ncline[nodesep=2pt]{->}{a1}{a2} \naput{{\scriptsize $\iPi{n}$}} 

\endpspicture\right)
\tra
\left(\pspicture(-11,7.5)(11.5,18)

\rput(0,15){\rnode{a1}{$\Top$}}  
\rput(0,2){\rnode{a2}{$\cat{$(n-1)$-Gph}$}}  

\ncline[nodesep=2pt]{->}{a1}{a2} \naput{{\scriptsize $\iPi{n-1}$}} 

\endpspicture\right)
\]
in $\cat{Top}/\CAT$, where the underlying functor 
\[U_n\: n\cat{-Gph} \lra (n-1)\cat{-Gph}\] 
is the truncation functor as in the graph case (Definition~\ref{threepointthree}). 
\end{definition}

We wish to define the \omgadj-dimensional version of $\iPi{n}$ as a limit
of the $n$-dimensional versions.  The following result tells us we can take
limits in the slice category $\Top/\CAT$ as we expect.

\begin{proposition}{\cite[Exercise V.1.1]{mac1}}
\label{prop:coslice-creates}\label{onepointten}  
Let $\cl{C}$ be a category and $C \in \cl{C}$.  Then the forgetful functor
$C/\cl{C} \lra \cl{C}$ creates limits.  

\end{proposition}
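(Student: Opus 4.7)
The plan is to prove this by direct construction: given a small diagram in $C/\cl{C}$ whose underlying diagram in $\cl{C}$ admits a limit, I will exhibit a unique lift of that limit to $C/\cl{C}$ and verify its universal property. Throughout I will treat $C/\cl{C}$ as the co-slice (objects under $C$), consistent with the paper's usage of $\Top/\CAT$ as the category of functors out of $\Top$.

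First I would unwind the data. A diagram $D\: J \lra C/\cl{C}$ assigns to each $j$ an arrow $f_j\: C \lra X_j$, and to each $\alpha\: j \lra j'$ a map $X_j \lra X_{j'}$ making the evident triangle under $C$ commute. Suppose the underlying diagram $UD\: j \mapsto X_j$ has limit $(L, (\pi_j)_{j \in J})$ in $\cl{C}$. The commuting triangles express exactly that the family $(f_j)$ is a cone over $UD$, so the universal property of $L$ yields a unique morphism $f\: C \lra L$ with $\pi_j \circ f = f_j$ for every $j$. The candidate limit in $C/\cl{C}$ is then the object $(C \map{f} L)$, with each $\pi_j$ regarded as a morphism $(C,f) \lra (C, f_j)$ (this is valid precisely by the defining property of $f$).

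Next I would check the universal property. Given any cone over $D$ in $C/\cl{C}$ with apex $(C \map{g} Y)$ and legs $\phi_j\: Y \lra X_j$ satisfying $\phi_j \circ g = f_j$, the family $(\phi_j)$ is a cone over $UD$ in $\cl{C}$, so there is a unique $h\: Y \lra L$ with $\pi_j \circ h = \phi_j$. To show $h$ is a morphism in $C/\cl{C}$, compute
\[
\pi_j \circ (h \circ g) = \phi_j \circ g = f_j = \pi_j \circ f
\]
for every $j$, and then invoke uniqueness in the universal property of $L$ to conclude $h \circ g = f$. Uniqueness of $h$ as a morphism in $C/\cl{C}$ is inherited from uniqueness in $\cl{C}$. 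For the creation clause proper, any lift of $(L,(\pi_j))$ to $C/\cl{C}$ must equip $L$ with a structure map $f'\: C \lra L$ for which $\pi_j \circ f' = f_j$, and the universal property forces $f' = f$, giving uniqueness of the lift.

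There is no genuine obstacle here; this is a routine exercise in the universal property. The only subtlety worth flagging is the slice/co-slice distinction: the forgetful functor from the slice $\cl{C}/C$ creates only connected limits, whereas the co-slice $C/\cl{C}$ used throughout the paper creates \emph{all} limits, which is what is needed so that the later limit constructions in $\Top/\CAT$ reduce to limits in $\CAT$.
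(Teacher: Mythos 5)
Your proof is correct and is exactly the routine verification that the cited exercise (Mac Lane, Exercise V.1.1) intends; the paper itself gives no argument beyond that citation, so there is nothing to compare against. Your closing remark distinguishing the co-slice $C/\cl{C}$ (which creates all limits) from the slice $\cl{C}/C$ (which creates only connected limits) is also accurate and is indeed the relevant subtlety for the later limit computations in $\Top/\CAT$.
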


\begin{corollary}
The diagram
\[
\cdots 
\map{U_2}
\left(\pspicture(-6,7.5)(6,18)

\rput(0,15){\rnode{a1}{$\Top$}}  
\rput(0,2){\rnode{a2}{$\cat{$1$-Gph}$}}  

\ncline[nodesep=2pt]{->}{a1}{a2} \naput{{\scriptsize $\iPi{1}$}} 

\endpspicture\right)
\map{U_1}
\left(\pspicture(-6,7.5)(6,18)

\rput(0,15){\rnode{a1}{$\Top$}}  
\rput(0,2){\rnode{a2}{$\cat{$0$-Gph}$}}  

\ncline[nodesep=2pt]{->}{a1}{a2} \naput{{\scriptsize $\iPi{0}$}} 

\endpspicture\right)
\]
has a limit in $\cat{Top}/\CAT$ of the form
\begin{equation}
\label{eq:Pi-graph}
\psset{unit=0.1cm,labelsep=2pt,nodesep=2pt}
\pspicture(5,17)

\rput(0,15){\rnode{a1}{$\Top$}}  
\rput(0,2){\rnode{a2}{$\cat{$\omg$-Gph}$}}  

\ncline{->}{a1}{a2} \naput{{\scriptsize $\Pi_\omg$}} 

\endpspicture
\end{equation}
\end{corollary}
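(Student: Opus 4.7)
The plan is to apply Proposition~\ref{prop:coslice-creates} directly, with $\cl{C} = \CAT$ and $C = \Top$, so that the forgetful functor $\Top/\CAT \lra \CAT$ creates limits. This reduces the problem to computing the limit of the image diagram in $\CAT$ and then lifting it back canonically to $\Top/\CAT$.

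First I would apply the forgetful functor $\Top/\CAT \lra \CAT$ to our diagram; since the morphisms $U_n$ in $\Top/\CAT$ have underlying functors which are precisely the truncation functors $U_n \: n\cat{-Gph} \lra (n-1)\cat{-Gph}$ of Definition~\ref{defn:nGph}, we obtain the diagram
\[
\cdots \map{U_3} 2\cat{-Gph} \map{U_2} 1\cat{-Gph} \map{U_1} 0\cat{-Gph}
\]
in $\CAT$. By Definition~\ref{omegagraphdef}, this diagram has a limit in $\CAT$, namely $\omega\cat{-Gph}$, with projections $\tr_n \: \omega\cat{-Gph} \lra n\cat{-Gph}$.

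Next I would invoke the creation of limits: since the forgetful functor creates limits, there is a unique object of $\Top/\CAT$ lying over $\omega\cat{-Gph}$ which makes the cone in $\Top/\CAT$ a limit. Concretely, the compatibility $U_n \circ \iPi{n} = \iPi{n-1}$ (which is exactly what it means for $U_n$ to be a morphism $\iPi{n} \lra \iPi{n-1}$ in $\Top/\CAT$) says that the functors $(\iPi{n})_{n \geq 0}$ assemble into a cone over the diagram with apex $\Top$. By the universal property of $\omega\cat{-Gph}$ in $\CAT$, this cone factors uniquely through a functor $\Pi_\omega \: \Top \lra \omega\cat{-Gph}$ satisfying $\tr_n \circ \Pi_\omega = \iPi{n}$ for every $n \geq 0$. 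This $\Pi_\omega$ is exactly the unique lift produced by creation, and the resulting cone of morphisms $\iPi{n}$ in $\Top/\CAT$ becomes a limit cone.

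There is essentially no obstacle here: the result is a routine application of the general fact about coslice categories, once one observes that the diagram in $\Top/\CAT$ projects to the defining diagram for $\omega\cat{-Gph}$ in $\CAT$. The only bookkeeping is to check that the $\iPi{n}$ do form a cone, which is immediate from their inductive definition via $\DG$ and the fact that $U_n = \DG(U_{n-1})$ at the level of the underlying morphisms.
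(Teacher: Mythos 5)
Your proposal is correct and is essentially the paper's own argument: the corollary is stated as an immediate consequence of Proposition~\ref{prop:coslice-creates}, with the underlying diagram in $\CAT$ being exactly the defining diagram for $\omega\cat{-Gph}$ (Definition~\ref{omegagraphdef}), so creation of limits by the forgetful functor $\Top/\CAT \lra \CAT$ yields the lifted limit cone with vertex $\Pi_\omega$ determined by $\tr_n \circ \Pi_\omega = \iPi{n}$. Nothing is missing.
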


\begin{definition}
We call $\Piom$ the \demph{fundamental $\omg$-graph} functor.  We write $\tr_n$ for
the $n$th projection of this limit, and call it \demph{truncation}.  

\end{definition}

\begin{remark}\label{sixprecise}
The commuting triangles 
\[\pspicture(20,20)
\rput(0,10){\rnode{a1}{$\Top$}}  
\rput(19,17){\rnode{a2}{$\cat{$\omg$-Gph}$}}  
\rput(19,3){\rnode{a3}{$\cat{$n$-Gph}$}}  

\ncline{->}{a1}{a2} \naput[npos=0.55]{{\scriptsize $\Piom$}} 
\ncline{->}{a1}{a3} \nbput[npos=0.55]{{\scriptsize $\iPi{n}$}} 
\ncline[labelsep=2pt]{->}{a2}{a3} \naput{{\scriptsize $\mbox{tr}_n$}} 

\endpspicture\]
tell us how $\Piom$ behaves, that is
\[\iPi{n}(X) = \tr_n(\Piom(X)).\]
\end{remark}

\begin{remark}
 Note that $\Gamma$ gives \Top\ the structure of an $\FG$-coalgebra, and $\Piom$ is then the unique map of coalgebras to the terminal coalgebra \cat{$\omg$-Gph}.
\end{remark}

We can now show that \Piom\ is a
terminal coalgebra as required.  We are going to use our previous analysis of $\FG$ to help us, by means of the following two results.  First we note that $\DG$ is a ``lift'' of $\FG$ in the following sense.

\begin{lemma} 
\label{lemma:DG-FG}

The square
\[
\psset{unit=0.1cm,labelsep=3pt,nodesep=3pt}
\pspicture(0,-5)(30,24)



\rput(0,20){\rnode{a1}{$\Top/\CAT$}}  
\rput(30,20){\rnode{a2}{$\Top/\CAT$}}  
\rput(0,0){\rnode{a3}{$\CAT$}}  
\rput(30,0){\rnode{a4}{$\CAT$}}  

\ncline{->}{a1}{a2} \naput{{\scriptsize $\DG$}} 
\ncline{->}{a3}{a4} \nbput{{\scriptsize $\FG$}} 
\ncline{->}{a1}{a3} \nbput{{\scriptsize $U$}} 
\ncline{->}{a2}{a4} \naput{{\scriptsize $U$}} 

\endpspicture
\]
commutes, where $U$ is the forgetful functor.
\proofbox

\end{lemma}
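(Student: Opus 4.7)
The plan is to verify commutativity of the square by unwinding definitions on both objects and morphisms; the claim is essentially that $\DG$ was designed so that, after forgetting the $\Pi$-component, it acts exactly as $\FG$ on the underlying enriching category.

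On objects, I would take an arbitrary $(\Pi\: \Top \lra \cV) \in \Top/\CAT$. The right-then-down route yields
\[
U\DG(\Pi\: \Top \lra \cV) \;=\; U(\Pi^+\: \Top \lra \cV\cat{-Gph}) \;=\; \cV\cat{-Gph},
\]
while the down-then-right route yields $\FG(U(\Pi\: \Top \lra \cV)) = \FG(\cV) = \cV\cat{-Gph}$, by Definition~\ref{FG}. The two expressions agree on the nose.

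On morphisms, let a morphism in $\Top/\CAT$ be a functor $H\: \cV \lra \cV'$ with $H\Pi = \Pi'$. Following Definition~\ref{DG}, $\DG$ sends this to the morphism whose underlying functor is $H_*\: \cV\cat{-Gph} \lra \cV'\cat{-Gph}$ (and whose new ``$\Pi$-component'' is $\Pi'^+ = H_* \circ \Pi^+$). Applying $U$ discards the $\Pi$-data and leaves $H_*$. The other route, $\FG \circ U$, sends $H$ first to $H$ itself and then to $\FG(H) = H_*$. Again the two agree.

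Since both routes agree on objects and on morphisms, the square commutes strictly. There is no obstacle here beyond bookkeeping; the lemma is really a record of the fact that the ``enrichment'' component of $\DG$ is literally $\FG$, with the functor from $\Top$ carried along as extra data that $U$ then forgets.
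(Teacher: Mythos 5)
Your proof is correct and matches the paper's intent exactly: the paper states this lemma with no written proof precisely because it follows immediately from unwinding Definitions~\ref{FG} and~\ref{DG}, which is what you do. The check that $\Pi'^+ = H_*\circ\Pi^+$ is a nice confirmation that $\DG$ is well defined on morphisms, though it is not needed for commutativity with $U$, which discards that data.
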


The following result will tell us that $\DG$ satisfies the hypotheses of Ad\'amek's Theorem, because $\FG$ does.

\begin{proposition} 
\label{prop:coslice-tc}

Let $\cl{C}$ be a category and $C \in \cl{C}$.  Let $D$ and $F$ be
endofunctors such that the square
\[
\psset{unit=0.1cm,labelsep=3pt,nodesep=3pt}
\pspicture(0,-4)(20,24)



\rput(0,20){\rnode{a1}{$C/\cC$}}  
\rput(20,20){\rnode{a2}{$C/\cC$}}  
\rput(0,0){\rnode{a3}{$\cC$}}  
\rput(20,0){\rnode{a4}{$\cC$}}  

\ncline{->}{a1}{a2} \naput{{\scriptsize $D$}} 
\ncline{->}{a3}{a4} \nbput{{\scriptsize $F$}} 
\ncline{->}{a1}{a3} \nbput{{\scriptsize $U$}} 
\ncline{->}{a2}{a4} \naput{{\scriptsize $U$}} 

\endpspicture
\]
commutes, where $U$ is the forgetful functor.  Suppose that $F$ satisfies the
hypotheses of Ad\'amek's Theorem~(\ref{adamek}).  Then so does $D$, and the
image under $U$ of the terminal $D$-coalgebra is the terminal $F$-coalgebra.  

\end{proposition}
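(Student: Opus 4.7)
The plan is to apply Proposition~\ref{prop:coslice-creates} (that $U\: C/\cC \lra \cC$ creates limits) together with the commuting square $U \circ D = F \circ U$, to transport Ad\'amek's construction from $\cC$ to $C/\cC$ step by step.

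First I would check that the input data for Ad\'amek's Theorem exists in $C/\cC$. The terminal object of $C/\cC$ is $\1_{C/\cC} = (C \map{!} 1)$ where $1$ is terminal in $\cC$, and $U(\1_{C/\cC}) = 1$. By the commuting square and a straightforward induction, $U(D^n \1_{C/\cC}) = F^n 1$ and $U(D^n !) = F^n !$, so the canonical sequential diagram defining the Ad\'amek construction for $D$ sits above the one for $F$. Since $F$ satisfies the hypotheses of Ad\'amek's theorem, the limit $L := \lim F^n 1$ exists in $\cC$; because $U$ creates limits, the diagram $(D^n \1_{C/\cC})_{n\geq 0}$ has a limit $\widetilde{L}$ in $C/\cC$ with $U(\widetilde{L}) = L$.

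Next I would verify that $D$ preserves this limit. Consider the canonical comparison morphism
\[
\varphi\: D(\widetilde{L}) \lra \lim_n D^{n+1} \1_{C/\cC}
\]
in $C/\cC$. Applying $U$ yields the canonical comparison map $F(L) \lra \lim_n F^{n+1} 1$ in $\cC$, which is an isomorphism because $F$ preserves the limit by hypothesis. Since $U$ creates limits, it reflects isomorphisms of comparison maps (applied to diagrams whose limit exists), so $\varphi$ is itself an isomorphism; equivalently, $D$ preserves the limit $\widetilde{L}$. Thus $D$ satisfies the hypotheses of Ad\'amek's Theorem, and $\widetilde{L}$ (equipped with the inverse of $\varphi$, regarded as a coalgebra structure map $\widetilde{L} \lra D\widetilde{L}$) is the terminal $D$-coalgebra.

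Finally, for the identification of underlying objects: the terminal $F$-coalgebra, by Ad\'amek's Theorem applied in $\cC$, is $L$ with structure map the inverse of the canonical comparison $F(L) \iso \lim F^{n+1} 1$. This is precisely $U$ applied to the coalgebra structure on $\widetilde{L}$ described above, so $U(\widetilde{L}) = L$ is the terminal $F$-coalgebra as claimed. The main delicate point is the preservation claim, and it is handled cleanly by the fact that creation of limits by $U$ gives both the existence of $\widetilde{L}$ and the reflection needed to transfer the preservation property of $F$ to $D$; no further calculation is required.
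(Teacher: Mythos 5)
Your proof is correct and is exactly the argument the paper has in mind: the paper's own proof is the one-line observation that the result ``follows from Ad\'amek's Theorem and $U$ creating limits (Proposition~\ref{prop:coslice-creates})'', and your write-up simply fills in the details (the sequential diagram for $D$ lies over the one for $F$, creation gives existence of the limit upstairs, and creation-implies-reflection of limit cones transfers the preservation hypothesis from $F$ to $D$). No gaps.
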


\begin{proof}
This follows from Ad\'amek's Theorem and $U$ creating limits
(Proposition~\ref{prop:coslice-creates}). 
\end{proof}

We are now ready to characterise the terminal coalgebra of $\DG$. Write $\1$ for the terminal object $(\cat{Top} \tmap{!} \1)$
of $\cat{Top}/\CAT$.

\begin{theorem} 

The category of $\omg$-graphs together with the fundamental $\omg$-graph
functor 
\[\psset{unit=0.1cm,labelsep=2pt,nodesep=2pt}
\pspicture(5,17)

\rput(0,15){\rnode{a1}{$\Top$}}  
\rput(0,2){\rnode{a2}{$\cat{$\omg$-Gph}$}}  

\ncline{->}{a1}{a2} \naput{{\scriptsize $\Pi_\omg$}} 

\endpspicture
\]
is the terminal coalgebra for $\DG$.  

\end{theorem}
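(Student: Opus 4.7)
The plan is to invoke Proposition~\ref{prop:coslice-tc} with $\cl{C} = \CAT$, $C = \Top$, $F = \FG$ and $D = \DG$. The commutativity hypothesis is exactly Lemma~\ref{lemma:DG-FG}, and the hypotheses of Ad\'amek's Theorem for $\FG$ are established in Theorem~\ref{thm:gph-tc}. Thus $\DG$ has a terminal coalgebra, constructed as the limit in $\Top/\CAT$ of the sequential diagram
\[
\cdots \lra \DG^3 \1 \lra \DG^2 \1 \lra \DG \1,
\]
and moreover Proposition~\ref{prop:coslice-creates} guarantees that this limit is created by the forgetful functor $U\: \Top/\CAT \lra \CAT$, so its image under $U$ is the terminal $\FG$-coalgebra $\omega\cat{-Gph}$. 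This already tells us that the terminal $\DG$-coalgebra has the form $(\Top \map{\Phi} \omega\cat{-Gph})$ for some functor $\Phi$; what remains is to identify $\Phi$ with $\Piom$.

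To do this I would construct, by induction on $n$, a canonical isomorphism
\[
\DG^{n+1}\1 \ \iso \ \left(\Top \map{\iPi{n}} n\cat{-Gph}\right)
\]
in $\Top/\CAT$, analogous to the isomorphisms $I_n$ used in Theorem~\ref{thm:gph-tc} for the $\FG$ case. For the base case $n=0$: unwinding the definition of $\DG$, we have $\DG\1 = (\Top \map{!^+} \1\cat{-Gph})$, and under the canonical isomorphism $\1\cat{-Gph} \iso \Set = 0\cat{-Gph}$, this functor becomes the points functor $\iPi{0}$. For the inductive step, apply $\DG$ to both sides and use the definition $\iPi{n} = \DG(\iPi{n-1})$ together with the fact that $\DG$ preserves the isomorphisms already constructed. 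One checks that these isomorphisms commute with the connecting maps $\DG^{n+1}(!)$ on one side and the truncations $U_n$ on the other, exactly as in the proof of Theorem~\ref{thm:gph-tc}.

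Having transported the diagram this way, the limit being computed in $\Top/\CAT$ is (up to canonical isomorphism) the very diagram whose limit defines $\Piom$ in~(\ref{eq:Pi-graph}). By uniqueness of limits, the terminal $\DG$-coalgebra is $(\Top \map{\Piom} \omega\cat{-Gph})$, as required.

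The only place where any real work happens is the inductive identification of $\DG^{n+1}\1$ with $\iPi{n}$, and even this is essentially routine once one notes that $\DG$ was defined by post-composition with the enriched-graph construction in a way that exactly parallels the inductive definition of $\iPi{n}$; the commutativity with the truncation maps is then automatic. Thus there is no serious obstacle — the theorem is a straightforward consequence of the strict $\omega$-graph case together with the general lifting result of Proposition~\ref{prop:coslice-tc}.
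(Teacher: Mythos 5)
Your proposal is correct and follows essentially the same route as the paper's own proof: both invoke Proposition~\ref{prop:coslice-tc} together with Lemma~\ref{lemma:DG-FG} to get the terminal $\DG$-coalgebra as the sequential limit, then identify $\DG^{n+1}\1$ with $\big(\Top \map{\iPi{n}} n\cat{-Gph}\big)$ via the same isomorphisms $I_n$ used in the $\FG$ case, and conclude using the definition of $\Piom$ as the limit of that diagram. No gaps.
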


\begin{proof}
By Proposition~\ref{prop:coslice-tc} and Lemma~\ref{lemma:DG-FG}, the terminal
coalgebra for $\DG$ exists and is constructed as in Ad\'amek's Theorem.  That is, we take the limit of the diagram
\[
\psset{unit=0.1cm,labelsep=2pt,nodesep=2pt}
\pspicture(0,-5)(80,5)


\rput(0,0){$\cdots$}

\rput(0,0){\rnode{x0}{\white{$\DG^3\1$}}}
\rput(22,0){\rnode{x1}{$\DG^3 \1$}}
\rput(42,0){\rnode{x2}{$\DG^2 \1$}}
\rput(62,0){\rnode{x3}{$\DG\1$}}
\rput(80,0){\rnode{x4}{$\1$}}

\psset{nodesep=2pt}
\ncline{->}{x0}{x1} \naput{{\scriptsize $\DG^3!$}}
\ncline{->}{x1}{x2} \naput{{\scriptsize $\DG^2!$}}
\ncline{->}{x2}{x3} \naput{{\scriptsize $\DG!$}}
\ncline{->}{x3}{x4} \naput{{\scriptsize $!$}}

\endpspicture
\]
But there are canonical isomorphisms $(I_n)_{n \geq 0}$ such that the following
diagram commutes: 
\[
\psset{unit=0.1cm,labelsep=2pt,nodesep=2pt}
\pspicture(100,44)


\rput(0,40){$\cdots$}
\rput(0,10){$\cdots$}

\rput(0,40){\rnode{x0}{\white{$\DG^3\1$}}}
\rput(30,40){\rnode{x1}{$\DG^3\1$}}
\rput(65,40){\rnode{x2}{$\DG^2\1$}}
\rput(100,40){\rnode{x3}{$\DG\1$}}

\psset{nodesep=4pt}
\ncline{->}{x0}{x1} \naput{{\scriptsize $\DG^3!$}}
\ncline{->}{x1}{x2} \naput{{\scriptsize $\DG^2!$}}
\ncline{->}{x2}{x3} \naput{{\scriptsize $\DG!$}}

\rput(3,10){\rnode{y0}{\white{$\DG^3\1$}}}

\rput(30,10){\rnode{y1}{
$\left(\pspicture(-6,7.5)(6,18)
\rput(0,15){\rnode{a1}{$\Top$}}  
\rput(0,2){\rnode{a2}{$\cat{$2$-Gph}$}}  
\ncline{->}{a1}{a2} \naput{{\scriptsize $\iPi{2}$}} 

\endpspicture\right)$}}

\rput(65,10){\rnode{y2}{
$\left(\pspicture(-6,7.5)(6,18)
\rput(0,15){\rnode{a1}{$\Top$}}  
\rput(0,2){\rnode{a2}{$\cat{$1$-Gph}$}}  
\ncline{->}{a1}{a2} \naput{{\scriptsize $\iPi{1}$}} 

\endpspicture\right)$}}

\rput(100,10){\rnode{y3}{
$\left(\pspicture(-6,7.5)(6,18)
\rput(0,15){\rnode{a1}{$\Top$}}  
\rput(0,2){\rnode{a2}{$\cat{$0$-Gph}$}}  
\ncline{->}{a1}{a2} \naput{{\scriptsize $\iPi{0}$}} 

\endpspicture\right)$}}

\psset{nodesep=2pt}
\ncline{->}{y0}{y1} \naput{{\scriptsize $U_3$}}
\ncline{->}{y1}{y2} \naput{{\scriptsize $U_2$}}
\ncline{->}{y2}{y3} \naput{{\scriptsize $U_1$}}

\psset{nodesepA=5pt,nodesepB=3pt}
\ncline{->}{x1}{y1} \naput{{\scriptsize $I_2$}} \nbput{{\scriptsize $\iso$}}
\ncline{->}{x2}{y2} \naput{{\scriptsize $I_1$}} \nbput{{\scriptsize $\iso$}}
\ncline{->}{x3}{y3} \naput{{\scriptsize $I_0$}} \nbput{{\scriptsize $\iso$}}

\endpspicture
\]
---the isomorphisms $I_n$ are the same as in the case of $\FG$ (Theorem~\ref{threepointten}).  But we have already shown that the limit of the lower part of the diagram is 
\[ \psset{unit=0.1cm,labelsep=2pt,nodesep=2pt}
\pspicture(5,17)

\rput(0,15){\rnode{a1}{$\Top$}}  
\rput(0,2){\rnode{a2}{$\cat{$\omg$-Gph}$}}  

\ncline{->}{a1}{a2} \naput{{\scriptsize $\Pi_\omg$}} 

\endpspicture
\]
thus it is the terminal coalgebra of $\DG$ as required.
\end{proof}


\section{The category of Trimble $\omg$-categories}\label{sec:trimcat}
\label{trimblecat}

In this section we define Trimble $n$- and $\omg$-categories, and begin the process of 
characterising the category of Trimble $\omg$-categories as a terminal
coalgebra.  The structure of this section follows that of
Section~\ref{sec:strictcat} (strict $\omg$-categories); as with the strict case we will study the monads in the next section, and deduce the result about terminal coalgebras from the more powerful result for monads.  The notation
$n\cat{-Cat}$ will now always refer to Trimble $n$-categories, and similarly
$\omg\cat{-Cat}$.  Operads will be non-symmetric classical operads.

First we recall the notion of ``weak enrichment'' used in the Trimble
theory; the idea is to enrich in $\cV$ in a way that is weakened by the
action of an operad $P$ (see \cite{che16}).

\begin{mydefinition}\label{sevenpointone} 
Let \cl{V} be a symmetric monoidal category and $P$ an operad in \cl{V}.  A
\demph{$(\cl{V},P)$-category} $A$ is given by 
\begin{itemize}
\item a \cl{V}-graph $A$, equipped with
\item for all $k \geq 0$ and $a_0, \ldots, a_k \in \cat{ob}A$ a composition
morphism
\[\gamma\: P(k) \otimes A(a_{k-1}, a_k) \otimes \cdots \otimes A(a_0, a_1)
\lra A(a_0, a_k)\] 
\end{itemize}
in \cl{V}, compatible with the composition and identities of the operad in
the usual way (as for algebras).  Morphisms are defined in the obvious way,
giving a category $(\cl{V},P)\cat{-Cat}$.
\end{mydefinition}

\begin{examples} The following degenerate examples show how this notion generalises both $\cV$-categories and $P$-algebras.
 
\begin{enumerate}
 \item If $P$ is the terminal operad then a $(\cl{V}, P)$-category is
exactly a $\cl{V}$-category.

\item A $(\cl{V},
P)$-category with only one object is exactly a $P$-algebra.

\end{enumerate}

\end{examples}

There is a particular topological operad $E$ for which $(\Top, E)$-categories are in plentiful supply; this is the operad used by Trimble in his original definition \cite{tri1,lei7}.  We will fix this operad and use it for the rest of the work.

\begin{mydefinition} 

We define the operad $E$ in \cat{Top} by setting
$E(k)$ to be the space of continuous endpoint-preserving maps 
    \[ [0,1] \lra [0,k] \]
for each $k \geq 0$.  The composition maps 
\[E(m) \times E(k_1) \times \cdots \times E(k_m) \lra E(k_1 + \cdots + k_m) \]
are given by reparametrisation and the unit is given by the identity map $[0,
1] \lra [0, 1]$ in $E(1)$.

\end{mydefinition}

\begin{example}\label{neweightpointfour}\label{sevenpointfour}
 Every topological space $X$ gives rise canonically to a $(\cat{Top},
E)$-category as follows.  

\begin{itemize}
\item Its underlying graph is $\Gamma X$, defined in Definition~\ref{newsevenpointone}.

\item Composition: given points $x_0, \ldots, x_k \in X$ we have a
canonical map
\[E(k) \times X(x_{k-1}, x_k) \times \cdots \times X(x_0, x_1) \lra X(x_0,
x_k)\] 
compatible with the operad composition: given an element of the domain space, we concatenate the $k$ paths and apply the reparametrisation specified by the element of $E(k)$, thus obtaining a single path.

\end{itemize}

By a slight abuse
of notation, we refer to the $(\cat{Top}, E)$-category as $\Gamma X$, too.

\end{example}

\begin{myremark} 

In all that follows, we use this operad $E$ as our starting point.  Note that other operads could be used; the only property of $E$ that is required to make the constructions work is that it ``acts on path spaces'' as in Example~\ref{sevenpointfour}.  In fact, $E$ is in a precise sense the \emph{universal} operad acting on path spaces \cite{che19}; a smaller but somewhat less elegant operad is exhibited in \cite{cg2}.  

Note, further, that for each $k \geq 0$, the space $E(k)$ is
contractible.  This is what will give \emph{coherence} for the $n$-categories
we define; but from a technical point of view the induction will not depend on
this property of $E$.

\end{myremark}

Our aim is to combine
\begin{itemize}
 \item the move from $\cV$-graphs to $\cV$-categories, as in the move from
   Section~\ref{strictgph} to Section~\ref{strictcat}, and 

\item the move from $\CAT$ to the slice $\Top/\CAT$, as in the move from
  Section~\ref{strictgph} to Section~\ref{trimblegph}. 
\end{itemize}
That is we wish define an endofunctor of the form
\[\big(\Top \lra \cV\big) \hh{1em} \lthickmapsto \hh{1em} \big(\Top \lra (\cV,P)\cat{-Cat}\big)\]
where $P$ is an operad suitably derived from the starting data.  As in Section~\ref{strictcat} we must restrict to categories with finite products, so now our base category is the slice
\[\Top/\CATp.\]

We now define the endofunctor for which the category of Trimble
$\omg$-categories will be the terminal coalgebra.  We proceed in an exactly
analogous manner to Section~\ref{sec:strict}, defining an endofunctor for
``enrichment'', except now instead of producing ordinary enriched categories,
we produce enriched categories weakened by the action of an operad.  We begin by giving the direct explicit definition, with a more abstract characterisation afterwards.

\begin{definition} 
\label{defn:DC}\label{DC}
We define an endofunctor 
\[\DC \: \cat{Top}/\CATp \ltra \cat{Top}/\CATp\]
by
\[
\DC \left( \cat{Top} \tmap{\Pi} \cl{V} \right)
\ = \ 
\left( \cat{Top} \tmap{\Pi^+} (\cl{V}, \Pi E)\cat{-Cat} \right)
\]
where $\Pi^+$ is given as follows.  For a space $X$, the $(\cl{V}, \Pi E)$-category $\Pi^+ X$ has as
its underlying $\cl{V}$-graph what was called $\Pi^+ X$ in
Section~\ref{sec:trimback}, and the action of the operad $\Pi E$ is given by

\[
\psset{unit=0.1cm,labelsep=2pt,nodesep=2pt}
\pspicture(40,44)


\rput(20,40){\rnode{x1}{$\Pi\big(E(k)\big)\times\Pi\big(X(x_{k-1},x_k)\big)\times
\cdots \times \Pi\big(X(x_{0},x_1)\big)$}}

\rput(20,22){\rnode{x2}{$\Pi\big(E(k) \times X(x_{k-1},x_k)\times \cdots
\times X(x_{0},x_1)\big)$}}

\rput(20,4){\rnode{x3}{$\Pi\big(X(x_0,x_k)\big).$}}

\psset{nodesep=2pt}
\ncline{->}{x1}{x2} \nbput{{\scriptsize $\iso$}} \naput[labelsep=10pt]{$\Pi$ preserves products}
\ncline{->}{x2}{x3} \naput[labelsep=10pt]{$\Pi$ of the action of $E$
on path spaces}

\endpspicture
\]
Note that here $\Pi E$ is the operad defined by $(\Pi E)(k) = \Pi\big((E(k)\big)$; this is an operad as $\Pi$ preserves products.  It is straightforward to show that if $\Pi$ preserves products then $\Pi^+$ also does.  
\end{definition}

We now give a more abstract description of this endofunctor, in the style of the previous section.  Recall that we defined $\DG(\Top \tmap{\Pi} \cV)$ as a composite
\[\Top \map{\Gamma} \Top\cat{-Gph} \map{\Pi_*} \cV\cat{-Gph}.\]
Similarly, we will express $\DC(\Top \tmap{\Pi} \cV)$ as
\[\Top \map{\Gamma} (\Top,E)\cat{-Cat} \map{\Pi_*} (\cV,\Pi E)\cat{-Cat}.\]
By a mild abuse of notation we will generalise the functors $\Gamma$ and $(\ )_*$ of the previous section to the case of categories, without changing the notation.  We begin with $\Gamma$.

\begin{lemma}\label{newsevenpointseven}
 The assignation $X \mapsto \Gamma X$ of Example~\ref{neweightpointfour} extends to a functor 
\[
\Gamma \: \cat{Top} \lra (\cat{Top}, E)\cat{-Cat},
\]
and the following triangle commutes
\begin{equation}
\label{eq:Gamma-fgt}
\psset{unit=0.1cm,labelsep=2pt,nodesep=3pt,npos=0.4}
\pspicture(20,22)

\rput(0,10){\rnode{a1}{$\Top$}}  
\rput(27,19){\rnode{a2}{$\cat{$(\Top,E)$-Cat}$}}  
\rput(27,1){\rnode{a3}{$\cat{Top-Gph}$}}  

\ncline{->}{a1}{a2} \naput[npos=0.5]{{\scriptsize $\Gamma$}} 
\ncline{->}{a1}{a3} \nbput[npos=0.5]{{\scriptsize $\Gamma$}} 
\ncline[labelsep=2pt]{->}{a2}{a3} \naput{{\scriptsize $$}} 

\endpspicture
\end{equation}
where the vertical arrow is the forgetful functor. 
\end{lemma}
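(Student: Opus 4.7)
The content splits cleanly into two tasks: defining $\Gamma$ on morphisms so that the values lie in $(\cat{Top},E)\cat{-Cat}$, and checking the triangle commutes. I would handle the triangle first, essentially for free: if I define $\Gamma$ on morphisms by the same rule used for the graph-level $\Gamma$ of Definition~\ref{newsevenpointone}, then commutativity of~(\ref{eq:Gamma-fgt}) is automatic, because the forgetful functor $(\cat{Top},E)\cat{-Cat} \lra \cat{Top}\cat{-Gph}$ is the identity on underlying graphs and morphisms of graphs.

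So given $f\:X \lra Y$ in $\cat{Top}$, I put $\Gamma f = f$ on objects, and on each hom-space I set $\Gamma f \: X(x,x') \lra Y(fx, fx')$ to be post-composition $\gamma \mapsto f \circ \gamma$. This is continuous by the standard exponential adjunction, and it is the same assignment already used to extend the graph-level $\Gamma$ of Definition~\ref{newsevenpointone} to a functor, so functoriality of $\Gamma$ on underlying data is already known.

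The remaining content is to verify that for each $f$ and each $k \geq 0$, the square
\[
\psset{unit=0.1cm,labelsep=2pt,nodesep=3pt}
\pspicture(0,-2)(90,24)
\rput(25,20){\rnode{a1}{$E(k) \times X(x_{k-1},x_k) \times \cdots \times X(x_0, x_1)$}}
\rput(80,20){\rnode{a2}{$X(x_0,x_k)$}}
\rput(25,0){\rnode{a3}{$E(k) \times Y(fx_{k-1},fx_k) \times \cdots \times Y(fx_0,fx_1)$}}
\rput(80,0){\rnode{a4}{$Y(fx_0,fx_k)$}}
\ncline{->}{a1}{a2}
\ncline{->}{a3}{a4}
\ncline{->}{a1}{a3} \nbput{{\scriptsize $1 \times \Gamma f \times \cdots \times \Gamma f$}}
\ncline{->}{a2}{a4} \naput{{\scriptsize $\Gamma f$}}
\endpspicture
\]
commutes, where the horizontal arrows are the composition maps of Example~\ref{neweightpointfour}. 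Since those arrows are built from concatenation of paths and reparametrisation by an element of $E(k)$, and both operations manifestly commute with post-composition by $f$, both routes around the square send $(e,\gamma_k,\ldots,\gamma_1)$ to $f \circ (\mathrm{concat}(\gamma_1,\ldots,\gamma_k) \circ e)$. Compatibility with operad multiplication and unit for $\Gamma X$ itself is already part of Example~\ref{neweightpointfour}, so nothing further is required there.

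The only place where one might expect an obstacle is the preservation of the operad action, but as just noted this reduces to the elementary observation that post-composition commutes with concatenation and reparametrisation. There is no serious technical difficulty; the lemma is essentially a packaging statement asserting that the construction of Example~\ref{neweightpointfour} is natural in $X$.
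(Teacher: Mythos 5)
Your proof is correct; the paper in fact states this lemma without proof, treating it as a routine verification, and your argument supplies exactly the expected details (post-composition on hom-spaces, commutativity of the triangle by construction, and preservation of the $E$-action because concatenation and reparametrisation commute with post-composition by $f$). Nothing is missing.
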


We now turn our attention to the functor $(\ )_*$. Straightforward calculations give the following.

\begin{lemma}
Consider $H \: \cl{V} \lra \cl{W}$ in $\CATp$ and an operad $P$ in
$\cl{V}$.  Then there are an induced operad $HP$ in $\cl{W}$ given by $(HP)(k) = H(P(k))$ and an induced
functor
\[
H_*\: (\cl{V}, P)\cat{-Cat} \lra (\cl{W}, HP)\cat{-Cat}.
\]
Furthermore $H_*$ preserves finite products, that is, lies in $\CATp$.  

\end{lemma}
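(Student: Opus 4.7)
The plan is to transport all the structure along $H$, using only that $H$ preserves finite products.

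First I would construct the induced operad $HP$ in $\cW$. Since $P$ is an operad in $\cV$, it comes with composition maps
\[
\mu_{k; n_1,\ldots,n_k}\: P(k) \times P(n_1) \times \cdots \times P(n_k) \lra P(n_1 + \cdots + n_k)
\]
and a unit $\eta\: 1 \lra P(1)$, satisfying the usual associativity and unit axioms (which are equational diagrams built from products). Apply $H$ to each $\mu$ and $\eta$; using the canonical isomorphisms
\[
H\bigl(P(k) \times P(n_1) \times \cdots \times P(n_k)\bigr) \iso HP(k) \times HP(n_1) \times \cdots \times HP(n_k)
\]
coming from product-preservation of $H$ (and $H(1) \iso 1$), we obtain composition maps and a unit for $HP$. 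The operad axioms for $HP$ follow by applying $H$ to the corresponding diagrams for $P$ and using naturality of the product-comparison isomorphisms.

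Next I would define $H_*$ on objects. Given a $(\cV, P)$-category $A$, put $\ob(H_* A) = \ob A$ and $(H_* A)(a,b) = H(A(a,b))$. The composition morphism required is
\[
HP(k) \times (H_* A)(a_{k-1},a_k) \times \cdots \times (H_* A)(a_0,a_1) \lra (H_* A)(a_0, a_k),
\]
which we obtain by applying $H$ to the composition morphism of $A$ and using the product-preservation iso as above. Compatibility with the operad composition and unit of $HP$ follows by applying $H$ to the corresponding axioms for $A$. On morphisms, a $(\cV,P)$-functor $f\: A \lra B$ is sent to the evident map $H_* f$ with object-part equal to that of $f$ and hom-part $H(f_{a,a'})$; compatibility with composition is again inherited by applying $H$.

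Finally, $H_*$ preserves finite products: the terminal $(\cV, P)$-category has one object and hom the terminal object of $\cV$, which is sent to the terminal object of $\cW$ by $H$; and for the binary product $A \times B$ of $(\cV,P)$-categories, the homs are products $A(a,a') \times B(b,b')$ in $\cV$, which $H$ preserves, giving $H_*(A \times B) \iso H_* A \times H_* B$ in $(\cW, HP)\cat{-Cat}$. None of these steps involves any genuine obstacle; the content is entirely in checking that product-preservation by $H$ transports each piece of operad and $(\cV,P)$-category structure in the obvious way.
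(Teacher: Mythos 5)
Your proof is correct and matches the paper's treatment: the paper states this lemma with the single remark ``Straightforward calculations give the following,'' and your argument simply carries out those routine checks (transporting the operad structure, the composition morphisms, and the product of weakly enriched categories along the product-comparison isomorphisms of $H$). Nothing further is needed.
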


The endofunctor $\DC$ of $\cat{Top}/\CATp$ can now be described abstractly
by 
\[
\DC \left( \cat{Top} \tmap{\Pi} \cl{V} \right)
\ = \ 
\left( \cat{Top} \tmap{\Pi^+} (\cl{V}, \Pi E)\cat{-Cat} \right)
\]
where $\Pi^+$ is the composite functor
\[
\cat{Top}
\map{\Gamma}
(\cat{Top}, E)\cat{-Cat}
\map{\Pi_*}
(\cl{V}, \Pi E)\cat{-Cat}
\]
(which is in $\CATp$ by the preceding results).  

We now give Trimble's definition of $n$-category inductively by applying $\DC$ repeatedly.

\begin{definition}

For
each $n \geq 0$, we define simultaneously
\begin{itemize}
\item a category $n\cat{-Cat}$ with finite products, whose objects are called
\demph{Trimble $n$-categories} (but whose morphisms should be thought of as
\emph{strict} $n$-functors)

\item a finite product preserving functor $\Pi_n\: \cat{Top} \lra n\cat{-Cat}$,
the \demph{fundamental $n$-groupoid} functor.
\end{itemize}
We use the fundamental $(n - 1)$-groupoid of each space $E(k)$ to parametrise
$k$-ary composition in an $n$-category.  The definitions are:
\begin{itemize}
\item $0\cat{-Cat} = \cat{Set}$, and $\Pi_0\: \cat{Top} \lra \cat{Set}$ is the
functor sending a space to its set of path components
\item for $n \geq 1$,
\[
\left( \cat{Top} \mtmap{\Pi_n} n\cat{-Cat} \right)
\ = \ 
\DC
\left( \cat{Top} \mtmap{\Pi_{n - 1}} (n - 1)\cat{-Cat} \right).
\]
\end{itemize}
\end{definition}

{Incoherent} Trimble $n$-categories are defined similarly.  As before,
the key is to find the part of the definition that gives the coherence axioms,
and remove it.  In this case it comes from defining $\Pi_0 X$ as the set of path components of $X$---this ensures coherence at
the top dimension.  So we simply remove this and replace it with the set of points, as follows.

\begin{definition}
For each $n \geq 0$, we define simultaneously
\begin{itemize}
\item a category $n\cat{-iCat}$ with finite products, whose objects are called
\demph{incoherent Trimble $n$-categories}
\item a finite product preserving functor $\iPi{n}\: \cat{Top} \lra
n\cat{-iCat}$
\end{itemize}
as follows.
\begin{itemize}
\item $\cat{$0$-iCat} = \cat{Set}$, and $\iPi{0}\: \cat{Top} \lra \cat{Set}$ is the
functor sending a space to its set of points
\item for $n \geq 1$,
\[
\left( \cat{Top} \mtmap{\iPi{n}} \cat{$n$-iCat} \right)
\ = \ 
\DC
\left( \cat{Top} \mtmap{\iPi{n - 1}} \cat{$(n-1)$-iCat} \right).
\]
\end{itemize}

(The re-use of the notation $\iPi{n}$ from Definition~\ref{defn:Piin-Trim}
will be justified shortly.)  We also define, for each $n \geq 1$, a
morphism
\[ U_n \: \left(\pspicture(-6.5,7.5)(6.5,18)

\rput(0,15){\rnode{a1}{$\Top$}}  
\rput(0,2){\rnode{a2}{$\cat{$n$-iCat}$}}  

\ncline{->}{a1}{a2} \naput{{\scriptsize $\iPi{n}$}} 

\endpspicture\right)
\tra
\left(\pspicture(-11.5,7.5)(11.5,18)

\rput(0,15){\rnode{a1}{$\Top$}}  
\rput(0,2){\rnode{a2}{$\cat{$(n-1)$-iCat}$}}  

\ncline[nodesepB=2pt]{->}{a1}{a2} \naput{{\scriptsize $\iPi{n-1}$}} 

\endpspicture\right)
\]
in $\Top/\CATp$ as follows:
\begin{itemize}
\item $U_1\: 1\cat{-iCat}  = (\cat{Set}, \iPi{0} E)\cat{-Cat} \lra \cat{Set}$ is
the functor that takes the set of objects
\item for $n \geq 2$, $U_n = \DC(U_{n - 1})$.
\end{itemize}
\end{definition}

In Section~\ref{sec:weak} we saw that, for one notion of weak
higher-dimensional category, the category of weak $\omg$-categories is the
limit over all $n$ of the categories of incoherent $n$-categories.  Motivated
by this, we make the definition of Trimble weak $\omg$-categories analogously; in fact we take a limit over all the $\iPi{n}$ so that we get a fundamental $\omg$-groupoid functor at the same time.

\begin{mydefinition} 
\label{sevenpointnine}
Define
\[\psset{unit=0.1cm,labelsep=2pt,nodesep=2pt}
\pspicture(5,17)

\rput(0,15){\rnode{a1}{$\Top$}}  
\rput(0,2){\rnode{a2}{$\cat{$\omg$-Cat}$}}  

\ncline{->}{a1}{a2} \naput{{\scriptsize $\Pi_\omg$}} 

\endpspicture\]
to be the limit in $\cat{Top}/\CAT$ of the diagram 
\begin{equation}\label{diagnine}
\cdots 
\map{U_3}
\left(\pspicture(-6,7.5)(6,18)

\rput(0,15){\rnode{a1}{$\Top$}}  
\rput(0,2){\rnode{a2}{$\cat{$2$-iCat}$}}  

\ncline[nodesep=2pt]{->}{a1}{a2} \naput{{\scriptsize $\iPi{2}$}} 

\endpspicture\right)
\map{U_2}
\left(\pspicture(-6,7.5)(6,18)

\rput(0,15){\rnode{a1}{$\Top$}}  
\rput(0,2){\rnode{a2}{$\cat{$1$-iCat}$}}  

\ncline[nodesep=2pt]{->}{a1}{a2} \naput{{\scriptsize $\iPi{1}$}} 

\endpspicture\right)
\map{U_1}
\left(\pspicture(-6,7.5)(6,18)

\rput(0,15){\rnode{a1}{$\Top$}}  
\rput(0,2){\rnode{a2}{$\cat{$0$-iCat}$}}  

\ncline[nodesep=2pt]{->}{a1}{a2} \naput{{\scriptsize $\iPi{0}$}} 

\endpspicture\right)
\end{equation}
As in the previous section, Proposition~\ref{prop:coslice-creates} tells us that this limit exists and $\cat{$\omg$-Cat}$ is the limit of the categories $\cat{$n$-iCat}$.  Objects of
$\omg\cat{-Cat}$ are \demph{Trimble $\omg$-categories}, and $\Piom$ is the
\demph{fundamental $\omg$-groupoid} functor.  The $n$th projection of the
limit is written $\tr_n$ and called \demph{truncation}.

\end{mydefinition}

\begin{remark}
As for the underlying graph version (Remark~\ref{sixprecise}), the commuting triangles 
\[\pspicture(20,20)
\rput(0,10){\rnode{a1}{$\Top$}}  
\rput(19,17){\rnode{a2}{$\cat{$\omg$-Cat}$}}  
\rput(19,3){\rnode{a3}{$\cat{$n$-Cat}$}}  

\psset{labelsep=2pt}
\ncline{->}{a1}{a2} \naput[npos=0.55]{{\scriptsize $\Piom$}} 
\ncline{->}{a1}{a3} \nbput[npos=0.55]{{\scriptsize $\iPi{n}$}} 
\ncline[labelsep=2pt]{->}{a2}{a3} \naput{{\scriptsize $\mbox{tr}_n$}} 

\endpspicture\]
tell us how $\Piom$ behaves.
\end{remark}

We will prove that $\left(\cat{Top} \tmap{\Piom} \omg\cat{-Cat}\right)$ is the terminal
coalgebra for $\DC$, using Ad\'amek's Theorem.  As in the strict case, we will deduce this from the result for monads in the next section, but the following result gives a strong indication that the result is true.  As usual, we compare the sequential diagram whose limit defines our $\omg$-categories, with the sequential diagram whose limit appears in Ad\'amek's Theorem.

\begin{lemma} 
\label{lemma:DC-ladder}

There are canonical isomorphisms $(I_n)_{n \geq 0}$ such that the following
diagram commutes:
\[
\psset{unit=0.1cm,labelsep=2pt,nodesep=2pt}
\pspicture(100,44)


\rput(0,40){$\cdots$}
\rput(0,10){$\cdots$}

\rput(0,40){\rnode{x0}{\white{$\DG^3\1$}}}
\rput(30,40){\rnode{x1}{$\DC^3\1$}}
\rput(65,40){\rnode{x2}{$\DC^2\1$}}
\rput(100,40){\rnode{x3}{$\DC\1$}}

\psset{nodesep=4pt}
\ncline{->}{x0}{x1} \naput{{\scriptsize $\DC^3!$}}
\ncline{->}{x1}{x2} \naput{{\scriptsize $\DC^2!$}}
\ncline{->}{x2}{x3} \naput{{\scriptsize $\DC!$}}

\rput(3,10){\rnode{y0}{\white{$\DG^3\1$}}}

\rput(30,10){\rnode{y1}{
$\left(\pspicture(-6,7.5)(6,18)
\rput(0,15){\rnode{a1}{$\Top$}}  
\rput(0,2){\rnode{a2}{$\cat{2-iCat}$}}  
\ncline{->}{a1}{a2} \naput{{\scriptsize $\iPi{2}$}} 

\endpspicture\right)$}}

\rput(65,10){\rnode{y2}{
$\left(\pspicture(-6,7.5)(6,18)
\rput(0,15){\rnode{a1}{$\Top$}}  
\rput(0,2){\rnode{a2}{$\cat{1-iCat}$}}  
\ncline{->}{a1}{a2} \naput{{\scriptsize $\iPi{1}$}} 

\endpspicture\right)$}}

\rput(100,10){\rnode{y3}{
$\left(\pspicture(-6,7.5)(6,18)
\rput(0,15){\rnode{a1}{$\Top$}}  
\rput(0,2){\rnode{a2}{$\cat{0-iCat}$}}  
\ncline{->}{a1}{a2} \naput{{\scriptsize $\iPi{0}$}} 

\endpspicture\right)$}}

\psset{nodesep=2pt}
\ncline{->}{y0}{y1} \naput{{\scriptsize $U_3$}}
\ncline{->}{y1}{y2} \naput{{\scriptsize $U_2$}}
\ncline{->}{y2}{y3} \naput{{\scriptsize $U_1$}}

\psset{nodesepA=5pt,nodesepB=3pt}
\ncline{->}{x1}{y1} \naput{{\scriptsize $I_2$}} \nbput{{\scriptsize $\iso$}}
\ncline{->}{x2}{y2} \naput{{\scriptsize $I_1$}} \nbput{{\scriptsize $\iso$}}
\ncline{->}{x3}{y3} \naput{{\scriptsize $I_0$}} \nbput{{\scriptsize $\iso$}}

\endpspicture
\]

\end{lemma}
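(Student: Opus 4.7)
The plan is to mimic the induction used in the analogous results for $\FG$ (Theorem~\ref{thm:gph-tc}) and $\FC$ (Lemma~\ref{lemma:FC-ladder}). It suffices to construct a canonical isomorphism $I_0 \: \DC(\1) \iso \iPi{0}$ in the slice $\Top/\CATp$ (writing $\iPi{0}$ as shorthand for the object $\Top \lra \cat{$0$-iCat}$), and then to set $I_n := \DC^n(I_0)$ for $n \geq 1$. By construction, each square of the ladder above dimension zero is the image under $\DC$ of the square immediately to its right, so the whole diagram will commute provided the rightmost square does.

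For the base case I would unfold $\DC(\Top \lra \1)$ using Definition~\ref{DC}. Its underlying category is $(\1, !E)\cat{-Cat}$; since a graph enriched in the terminal category $\1$ is just a set of objects with trivial hom-data, the $!E$-action is forced, and there is a canonical isomorphism $(\1, !E)\cat{-Cat} \iso \Set = \cat{$0$-iCat}$. Under this identification the functor $\Pi^+ = !_\ast \circ \Gamma \: \Top \lra (\1, !E)\cat{-Cat}$ sends a space $X$ to the set of objects of $\Gamma X$, namely the set of points of $X$; this agrees on the nose with $\iPi{0}$, yielding $I_0$.

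For the inductive step, set $I_n := \DC^n(I_0)$. By the very definitions of $\iPi{n}$ (as $\DC$ of $\iPi{n-1}$) and of $U_n$ (as $\DC$ of $U_{n-1}$ for $n \geq 2$, with base $U_1$ the object-set functor), every square in the ladder above dimension zero is $\DC$ applied to the square one step to its right. So I only need to verify that the rightmost square commutes, that is, $I_0 \circ \DC(!) = U_1 \circ I_1$; this amounts to the statement that, under the identification $(\1, !E)\cat{-Cat} \iso \Set$, the functor $\DC(!)$ agrees with $U_1$ --- both simply return the underlying set of objects of a weakly enriched category.

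The main obstacle is essentially bookkeeping: one must track the operad parameter through successive applications of $\DC$, since each level produces not only a new enriched category $(\cV, \Pi E)\cat{-Cat}$ but also a new operad $\Pi^+ E$ acting on it. Once one verifies that truncation/forgetful maps are stable under the $(\ )_\ast$ construction --- so that $\DC$ of a truncation is a truncation at the next dimension, and the operad parameter transforms compatibly --- the induction runs exactly as in the strict case, and no ingredients beyond those already assembled are required.
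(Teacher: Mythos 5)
Your proposal is correct and follows essentially the same route as the paper: the paper's proof simply says that one defines $I_0$, verifies the rightmost square, and obtains the rest by repeated application of $\DC$, which is exactly the structure of your argument. Your explicit identification of $\DC\1$ with $(\1,!E)\cat{-Cat}\iso\Set$ carrying the set-of-points functor, and the observation that each higher square is $\DC$ of the one to its right, is just the paper's ``straightforward'' step written out in full.
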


\begin{proof}
As usual, it is straightforward to define $I_0$ and verify the commutativity
of the rightmost square, and the rest follows by repeated application of $\DC$.
\end{proof}

\begin{theorem} 
\label{thm:trim-cat-tc}

The category of Trimble $\omg$-categories together with the fundamental
$\omg$-groupoid functor
\[\psset{unit=0.1cm,labelsep=2pt,nodesep=2pt}
\pspicture(5,17)

\rput(0,15){\rnode{a1}{$\Top$}}  
\rput(0,2){\rnode{a2}{$\cat{$\omg$-Cat}$}}  

\ncline{->}{a1}{a2} \naput{{\scriptsize $\Pi_\omg$}} 

\endpspicture\]
is the terminal coalgebra for $\DC$.  

\end{theorem}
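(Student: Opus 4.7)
The plan is to apply Ad\'amek's Theorem (\ref{adamek}) to $D_C$ on $\Top/\CATp$, in exact parallel with the approach used for $\omega$-graphs (Theorem~\ref{thm:gph-tc}) and strict $\omega$-categories (Theorem~\ref{thm:strict-cat-tc}). By Lemma~\ref{lemma:DC-ladder}, the Ad\'amek tower $\cdots \to D_C^2\mathbf{1} \to D_C\mathbf{1} \to \mathbf{1}$ is canonically isomorphic to the truncation tower (\ref{diagnine}) whose limit in $\Top/\CAT$ is by construction $\bigl(\Top \xrightarrow{\Piom} \omega\cat{-Cat}\bigr)$. So two things remain to check: (i) that this limit actually lives in $\Top/\CATp$, and (ii) that $D_C$ preserves it.

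For (i), I would appeal to Proposition~\ref{prop:coslice-creates}: the forgetful functor $\Top/\CATp \to \CATp$ creates limits, so the question reduces to showing that the diagram $\cdots \to \cat{2-iCat} \to \cat{1-iCat} \to \cat{0-iCat}$ has a limit in $\CATp$ agreeing with its limit in $\CAT$. Each $\cat{$n$-iCat}$ is in $\CATp$ and each $U_n$ preserves finite products, since both are obtained by applying $D_C$, which factors through $\CATp$ by construction. A routine cofiltered-limit argument, along the lines of the technical reflection lemma (Proposition~\ref{twopointeight}) used in the strict case, then gives that the $\CAT$-limit inherits pointwise finite products preserved by the projections, so it is a limit in $\CATp$.

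For (ii) I expect the cleanest route to be the monadic detour carried out in the next section: introduce an endofunctor $D_M$ on an appropriate coslice of $\MNDd$ which lifts $D_C$ along $\Alg$ in the sense of Lemma~\ref{lemma:comparing-Fs}, prove the analogue of Theorem~\ref{thm:strict-monad-tc} for $D_M$ by verifying weakness of the connecting monad morphisms and invoking the appendix reflection results (Proposition~\ref{twopointseven}), and then transport the conclusion through $\Alg$, which preserves limits as a right adjoint (Proposition~\ref{twopointthree}). The main obstacle is precisely this preservation step. A head-on direct proof would require unpacking a $(\omega\cat{-Cat}, \Piom E)\cat{-Cat}$ simultaneously as a compatible family of $(\cat{$n$-iCat}, \iPi{n} E)\cat{-Cat}$s whose underlying enriched categories \emph{and} operad actions match across dimensions, intertwining the limit of categories with the limit of operads that weaken them. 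The monadic lift handles both phenomena uniformly, since the monad whose algebras are $(\cV, P)\cat{-Cat}$ (a distributive composite along the lines of Proposition~\ref{fourpointnine}) encapsulates both the enrichment and the weakening inside a single monad; this is what makes the monadic argument strictly easier than a direct attack on the preservation step.
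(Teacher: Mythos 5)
Your proposal is correct and follows essentially the same route as the paper: Lemma~\ref{lemma:DC-ladder} to identify the Ad\'amek tower with the truncation tower, reflection of limits along $\Top/\CATp \to \CATp \hookrightarrow \CAT$ (Propositions~\ref{prop:coslice-creates} and~\ref{prop:little-refl}) to identify the limit, and the monadic lift $\DM$ of Lemma~\ref{lemma:comparing-Ds} together with Theorem~\ref{thm:trim-monad-tc} and the limit-preservation of $\Alg$ to handle preservation by $\DC$. The paper's proof is exactly this argument, deferred until after the monad-level theorem is established.
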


We defer the proof until the end of the next section.


\section{The monad for Trimble $\omg$-categories}\label{sec:trimmonad}
\label{trimblemnd}

We continue with a process analogous to that for strict $\omg$-categories.
We will show that there is a monad for Trimble $\omg$-categories and that
it is the terminal coalgebra for a certain endofunctor.

As in the previous section, we will need to encode the ``fundamental $n$-category'' functors as part of this data.  The idea is that our endofunctor should act on data such as:
\begin{itemize}
 \item the category of $n$-graphs \cat{$n$-Gph}, together with
\item the monad for incoherent $n$-categories $\iT{n}$ on it, equipped with
\item the fundamental incoherent $n$-category functor $\Top \lra \cat{$n$-iCat}$
\end{itemize}
and produce the $(n+1)$-dimensional version.  Expressing \cat{$n$-iCat} as $\cat{Alg}(\iT{n})$ leads us to consider the comma category $\Top \downarr \Alg$, but as in the analysis of the monad for strict $\omg$-categories we must restrict to infinitely distributive categories and the appropriate functors.  Thus, we work with the following category.  Recall from
Proposition~\ref{prop:FTM-dist} that $\Alg\: \MND \lra \CAT$ restricts to a
functor $\MNDd \lra \CATd$, which we also call $\Alg$ or, for emphasis,
$\Algd$.  

\begin{definition}
We denote by $\cat{Top} \downarr \Algd$ the comma category whose objects are triples $(\cl{V}, T, \Pi)$ where
\begin{itemize}

\item \cl{V} is an infinitely distributive category

\item $T$ is a monad on \cl{V} whose functor part preserves coproducts, and

\item $\Pi$ is a functor $\cat{Top} \lra \cl{V}^T$ preserving coproducts and
finite products.

\end{itemize}
We often write an object $(\cl{V}, T, \Pi)$ as 
\[
\left( \cat{Top} \tmap{\Pi} \cl{V}^T \right).
\]
\end{definition}

\begin{example}

For a prototype example of such an object, consider $\cl{V} = \cat{Gph}$, $T=
\cat{fc}$, $\Pi =$ the fundamental groupoid functor, that is
\[\Pi \: \Top \tra \Gph^{\cat{fc}} = \Cat.\]

\end{example}

Viewing lax morphisms of monads as commutative squares, a morphism
\[
(\cl{V}, T, \Pi) \lra (\cl{V}', T', \Pi') 
\]
consists of functors $H$ and $K$, preserving finite products and
small coproducts, such that the following diagram commutes:
\begin{equation}
\label{eq:house}
\psset{unit=0.1cm,labelsep=2pt,nodesep=2pt}
\pspicture(0,-1)(20,37)


\rput(10,35){\rnode{a0}{$\Top$}}  

\rput(0,18){\rnode{a1}{$\cV^T$}}  
\rput(20,18){\rnode{a2}{$\cV'^{T'}$}}  
\rput(0,0){\rnode{a3}{$\cV$}}  
\rput(20,0){\rnode{a4}{$\cV'$}}  

\ncline{->}{a0}{a1} \nbput{{\scriptsize $\Pi$}} 
\ncline{->}{a0}{a2} \naput{{\scriptsize $\Pi'$}} 

\ncline{->}{a1}{a2} \naput{{\scriptsize $K$}} 
\ncline{->}{a3}{a4} \nbput{{\scriptsize $H$}} 
\ncline{->}{a1}{a3} \nbput{{\scriptsize $G^T$}} 
\ncline{->}{a2}{a4} \naput{{\scriptsize $G^{T'}$}} 

\endpspicture
\end{equation}
Alternatively, it is a morphism $(H, \theta)\: (\cl{V}, T) \lra (\cl{V}', T')$
making the triangle in~(\ref{eq:house}) commute, where $K$ is the
induced functor on algebras.  

A standard argument proves:

\begin{lemma}
The adjunction $\cat{Inc} \ladj \Alg$ of Proposition~\ref{prop:FTM-dist}
induces an isomorphism of categories
\[
\cat{Top} \downarr \Algd 
\iso
(\cat{Top}, 1) / \MNDd.
\]
(Here $1$ is the identity monad on $\cat{Top}$.)  
\end{lemma}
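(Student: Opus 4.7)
The plan is to derive this as a special case of the general categorical fact that, for any adjunction $L \ladj R \: \cl{B} \tra \cl{A}$ and any object $a \in \cl{A}$, there is a canonical isomorphism of categories $La/\cl{B} \iso a \downarr R$. I would apply this to the adjunction $\cat{Inc} \ladj \Alg$ of Proposition~\ref{prop:FTM-dist} with $a = \Top$, using that $\cat{Inc}(\Top) = (\Top, 1)$; this gives the desired bijection between objects and between morphisms of the two categories.

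On objects, the adjunction bijection
\[
\MNDd\big((\Top, 1), (\cV, T)\big)
\iso
\CATd(\Top, \cV^T)
\]
sends a lax monad morphism $(\Pi, \pi) \: (\Top, 1) \tra (\cV, T)$ (consisting of a functor $\Pi \: \Top \tra \cV$ together with a natural transformation $\pi \: T\Pi \Ra \Pi$ satisfying the monad compatibility axioms) to the lifted functor $\widetilde{\Pi} \: \Top \tra \cV^T$ with $\widetilde{\Pi}(X) = (\Pi X, \pi_X)$. On morphisms, using the description of monad morphisms as commutative squares as in diagram~(\ref{eq:house}), the triangle compatibility on the $\CATd$ side corresponds to the compatibility under $(\Top, 1)$ on the $\MNDd$ side by naturality of the adjunction bijection.

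The only substantive check is that the finite product and small coproduct preservation conditions agree on the two sides, i.e.\ that $\widetilde{\Pi}$ preserves finite products and small coproducts if and only if the underlying functor $\Pi = G^T \widetilde{\Pi}$ does. This will follow from the standard fact that $G^T$ creates finite products, together with the assumption in $\MNDd$ that $T$ preserves small coproducts, which ensures $G^T$ also creates small coproducts. I do not anticipate any hard obstacle here; the result is essentially formal once the general adjunction principle is invoked, and the only computational content lies in verifying this compatibility of structure preservation under $G^T$.
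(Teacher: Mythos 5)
Your proof is correct, and it follows exactly the route the paper intends: the paper in fact states this lemma without proof, treating it as the standard formal consequence of the adjunction $\cat{Inc} \ladj \Alg$ (namely $La/\cl{B} \iso a \downarrow R$ applied to $\cat{Inc}(\Top) = (\Top,1)$), which is precisely the general principle you invoke. Your final compatibility check --- that $\widetilde{\Pi}$ preserves finite products and small coproducts iff $G^T\widetilde{\Pi}$ does, via $G^T$ creating these (co)limits when $T$ preserves coproducts --- is exactly the content already established in the paper's appendix proof that the adjunction restricts to $\CATd$ and $\MNDd$, so it is a sound (if slightly redundant) way to close the argument.
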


From this point of
view, an object of the category is an infinitely distributive monad $(\cl{V},
T)$ together with a lax morphism of monads $(\cat{Top}, 1) \lra
(\cl{V}, T)$ preserving finite products and small coproducts.  A morphism in
this category is a commutative triangle.  We will use both points of view; the slice category point of view will enable us to calculate certain limits more easily.

Next we define the endofunctor $\DM$ of $\cat{Top} \downarr \Algd$, whose
terminal coalgebra will turn out to be the monad for Trimble
$\omg$-categories.  First it is useful to note the following result.

 \begin{lemma} 
\label{lemma:DC-restricts}

The endofunctors $\DG$ and $\DC$ of $\Top/\CAT$ each restrict to an endofunctor of $\cat{Top}/\CATd$.
\end{lemma}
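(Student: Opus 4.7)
My plan is to handle $\DG$ and $\DC$ in parallel, since both are defined by the same recipe $\Pi^+ = \Pi_* \circ \Gamma$ (landing in $\cV\text{-Gph}$ or in $(\cV,\Pi E)\text{-Cat}$ respectively) and both act on morphisms via $H \mapsto H_*$. For each functor I need to verify two things: (i) the image of an object of $\Top/\CATd$ again lies in $\Top/\CATd$, and (ii) the image of a morphism is again a morphism in $\Top/\CATd$.

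For (i), the key object-level fact is that enrichment preserves infinite distributivity: if $\cV \in \CATd$ then so are $\cV\text{-Gph}$ and $(\cV,P)\text{-Cat}$ for any operad $P$ in $\cV$. This is essentially the content already noted just after Definition~\ref{defn:inf-dist} (which gave Lemma~\ref{lemma:FG-restricts}); for $(\cV,P)\text{-Cat}$ one argues the same way, since products and coproducts of $(\cV,P)$-categories are computed by taking the product/disjoint union on underlying object-sets and then the hom-objects by products/coproducts/initial objects in $\cV$, with the operad action inherited termwise.

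For the functor $\Pi^+ = \Pi_* \circ \Gamma$ I would check each factor separately. The functor $\Gamma\:\Top \lra \Top\text{-Gph}$ (resp.\ into $(\Top,E)\text{-Cat}$) preserves finite products because a path in $X \times Y$ is precisely a pair of paths in $X$ and $Y$, and preserves small coproducts because a path in $\coprod_i X_i$ lies entirely in a single component (the cross-component hom-spaces in $\Gamma(\coprod_i X_i)$ are therefore empty, matching the initial object in the coproduct of $\Top$-graphs). Then if $\Pi$ preserves finite products and small coproducts, so does $\Pi_*$: on objects $\Pi_*$ is the identity, and on homs it applies $\Pi$, which handles the binary products, terminal object, and the (possibly empty) cross-component hom-objects all by assumption. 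Composing gives the result. For $\DC$ one also needs $\Pi E$ to genuinely be an operad in $\cV$, which uses only that $\Pi$ preserves finite products.

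For (ii), a morphism in $\Top/\CATd$ has underlying functor $H \in \CATd$, i.e.\ $H$ preserves finite products and small coproducts. The induced $H_*$ on enriched graphs (resp.\ $(\cV,P)$-categories) acts as the identity on object-sets and applies $H$ on homs, so the very same objectwise argument as above shows that $H_*$ preserves finite products and small coproducts, placing the image morphism in $\Top/\CATd$. The main bit of care is in the coproduct step: one must use that $H$ preserves the initial object to conclude that the cross-component homs of $H_*(A \amalg B)$ match those of $H_* A \amalg H_* B$. This is the only non-formal point, but it is immediate once the coproduct of $\cV$-graphs is written out explicitly. Everything else is bookkeeping.
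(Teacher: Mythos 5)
You follow essentially the same route as the paper: factor $\Pi^+$ as $\Pi_*\circ\Gamma$, note that enrichment preserves infinite distributivity, check that $\Pi_*$ and $H_*$ inherit preservation of finite products and small coproducts homwise, and reduce everything to the claim that $\Gamma$ lies in $\CATd$. The coproduct half of that claim, and all the surrounding bookkeeping, are fine. The weak point is the product half, which is the only place in this lemma where something genuinely has to be proved. ``A path in $X\times Y$ is precisely a pair of paths in $X$ and $Y$'' gives a bijection of underlying sets, but the homs of a $\Top$-graph are \emph{spaces}, so what is needed is a homeomorphism $(X\times Y)\bigl((x,y),(x',y')\bigr)\iso X(x,x')\times Y(y,y')$, and continuity of the inverse is not automatic when working with arbitrary topological spaces. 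The paper supplies exactly this missing step: $X(x,y)$ is the equaliser of the two maps $(\ev{0},\ev{1})$ and $(\Delta x,\Delta y)$ from $X^{[0,1]}$ to $X\times X$, where the exponential $X^{[0,1]}$ exists because $[0,1]$ is compact Hausdorff; since $(-)^{[0,1]}$ is a right adjoint it preserves limits, and equalisers commute with finite products, so $\Gamma$ preserves them. You should add this (or an equivalent compact--open argument); as written, the product claim for $\Gamma$ is asserted rather than proved.

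The only other divergence is stylistic. For $\DC$ you verify the distributivity of $(\cV,P)\cat{-Cat}$ and the preservation properties of $\Gamma\:\Top\lra(\Top,E)\cat{-Cat}$ by writing out products and coproducts of $(\cV,P)$-categories explicitly, whereas the paper observes that the forgetful functor $(\cV,P)\cat{-Cat}\lra\cV\cat{-Gph}$ creates finite products and small coproducts and so reduces the $(\Top,E)\cat{-Cat}$ case to the already-established graph case via the commuting triangle of Lemma~\ref{newsevenpointseven}. Both are valid; the paper's reduction simply avoids repeating the homwise check.
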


\begin{proof}
Recall that $\DG$ is defined by
\[\DG(\Top \tmap{\Pi}\cV) = (\Top \tmap{\Gamma} \cat{Top-Gph} \tmap{\Pi_*} \cat{$\cV$-Gph}).\]
Now if $\cV$ and $\Pi$ are in $\CATd$ then certainly \cat{$\cV$-Gph} and
$\Pi_*$ are; we need to show that $\Gamma$ is in $\CATd$, that is, that it
commutes with finite products and small coproducts.  The latter is
straightforward; the former follows from the definition of the path space
$X(x,y)$ as the equaliser of the pair 
\[\psset{labelsep=2pt,nodesep=2pt}
\pspicture(0,-2)(35,7)
\rput(0,3){\rnode{a1}{\raisebox{3pt}{$X^{[0,1]}$}}}
\rput(28,3){\rnode{a2}{$X \times X$}}
\psset{nodesep=3pt,arrows=->}
\ncline[offset=3pt]{a1}{a2}\naput{{\scriptsize $(\cat{ev}_0, \cat{ev}_1)$}}
\ncline[offset=-3pt]{a1}{a2}\nbput{{\scriptsize $(\Delta x, \Delta y)$}}
\endpspicture
\]
where $X^{[0, 1]}$ is the exponential in $\cat{Top}$ (which exists since $[0,
1]$ is compact Hausdorff), $\mathrm{ev}_0$ and $\mathrm{ev}_1$ are evaluation
at $0$ and $1$, and $\Delta x$ and $\Delta y$ are constant at $x$ and $y$. 

We now turn our attention to $\DC$.  Recall that we have the definition
\[\DC\left(\Top \tmap{\Pi}\cV\right) \ = \ \left(\Top \tmap{\Gamma} \cat{$(\Top,E)$-Cat} \tmap{\Pi_*} \cat{$(\cV,\Pi E)$-Cat}\right)\]
with $\Gamma$ defined in Lemma~\ref{newsevenpointseven}.  If $\cV$ and $\Pi$ are in $\CATd$ then a straightforward calculation shows that \cat{$(\cV,\Pi E)$-Cat} and $\Pi_*$ are too.  We now show that $\Gamma \in \CATd$.  We use the commuting triangle from Lemma~\ref{newsevenpointseven}
\[\psset{unit=0.1cm,labelsep=2pt,nodesep=3pt,npos=0.4}
\pspicture(27,22)

\rput(0,10){\rnode{a1}{$\Top$}}  
\rput(27,19){\rnode{a2}{$\cat{$(\Top,E)$-Cat}$}}  
\rput(27,1){\rnode{a3}{$\cat{Top-Gph}$}}  

\ncline{->}{a1}{a2} \naput[npos=0.5]{{\scriptsize $\Gamma$}} 
\ncline{->}{a1}{a3} \nbput[npos=0.5]{{\scriptsize $\Gamma$}} 
\ncline[labelsep=2pt]{->}{a2}{a3} \naput{{\scriptsize $$}} 

\endpspicture\]
where the vertical arrow is the forgetful functor.  A routine calculation
shows that if $\cV \in \CATd$ then the forgetful functor
\[\cat{$(\cV,P)$-Cat} \lra \cat{$\cV$-Gph}\]
creates finite products and small coproducts, and the result then follows
from our analysis of $\Gamma: \Top \lra \cat{Top-Gph}$ above.  \end{proof} 

We now define the crucial endofunctor on $\Top \downarr \Algd$.  We proceed analogously to our process of defining $\FM$, with two propositions analogous to \ref{fourpointeight} and \ref{fourpointnine}.  The following result generalises Kelly's theorem~(\ref{prop:kelly}) on free
enriched categories.

\begin{proposition} 
\label{prop:VP-kelly}

Let $\cl{V}$ be an infinitely distributive category and $P$ an operad in
$\cl{V}$.  Then the forgetful functor
\[
(\cl{V}, P)\cat{-Cat} \lra \cl{V}\cat{-Gph}
\]
is monadic.  The induced monad is the ``free $(\cl{V}, P)$-category monad''
$\fc{(\cl{V}, P)}$, which is the identity on underlying sets of objects, and
is given on hom-sets as follows: for a $\cl{V}$-graph $A$ and objects $a, a'
\in A$, 
\[\left(\fc{(\cl{V},P)} A\right)(a,a') = 
\coprod_{k \in \mathbb{N}, a=a_0, a_1, \ldots, a_{k-1}, a_k=a'}
P(k) \times A(a_{k-1},a_k) \times \cdots \times A(a_0,a_1).\] 

\end{proposition}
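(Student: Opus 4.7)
My plan is to adapt the proof of Kelly's theorem (Proposition \ref{prop:kelly}), which is precisely the case $P = 1$ (the terminal operad), to accommodate the extra $P(k)$ factors in each composable string. The overall strategy is to construct the candidate monad $\fc{(\cV, P)}$ directly from the given formula, identify its algebras with $(\cV, P)$-categories, and then read off monadicity of the forgetful functor.

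First I would define the endofunctor $\fc{(\cV, P)}$ on $\cV\cat{-Gph}$ by the displayed formula, taking the same set of objects. Infinite distributivity guarantees that this is a well-defined functor (tensor preserves the coproducts indexing the composable strings), and ensures the coproducts are stable under pullback, which is what makes the subsequent bookkeeping work. The unit $\eta_A \: A \lra \fc{(\cV, P)} A$ is the inclusion into the $k = 1$ summand, using the unit $1 \lra P(1)$ of the operad to supply the required factor. The multiplication $\mu_A \: \fc{(\cV, P)} \fc{(\cV, P)} A \lra \fc{(\cV, P)} A$ is built on components: a summand of the domain is indexed by a composable $m$-tuple $(p; \phi_1, \ldots, \phi_m)$ in $\fc{(\cV, P)} A$, each $\phi_i$ being itself a term $P(k_i) \times A(\cdot) \times \cdots \times A(\cdot)$, and I would use the operad substitution $P(m) \times P(k_1) \times \cdots \times P(k_m) \lra P(k_1 + \cdots + k_m)$ together with rearrangement of the $A(\cdot)$ factors (using symmetry and associativity of $\times$) to land in the summand indexed by the concatenated composable $(k_1 + \cdots + k_m)$-tuple. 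Verification of the monad laws then reduces to the operad axioms.

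Next I would identify $\fc{(\cV, P)}$-algebras with $(\cV, P)$-categories. An algebra structure is a morphism $\fc{(\cV, P)} A \lra A$ of $\cV$-graphs (so identity on objects), which by the universal property of coproducts is equivalent data to a family
\[
\gamma \: P(k) \times A(a_{k-1}, a_k) \times \cdots \times A(a_0, a_1) \lra A(a_0, a_k),
\]
one for each composable string. The algebra associativity axiom (involving $\mu$) corresponds exactly to compatibility of $\gamma$ with the operad substitution, and the unit axiom corresponds to compatibility with the unit of $P$; these are precisely the axioms of Definition \ref{sevenpointone}. Morphisms of algebras match morphisms of $(\cV, P)$-categories identically, yielding an isomorphism of categories over $\cV\cat{-Gph}$. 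Monadicity of the forgetful functor $(\cV, P)\cat{-Cat} \lra \cV\cat{-Gph}$ then follows, since the forgetful functor from $\fc{(\cV, P)}$-algebras is monadic by construction.

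The main obstacle is bookkeeping: the multiplication of $\fc{(\cV, P)}$ requires distributing a tensor over a nested pair of coproducts and then reorganising the $P(k_i)$ factors alongside the operad composition map, and one must check that the resulting associativity diagram commutes summand by summand. This is where infinite distributivity is essential---without it one cannot even form $\mu$ coherently---but once the decomposition by summands is made, each axiom reduces to a diagram known to commute from the operad structure, so the only real work is indexing discipline rather than genuine mathematical content.
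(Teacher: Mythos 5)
Your proposal is correct and is essentially the standard argument: the paper itself gives no proof beyond citing Proposition~2.11 of~\cite{che16}, and that cited proof proceeds exactly as you do, by constructing the free $(\cl{V},P)$-category monad from the displayed formula (unit via $1 \lra P(1)$, multiplication via operadic substitution plus infinite distributivity) and identifying its algebras with $(\cl{V},P)$-categories over $\cl{V}\cat{-Gph}$. No gaps; the only caveat is the bookkeeping you already flag, namely permuting the $P(k_i)$ factors past the hom-objects when defining $\mu$, which is harmless since the monoidal structure is cartesian.
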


\begin{proof}
See Proposition~2.11 of~\cite{che16}.
\end{proof}

Suppose now that we are given an infinitely distributive category $\cl{V}$, an
operad $P$ in $\cl{V}^T$, and a coproduct-preserving monad $T$ on $\cl{V}$.
Since the forgetful functor
\[
G^T\: \cl{V}^T \lra \cl{V}
\]
preserves products, there is an induced operad $G^T P$ in $\cl{V}$, which we
shall often simply refer to as $P$.  We then have a functor
\[
G^T_* \:
(\cl{V}^T, P)\cat{-Cat}
\lra
(\cl{V}, G^T P)\cat{-Cat} = (\cl{V}, P)\cat{-Cat}.
\]
We also have two monads on $\cl{V}\cat{-Gph}$: the monad $\fc{(\cl{V}, P)}$ of
Proposition~\ref{prop:VP-kelly}, and the monad $T_* = \FG(T)$.  The following
result generalises the one for ordinary enriched categories (Proposition~\ref{prop:fc-T}).

\begin{proposition} 
\label{prop:fc-VP}\label{eightpointtwo}

Let $(\cl{V}, T) \in \MNDd$ and let $P$ be an operad in $\cl{V}^T$.  Then:
\begin{enumerate}
\item 
\label{part:diag-monadic}
The diagonal of the commutative square of forgetful functors
\[
\psset{unit=0.1cm,labelsep=3pt,nodesep=3pt}
\pspicture(30,24)



\rput(0,20){\rnode{a1}{\cat{$(\cV^T,P)$-Cat}}}  
\rput(30,20){\rnode{a2}{\cat{$\cV^T$-Gph}}}  
\rput(0,0){\rnode{a3}{\cat{$(\cV,P)$-Cat}}}  
\rput(30,0){\rnode{a4}{\cat{$\cV$-Gph}}}  

\ncline{->}{a1}{a2} \naput{{\scriptsize $$}} 
\ncline{->}{a3}{a4} \nbput{{\scriptsize $$}} 
\ncline{->}{a1}{a3} \nbput{{\scriptsize $G^T_*$}} 
\ncline{->}{a2}{a4} \naput{{\scriptsize $G^T_*$}} 

\endpspicture
\]

is monadic.

\item 
\label{part:distrib-law}
There is a canonical distributive law of $T_*$ over $\fc{(\cl{V},P)}$,
and the resulting monad $\fc{(\cl{V},P)} \circ T_*$ on $\cl{V}\cat{-Gph}$ is
the monad induced by this diagonal functor.

\item 
\label{part:distrib-distrib}
$(\cl{V}\cat{-Gph}, \fc{(\cl{V},P)} \circ T_*) \in \MNDd$.

\end{enumerate}

\end{proposition}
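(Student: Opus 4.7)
The plan is to mirror the proof of Proposition~\ref{prop:fc-T}, with modifications to accommodate the operad $P$ living in $\cV^T$ rather than the base category.

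\emph{Part (ii) first.} The core of the argument is constructing the distributive law $\lambda \: \fc{(\cV,P)} \circ T_* \Rightarrow T_* \circ \fc{(\cV,P)}$; parts (i) and (ii) will follow once this is in hand. Since $T$ preserves coproducts and the hom-object of $\fc{(\cV,P)}(T_*A)$ at $(a_0, a_k)$ is a coproduct over composable strings, it suffices to define, on the $(k, a_0, \ldots, a_k)$-summand, a map
\[
P(k) \times TA(a_{k-1}, a_k) \times \cdots \times TA(a_0, a_1)
\lra
T\bigl(P(k) \times A(a_{k-1}, a_k) \times \cdots \times A(a_0, a_1)\bigr),
\]
and then sum, using that $T$ preserves the coproduct on the right. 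This strength-like map is built from two ingredients: first, the $T$-algebra structure $\alpha \: TP(k) \to P(k)$ arising because $P$ is an operad in $\cV^T$; second, a canonical cartesian strength obtained by combining $\eta$, $\mu$ and finite products in $\cV$. The four distributive-law axioms then reduce to naturality together with the fact that the operad composition and unit of $P$ are $T$-algebra maps (i.e.\ the fact that $P$ is an operad internal to $\cV^T$, not merely in $\cV$).

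Once $\lambda$ is established, by the standard theory of distributive laws, $\fc{(\cV,P)} \circ T_*$ is a monad on $\cV\cat{-Gph}$, and its algebras consist of a $\cV$-graph with a $T_*$-algebra structure (equivalently, a $\cV^T$-graph structure) together with an $\fc{(\cV,P)}$-algebra structure (equivalently, a $(\cV,P)$-category structure) that are compatible via $\lambda$. Unpacking this compatibility condition shows precisely that the composition maps are $T$-algebra morphisms, i.e.\ that the composite data is a $(\cV^T, P)$-category. This also identifies the monad induced by the diagonal, proving the second half of (ii).

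\emph{Part (i).} Next I would verify monadicity of the diagonal. The category $\cV^T$ is infinitely distributive because $\cV$ is and $T$ preserves coproducts, so Proposition~\ref{prop:VP-kelly} applied to $\cV^T$ yields monadicity of the top forgetful functor $(\cV^T, P)\cat{-Cat} \to \cV^T\cat{-Gph}$, with induced monad $\fc{(\cV^T, P)}$. The right vertical $\cV^T\cat{-Gph} \to \cV\cat{-Gph}$ is $\FG$ applied to the monadic $G^T$ and is readily seen to be monadic (e.g.\ by checking Beck's conditions hom-wise). Monadicity of the diagonal itself then follows by Beck's theorem: the diagonal has a left adjoint as the composite of the two left adjoints, and it creates $U$-split coequalizers because both the horizontal and vertical forgetful functors do, and the candidate monad $\fc{(\cV,P)} \circ T_*$ from part (ii) is the induced one.

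\emph{Part (iii).} This is essentially a bookkeeping check. The category $\cV\cat{-Gph}$ inherits infinite distributivity from $\cV$ since finite products and small coproducts of $\cV$-graphs on a fixed object set are computed hom-wise. The functor $T_*$ preserves coproducts because $T$ does (and $\FG$ is a 2-functor). The functor $\fc{(\cV,P)}$ preserves coproducts by direct inspection of its explicit formula, using infinite distributivity of $\cV$ to commute the outer coproduct past the finite products $P(k) \times A(a_{k-1}, a_k) \times \cdots$. Hence the composite preserves coproducts.

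The main obstacle will be verifying the distributive-law axioms for $\lambda$: formally similar to the ordinary enriched case, they here genuinely mix operad coherence with the strength axioms for $T$, and depend crucially on each $P(k)$ being a $T$-algebra in a way compatible with operad composition and unit.
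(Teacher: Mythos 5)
The paper itself disposes of parts (i) and (ii) by citing Theorem~4.1 of~\cite{che16} and treats (iii) as a routine check, so your attempt at a direct, self-contained construction is a genuinely different route --- but it has a real gap at its core, namely the distributive law. For the composite $\fc{(\cl{V},P)} \circ T_*$ to inherit a monad structure, the law must be a natural transformation $T_* \circ \fc{(\cl{V},P)} \Rightarrow \fc{(\cl{V},P)} \circ T_*$; the $2$-cell you construct points the opposite way, $\fc{(\cl{V},P)} \circ T_* \Rightarrow T_* \circ \fc{(\cl{V},P)}$, which would instead make $T_* \circ \fc{(\cl{V},P)}$ into a monad. Worse, the summand-wise map you propose,
\[
P(k) \times TA(a_{k-1},a_k) \times \cdots \times TA(a_0,a_1) \lra T\bigl(P(k) \times A(a_{k-1},a_k) \times \cdots \times A(a_0,a_1)\bigr),
\]
does not exist in the stated generality: there is no ``canonical cartesian strength'' obtained from $\eta$, $\mu$ and finite products, and assembling such a map would require natural comparisons $TX \times TY \lra T(X \times Y)$, i.e.\ a commutative (monoidal) monad structure on $T$ for finite products. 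Nothing of the sort is assumed for an object of $\MNDd$, and it fails for the monads actually fed into the iteration (already the free monoid monad on $\cat{Set}$ is not commutative).

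The correct components go the other way and need no strength at all. Since $T$ preserves coproducts, $\bigl(T_*\fc{(\cl{V},P)}A\bigr)(a,a') \iso \coprod T\bigl(P(k)\times A(a_{k-1},a_k)\times\cdots\times A(a_0,a_1)\bigr)$, and on each summand one maps into $P(k)\times TA(a_{k-1},a_k)\times\cdots\times TA(a_0,a_1)$ with components $\alpha_k \circ T(\mathrm{pr}_0)$, where $\alpha_k\colon TP(k)\lra P(k)$ is the $T$-algebra structure of $P(k)$, and $T(\mathrm{pr}_i)$ for $i=1,\dots,k$. The distributive-law axioms then follow from the algebra axioms for $\alpha_k$, naturality, and the fact that the operad structure maps of $P$ are $T$-algebra morphisms; with this in place your identification of the compatible algebras with $(\cl{V}^T,P)$-categories and your part~(iii) go through as described. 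One further caution on part~(i): a composite of monadic functors need not be monadic, so ``both forgetful functors create the relevant split coequalizers'' does not by itself deliver monadicity of the diagonal (a pair whose image under the composite is split need not be split after only the first forgetful functor). The clean route is Beck's theorem on distributive laws, which yields part~(i) and the second half of part~(ii) simultaneously once the correctly oriented $\lambda$ is in hand.
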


\begin{proof}
Parts~(\ref{part:diag-monadic}) and~(\ref{part:distrib-law}) are Theorem~4.1
of~\cite{che16}.  Part~(\ref{part:distrib-distrib}) is a straightforward calculation (cf. the analogous part of Proposition~\ref{prop:fc-T}).
\end{proof}

\begin{definition}\label{DM}
 We define an endofunctor
\[\DM \: \tdalgd \lra \tdalgd\]
by
\[\DM(\cV, T, \Pi) = (\cat{$\cV$-Gph}, T^+, \Pi^+)\]
as follows.  Recall that this notation means $\Pi$ is a functor
\[\Top \map{\Pi} \cV^T.\]

\begin{itemize}
 \item $T^+$ is the monad $\fc{(\cl{V}, \Pi E)} \circ T_*$
on $\cl{V}\cat{-Gph}$, corresponding to the (monadic) forgetful functor
\[
(\cl{V}^T, \Pi E)\cat{-Cat} \lra \cl{V}\cat{-Gph}.
\]
Note that as $\Pi$ preserves products it does indeed induce an operad $\Pi E$ in $\cV^T$.

\item $\Pi^+$ is the functor
\[\Top \map{\Gamma} \cat{$(\Top, E)$-Cat} \map{\Pi_*} \cat{$(\cV^T,\Pi E)$-Cat} \iso \cat{$\cV$-Gph}^{T^+}.\]

\end{itemize}

Note that both $(\cl{V}^T, \Pi E)\cat{-Cat}$ and
$\Pi^+$ are in $\CATd$ (Lemma~\ref{lemma:DC-restricts}), so $(\cl{V}\cat{-Gph}, T^+, \Pi^+)$ is indeed an object of $\tdalgd$.

We now define $\DM$ on morphisms.  Given a morphism
\[
(\cl{V}, T, \Pi)
\lra
(\cl{V}', T', \Pi')
\]
expressed as
\begin{equation}
\psset{unit=0.1cm,labelsep=2pt,nodesep=2pt}
\pspicture(0,-3)(20,38)


\rput(10,35){\rnode{a0}{$\Top$}}  

\rput(0,18){\rnode{a1}{$\cV^T$}}  
\rput(20,18){\rnode{a2}{$\cV'^{T'}$}}  
\rput(0,0){\rnode{a3}{$\cV$}}  
\rput(20,0){\rnode{a4}{$\cV'$}}  

\ncline{->}{a0}{a1} \nbput{{\scriptsize $\Pi$}} 
\ncline{->}{a0}{a2} \naput{{\scriptsize $\Pi'$}} 

\ncline{->}{a1}{a2} \naput{{\scriptsize $K$}} 
\ncline{->}{a3}{a4} \nbput{{\scriptsize $H$}} 
\ncline{->}{a1}{a3} \nbput{{\scriptsize $G^T$}} 
\ncline{->}{a2}{a4} \naput{{\scriptsize $G^{T'}$}} 

\endpspicture
\end{equation}
in $\cat{Top} \downarr \Algd$, there
is an induced morphism
\[
(\cl{V}\cat{-Gph}, T^+, \Pi^+)
\lra
(\cl{V}'\cat{-Gph}, T'^+, \Pi'^+)
\]
in $\cat{Top} \downarr \Algd$, namely
\[
\psset{unit=0.1cm,labelsep=2pt,nodesep=2pt}
\pspicture(0,-3)(34,38)


\rput(17,35){\rnode{a0}{$\Top$}}  

\rput(0,18){\rnode{a1}{\cat{$(\cV^T \hh{-2pt},\Pi E)$-Cat}}}  
\rput(34,18){\rnode{a2}{\cat{$(\cV'^{T'} \hh{-3pt},\Pi' E)$-Cat}}}  
\rput(0,0){\rnode{a3}{\cat{$\cV$-Gph}}}  
\rput(34,0){\rnode{a4}{\cat{$\cV'$-Gph}}}  

\ncline{->}{a0}{a1} \nbput[labelsep=-1pt,npos=0.45]{{\scriptsize $\Pi^+$}} 
\ncline{->}{a0}{a2} \naput[labelsep=2pt,npos=0.53]{{\scriptsize ${\Pi'}^+$}} 

\ncline{->}{a1}{a2} \naput{{\scriptsize $K_*$}} 
\ncline{->}{a3}{a4} \nbput{{\scriptsize $H_*$}} 
\ncline{->}{a1}{a3} \nbput{{\scriptsize $$}} 
\ncline{->}{a2}{a4} \naput{{\scriptsize $$}} 

\endpspicture
\]
This is a morphism in $\tdalgd$ as $\DG$ and $\DC$ both restrict to $\Top/\CATd$.  This completes the definition of $\DM$.

\end{definition}

\begin{remark}
We can also directly describe the effect of $\DM$ on the isomorphic category
$(\cat{Top}, 1)/\MNDd$.  The image under $\DM$ of an object
\[
(\Lambda, \lambda)\: (\cat{Top}, 1) \lra (\cl{V}, T)
\]
is an object of the form
\[
(\cat{Top}, 1) \lra (\cl{V}\cat{-Gph}, T^+)
\]
whose underlying functor is $\Lambda^+\: \cat{Top} \lra \cl{V}\cat{-Gph}$ (as
defined in Section~\ref{sec:trimback}).  The image under $\DM$ of a morphism

\[
\psset{unit=0.1cm,labelsep=2pt,nodesep=2pt}
\pspicture(40,20)



\rput(20,15){\rnode{a1}{$(\Top,1)$}}  
\rput(0,0){\rnode{a2}{$(\cV,T)$}}  
\rput(40,0){\rnode{a3}{$(\cV',T')$}}  

\ncline{->}{a1}{a2} \nbput{{\scriptsize $(\Lambda,\lambda)$}} 
\ncline{->}{a1}{a3} \naput{{\scriptsize $(\Lambda',\lambda')$}} 
\ncline{->}{a2}{a3} \nbput{{\scriptsize $(H,\theta)$}} 

\endpspicture
\]
is
\[
\psset{unit=0.1cm,labelsep=2pt,nodesep=2pt}
\pspicture(0,-3)(40,20)



\rput(20,15){\rnode{a1}{$(\Top,1)$}}  
\rput(0,0){\rnode{a2}{$(\cat{$\cV$-Gph}, T^+)$}}  
\rput(40,0){\rnode{a3}{$(\cat{$\cV'$-Gph},{T'}^+)$}}  

\ncline{->}{a1}{a2} \nbput{{\scriptsize $$}} 
\ncline{->}{a1}{a3} \naput{{\scriptsize $$}} 
\ncline{->}{a2}{a3} \nbput{{\scriptsize $(H_*,\theta^+)$}} 

\endpspicture
\] 
where $\theta^+$ is the composite natural transformation
\begin{equation}
\label{eq:stacked-squares}
\psset{unit=0.1cm,labelsep=0pt,nodesep=3pt}
\pspicture(23,42)


\rput(-1,40){\rnode{a1}{\cat{$\cV$-Gph}}} 
\rput(21,40){\rnode{a2}{\cat{$\cV'$-Gph}}} 

\rput(-1,20){\rnode{b1}{\cat{$\cV$-Gph}}}   
\rput(21,20){\rnode{b2}{\cat{$\cV'$-Gph}}}  

\rput(-1,0){\rnode{c1}{\cat{$\cV$-Gph}}}   
\rput(21,0){\rnode{c2}{\cat{$\cV'$-Gph}}}  

\psset{nodesep=3pt,labelsep=2pt,arrows=->}
\ncline{a1}{a2}\naput{{\scriptsize $H_*$}} 
\ncline{b1}{b2}\nbput{{\scriptsize $H_*$}} 
\ncline{c1}{c2}\nbput{{\scriptsize $H_*$}} 

\ncline{a1}{b1}\nbput{{\scriptsize $T_*$}} 
\ncline{b1}{c1}\nbput{{\scriptsize $\cat{fc}_{(\cV,\Lambda E)}$}} 

\ncline{a2}{b2}\naput{{\scriptsize $T'_*$}} 
\ncline{b2}{c2}\naput{{\scriptsize $\cat{fc}_{(\cV'\hh{-2pt},\hh{1pt} \Lambda' E)}$}} 

\psset{labelsep=1.5pt,nodesep=4pt}

\pnode(13,33){a3}
\pnode(7,27){b3}
\ncline[doubleline=true,arrowinset=0.6,arrowlength=0.8,arrowsize=0.5pt 2.1]{a3}{b3} \naput[npos=0.4]{{\scriptsize $\theta_*$}}

\pnode(13,13){a3}
\pnode(7,7){b3}
\ncline[doubleline=true,arrowinset=0.6,arrowlength=0.8,arrowsize=0.5pt 2.1]{a3}{b3} \naput[npos=0.4]{{\scriptsize $\iso$}}

\endpspicture
\end{equation}
and the bottom square is the isomorphism induced by $H$ preserving products
and coproducts.
\end{remark}

As in Section~\ref{strictcat} we will transfer results about $\DM$ to results about $\DC$.  We now make precise the relationship between those functors.  First note that the functors
\[
\Und, \Alg\: \MNDd \lra \CATd
\]
of Proposition~\ref{prop:FTM-dist} induce functors
\[
\Und, \Alg\: 
(\cat{Top}, 1)/\MNDd \lra \cat{Top}/\CATd.
\]

\begin{lemma} 
\label{lemma:comparing-Ds}\label{eightpointfour}

The squares
\[
\psset{unit=0.1cm,labelsep=3pt,nodesep=3pt}
\pspicture(40,25)



\rput(0,20){\rnode{a1}{$(\Top,1)/\MNDd$}}  
\rput(40,20){\rnode{a2}{$(\Top,1)/\MNDd$}}  
\rput(0,0){\rnode{a3}{$\Top/\CATd$}}  
\rput(40,0){\rnode{a4}{$\Top/\CATd$}}  

\ncline{->}{a1}{a2} \naput{{\scriptsize $\DM$}} 
\ncline{->}{a3}{a4} \nbput{{\scriptsize $\DG$}} 
\ncline{->}{a1}{a3} \nbput{{\scriptsize $\Und$}} 
\ncline{->}{a2}{a4} \naput{{\scriptsize $\Und$}} 

\endpspicture
\]

\[
\psset{unit=0.1cm,labelsep=3pt,nodesep=3pt}
\pspicture(0,-4)(40,25)



\rput(0,20){\rnode{a1}{$(\Top,1)/\MNDd$}}  
\rput(40,20){\rnode{a2}{$(\Top,1)/\MNDd$}}  
\rput(0,0){\rnode{a3}{$\Top/\CATd$}}  
\rput(40,0){\rnode{a4}{$\Top/\CATd$}}  

\ncline{->}{a1}{a2} \naput{{\scriptsize $\DM$}} 
\ncline{->}{a3}{a4} \nbput{{\scriptsize $\DC$}} 
\ncline{->}{a1}{a3} \nbput{{\scriptsize $\Alg$}}
\ncline{->}{a2}{a4} \naput{{\scriptsize $\Alg$}} 

\endpspicture
\]
commute, the first strictly and the second up to a canonical isomorphism.

\end{lemma}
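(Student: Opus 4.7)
The plan is to verify both squares by object-chasing, in close analogy with the proof of Lemma~\ref{lemma:comparing-Fs}. Since both $\DM$ and $\DC$ are built from the same ingredients (the functor $\Gamma$, the operad $E$, and weakly enriched categories), and since the Remark following Definition~\ref{DM} already describes the underlying functor of $\DM$ explicitly, the first assertion should fall out by direct unpacking of definitions, and the second by invoking Proposition~\ref{prop:fc-VP}(ii).

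For the first (strictly commuting) square involving $\Und$, I would use the isomorphism $\cat{Top} \downarrow \Algd \iso (\cat{Top}, 1)/\MNDd$ to represent an object as a pair $(\cV, T)$ equipped with a lax morphism $(\Lambda, \lambda)\: (\cat{Top}, 1) \to (\cV, T)$. Then $\DG \circ \Und$ sends this to $(\cat{Top} \tmap{\Lambda^+} \cV\cat{-Gph})$ by Definition~\ref{DG}, while the Remark after Definition~\ref{DM} states that the underlying functor of $\DM(\Lambda, \lambda)$ is precisely $\Lambda^+$; hence the two routes agree on the nose. On morphisms, the natural transformation $\theta^+$ produced by $\DM$ is, by diagram~(\ref{eq:stacked-squares}), equal to $\theta_*$, which is exactly what $\DG$ applied to $(H, \theta)$ produces via the corresponding square for $\FG$ and $\FM$ (Lemma~\ref{lemma:comparing-Fs}). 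Strict commutativity therefore follows.

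For the second square involving $\Alg$, I would chase an object $(\cV, T, \Pi)$ around both ways. The $\Alg$-then-$\DC$ leg produces $\bigl(\cat{Top} \tmap{\Pi_* \circ \Gamma} (\cV^T, \Pi E)\cat{-Cat}\bigr)$ by Definition~\ref{DC}. The $\DM$-then-$\Alg$ leg produces $\bigl(\cat{Top} \tmap{\Pi^+} \cV\cat{-Gph}^{T^+}\bigr)$ with $T^+ = \fc{(\cV, \Pi E)} \circ T_*$. The comparison is furnished by Proposition~\ref{prop:fc-VP}(ii), which supplies a canonical isomorphism $(\cV^T, \Pi E)\cat{-Cat} \iso \cV\cat{-Gph}^{T^+}$; under this isomorphism the two versions of $\Pi^+$ coincide essentially by construction, since the definition of $\Pi^+$ in $\DM$ (Definition~\ref{DM}) factors $\Pi_* \circ \Gamma$ through exactly this isomorphism.

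The only nontrivial care required is naturality: the isomorphism of Proposition~\ref{prop:fc-VP}(ii) must vary naturally in $(\cV, T, \Pi)$ so that it assembles into a natural isomorphism between the two composite endofunctors, rather than merely a pointwise one. This is essentially automatic from the construction of the distributive law in Proposition~\ref{prop:fc-VP}(ii), but it involves some bookkeeping because the operad $\Pi E$ itself varies with the object, and morphisms in $\cat{Top} \downarrow \Algd$ only carry operads to operads via the induced action of the underlying functor $H$. I expect this naturality check to be the main (modest) technical obstacle; no deeper difficulty arises because the heavy lifting has already been done in Proposition~\ref{prop:fc-VP}.
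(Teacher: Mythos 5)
Your proposal is correct and follows essentially the same route as the paper: the paper's proof simply says the result follows from Proposition~\ref{eightpointtwo} by the same object-chase as in Lemma~\ref{lemma:comparing-Fs}, which is exactly what you carry out (strict commutativity of the $\Und$ square by unpacking the underlying functor $\Lambda^+$, and the $\Alg$ square via the canonical isomorphism $(\cV^T,\Pi E)\cat{-Cat}\iso\cV\cat{-Gph}^{T^+}$ of Proposition~\ref{prop:fc-VP}(ii)). Your extra attention to naturality of that isomorphism in $(\cV,T,\Pi)$ is a reasonable refinement that the paper leaves implicit.
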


\begin{proof}
Follows from Proposition~\ref{eightpointtwo}: compare the proof of Lemma~\ref{lemma:comparing-Fs}.
\end{proof}

We can now use $\DM$ iteratively to define the monads for incoherent $n$-categories; this is analogous to Definition~\ref{newfourpointtwelve}.

\begin{definition}\label{neweightpointten}

We define a sequence of objects 
\[\left(n\cat{-Gph}, \iT{n}, \iPi{n}\right) \in \cat{Top}\downarr \Algd\]
as follows.
\begin{itemize}
\item $\iT{0}$ is the identity monad on $\cat{Set} = 0\cat{-Gph}$, and
\[\iPi{0}\: \cat{Top} \lra \cat{Set} \iso 0\cat{-Gph}^{\iT{0}}\] 
is the
set-of-points functor, and

\item for $n \geq 1$,
\[
\left(n\cat{-Gph}, \iT{n}, \iPi{n}\right) 
\ = \
\DM\left((n - 1)\cat{-Gph}, \iT{n - 1}, \iPi{n - 1}\right).
\]
\end{itemize}
(We appear to be re-using the notation $\iPi{n}$ once again; this will be
justified shortly.)  We also define, for each $n \geq 1$, a morphism
\[
(U_n, \gamma_n)\: 
\left(n\cat{-Gph}, \iT{n}, \iPi{n}\right) 
\lra 
\left((n - 1)\cat{-Gph}, \iT{n - 1}, \iPi{n - 1}\right)
\]
in $\tdalgd$ as follows.
\begin{itemize}
\item $(U_1, \gamma_1)$ is the morphism of monads corresponding to the
commutative diagram
\[
\psset{unit=0.1cm,labelsep=2pt,nodesep=2pt}
\pspicture(0,-5)(30,40)


\rput(15,35){\rnode{a0}{$\Top$}}  

\rput(-25.5,17.5){$\cat{$(\Set, \iPi{0} E)$-Cat} \iso $}
\rput(44,17.5){$\iso \Set$}

\rput(0,18){\rnode{a1}{$\cat{$1$-Gph}^{\iT{1}}$}}  
\rput(30,18){\rnode{a2}{$\cat{$0$-Gph}^{\iT{0}}$}}  
\rput(0,0){\rnode{a3}{\cat{$1$-Gph}}}  
\rput(30,0){\rnode{a4}{\cat{$0$-Gph;}}}  

\ncline{->}{a0}{a1} \nbput[labelsep=-1pt,npos=0.45]{{\scriptsize $$}} 
\ncline{->}{a0}{a2} \naput[labelsep=2pt,npos=0.53]{{\scriptsize $$}} 

\ncline{->}{a1}{a2} \naput{{\scriptsize $\cat{ob}$}} 
\ncline{->}{a3}{a4} \nbput{{\scriptsize $\cat{ob}$}} 
\ncline{->}{a1}{a3} \nbput{{\scriptsize $$}} 
\ncline{->}{a2}{a4} \naput{{\scriptsize $$}} \nbput{{\scriptsize $\iso$}} 

\endpspicture
\]
note that this is in $\tdalgd$, so we may apply $\DM$.

\item For $n \geq 2$, $(U_n, \gamma_n) = \DM(U_{n - 1}, \gamma_{n - 1})$.
\end{itemize}

\end{definition}

Note that Lemma~\ref{eightpointfour} together with induction tells us that 
\begin{itemize}
\item the category of algebras for each $\iT{n}$ is \cat{$n$-iCat}, and 
\item each functor 
\[
\iPi{n}\: \cat{Top} \lra n\cat{-Gph}^{\iT{n}} \iso n\cat{-iCat}
\]
appearing in the sequence of objects $(n\cat{-Gph}, \iT{n}, \iPi{n})$ is the
same functor $\iPi{n}$ as defined in Section~\ref{sec:trimcat}.
\end{itemize}
Furthermore when morphisms of monads are
viewed as commutative squares, the diagram
\begin{equation}\label{diagfive}
\cdots
\map{(U_2, \gamma_2)} (1\cat{-Gph}, \iT{1}, \iPi{1})
\map{(U_1, \gamma_1)} (0\cat{-Gph}, \iT{0}, \iPi{0})
\end{equation}
becomes the commutative diagram
\begin{equation}\label{newdiagtwenty}
\psset{unit=0.1cm,labelsep=2pt,nodesep=4pt}
\pspicture(0,-4)(100,55)


\rput(0,45){$\cdots$}
\rput(0,10){$\cdots$}

\rput(0,45){\rnode{x0}{\white{$\DM^3\1$}}}

\rput(30,45){\rnode{x1}{
$\left(\pspicture(-7,7.5)(6,18)
\rput(0,15){\rnode{a1}{$\Top$}}  
\rput(0,2){\rnode{a2}{$\cat{$2$-iCat}$}}  
\ncline{->}{a1}{a2} \naput{{\scriptsize $\iPi{2}$}} 

\endpspicture\right)$}}

\rput(65,45){\rnode{x2}{
$\left(\pspicture(-7,7.5)(6,18)
\rput(0,15){\rnode{a1}{$\Top$}}  
\rput(0,2){\rnode{a2}{$\cat{$1$-iCat}$}}  
\ncline{->}{a1}{a2} \naput{{\scriptsize $\iPi{1}$}} 

\endpspicture\right)$}}

\rput(100,45){\rnode{x3}{
$\left(\pspicture(-7,7.5)(6,18)
\rput(0,15){\rnode{a1}{$\Top$}}  
\rput(0,2){\rnode{a2}{$\cat{$0$-iCat}$}}  
\ncline{->}{a1}{a2} \naput{{\scriptsize $\iPi{0}$}} 

\endpspicture\right)$}}

\psset{nodesep=4pt}
\ncline{->}{x0}{x1} \naput{{\scriptsize $U_3$}}
\ncline{->}{x1}{x2} \naput{{\scriptsize $U_2$}}
\ncline{->}{x2}{x3} \naput{{\scriptsize $U_1$}}


\rput(3,10){\rnode{y0}{\white{$\DM^3\1$}}}

\rput(30,10){\rnode{y1}{
$\left(\pspicture(-7,7.5)(6,18)
\rput(0,15){\rnode{a1}{$\Top$}}  
\rput(0,2){\rnode{a2}{$\cat{$2$-Gph}$}}  
\ncline{->}{a1}{a2} \naput{{\scriptsize $\iPi{2}$}} 

\endpspicture\right)$}}

\rput(65,10){\rnode{y2}{
$\left(\pspicture(-7,7.5)(6,18)
\rput(0,15){\rnode{a1}{$\Top$}}  
\rput(0,2){\rnode{a2}{$\cat{$1$-Gph}$}}  
\ncline{->}{a1}{a2} \naput{{\scriptsize $\iPi{1}$}} 

\endpspicture\right)$}}

\rput(100,10){\rnode{y3}{
$\left(\pspicture(-7,7.5)(6,18)
\rput(0,15){\rnode{a1}{$\Top$}}  
\rput(0,2){\rnode{a2}{$\cat{$0$-Gph}$}}  
\ncline{->}{a1}{a2} \naput{{\scriptsize $\iPi{0}$}} 

\endpspicture\right)$.}}

\psset{nodesep=2pt}
\ncline{->}{y0}{y1} \naput{{\scriptsize $U_3$}}
\ncline{->}{y1}{y2} \naput{{\scriptsize $U_2$}}
\ncline{->}{y2}{y3} \naput{{\scriptsize $U_1$}}

\psset{nodesepA=5pt,nodesepB=3pt}
\ncline{->}{x1}{y1} \naput{{\scriptsize $$}} 
\ncline{->}{x2}{y2} \naput{{\scriptsize $$}} 
\ncline{->}{x3}{y3} \naput{{\scriptsize $$}}

\endpspicture
\end{equation}

\begin{lemma} 

Diagram~(\ref{newdiagtwenty}) induces a forgetful morphism
\[
\left(\pspicture(-6.5,7.5)(6.5,18)

\rput(0,15){\rnode{a1}{$\Top$}}  
\rput(0,2){\rnode{a2}{$\cat{$\omg$-Cat}$}}  

\ncline[nodesep=2pt]{->}{a1}{a2} \naput{{\scriptsize $\Pi_\omg$}} 

\endpspicture\right)
\lra
\left(\pspicture(-6.9,7.5)(6.9,18)

\rput(0,15){\rnode{a1}{$\Top$}}  
\rput(0,2){\rnode{a2}{$\cat{$\omg$-Gph}$}}  

\ncline[nodesep=2pt]{->}{a1}{a2} \naput{{\scriptsize $\Pi_\omg$}} 

\endpspicture\right)
\]
with underlying functor
\[\cat{$\omg$-Cat} \lra \cat{$\omg$-Gph}.\]
This functor is monadic.

\end{lemma}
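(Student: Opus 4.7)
The plan is to adapt the strategy used in Section~\ref{strictmnd} for the monadicity of $\omega\cat{-Cat} \lra \omega\cat{-Gph}$ in the strict case, exploiting the fact that $\Alg \: \MNDd \lra \CATd$ preserves limits (being a right adjoint, by Proposition~\ref{prop:FTM-dist}).

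First I would construct the induced morphism in $\cat{Top}/\CATd$. By Definition~\ref{sevenpointnine}, $\omega\cat{-Cat}$ together with $\Piom$ is the limit of the top row of diagram~(\ref{newdiagtwenty}); applying Proposition~\ref{prop:coslice-creates} to the bottom row identifies $\omega\cat{-Gph}$, equipped with the fundamental $\omega$-graph functor, as the corresponding limit. Commutativity of each square in~(\ref{newdiagtwenty}) exhibits the composites $\omega\cat{-Cat} \stackrel{\tr_n}{\lra} n\cat{-iCat} \lra n\cat{-Gph}$ as a cone over the bottom row, and the universal property yields the desired forgetful morphism in $\cat{Top}/\CATd$.

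For monadicity I would mimic the three-step argument of Theorem~\ref{fourpointsixteen} applied to diagram~(\ref{diagfive}), viewed as a diagram in $\tdalgd \iso (\cat{Top}, 1)/\MNDd$. The underlying diagram in $\CATd$ is (up to the canonical isomorphisms of the appropriate $I_n$) the limit cone defining $\omega\cat{-Gph}$. Each morphism $(U_n, \gamma_n)$ is a weak morphism of monads: the base case $(U_1, \gamma_1)$ is visibly weak, and $\DM$ preserves weakness, because in diagram~(\ref{eq:stacked-squares}) both the whiskered $\theta_*$ and the distributive-law square are invertible whenever $\theta$ is. Proposition~\ref{twopointseven} from the Appendix then produces a limit in $\MNDd$, giving a monad $T_\omega^{\tr}$ on $\omega\cat{-Gph}$.

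To identify $\omega\cat{-Cat}$ with the category of algebras for this monad, I would use that $\Alg \: \MNDd \lra \CATd$ preserves the limit; this yields $\omega\cat{-Gph}^{T_\omega^{\tr}} \iso \lim_n n\cat{-iCat} = \omega\cat{-Cat}$, compatibly with the forgetful functors, and hence exhibits $\omega\cat{-Cat} \lra \omega\cat{-Gph}$ as monadic. The main obstacle is verifying that $\DM$ preserves weakness; beyond that, the proof is a reasonably direct transport of the strict-case argument. Alternatively, one could invoke Theorem~F.2.2 of~\cite{lei8} directly, as in the strict case, after checking that each truncation $U_n \: n\cat{-iCat} \lra (n-1)\cat{-iCat}$ is an isofibration.
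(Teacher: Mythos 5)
Your construction of the forgetful morphism via the universal property of the limit is fine, and your closing sentence --- invoking the argument of Theorem~F.2.2 of~\cite{lei8} once each forgetful functor $n\cat{-iCat} \lra n\cat{-Gph}$ is known to be monadic --- is in fact the paper's entire proof. The problem is with your primary route. Proposition~\ref{twopointseven} is a \emph{reflection} result: its hypotheses include a cone in $\MNDwk$ with a specified vertex $(\cl{A},T)$, and it concludes that this \emph{given} cone is a limit. It cannot ``produce a limit in $\MNDd$'' from the sequential diagram alone: there is no prior guarantee that $\MNDd$ (or $\MND$) admits sequential limits, and because the connecting 2-cells $\gamma_n$ are only isomorphisms rather than identities, the endofunctors $\iT{n}$ do not assemble into a strict cone over the diagram of the $n\cat{-Gph}$, so the 1-dimensional universal property of $\omega\cat{-Gph}$ does not hand you an induced endofunctor on the limit. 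In other words, the vertex monad $T_\omega^{\tr}$ you want to feed into Proposition~\ref{twopointseven} is exactly what the monadicity assertion is supposed to deliver; your argument has the dependency the wrong way round. The strict case is organised in the same order: in Section~\ref{four} the monad $\Tom$ is first obtained from the monadicity lemma (proved via~\cite{lei8}), and only afterwards is the cone~(\ref{diagc}) shown to be a limit in Theorem~\ref{fourpointsixteen}.

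The repair is the route you relegate to an afterthought: each $n\cat{-iCat} \lra n\cat{-Gph}$ is monadic by construction (the monads $\iT{n}$ are defined through $\DM$ so that $\cat{Alg}(\iT{n}) \iso n\cat{-iCat}$, using Proposition~\ref{eightpointtwo}), the truncations are isofibrations, and the argument of Theorem~F.2.2 of~\cite{lei8} then shows that the induced functor between the limits is monadic. Once that is in place, the steps you describe --- weakness of the $(U_n,\gamma_n)$, preservation of weakness by $\DM$, Proposition~\ref{twopointseven}, and the fact that $\Alg$ preserves limits --- constitute the proof of the \emph{subsequent} Theorem~\ref{thm:trim-monad-tc}, not of this lemma.
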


\begin{proof}
This follows from the fact that each forgetful functor $n\cat{-iCat} \lra
n\cat{-Gph}$ is monadic, using the same argument as in the proof of
Theorem~F.2.2 of~\cite{lei8}.
\end{proof}

\begin{definition}

Write $\Tom$ for the induced monad on $\omg\cat{-Gph}$.  Thus, $\Tom$ is the
monad for Trimble $\omg$-categories, and we have the functor
\[
\Piom\: \cat{Top} \lra \omg\cat{-Gph}^{\Tom}.
\]

\end{definition}

We will show later that $\Tom$ preserves coproducts and that $\Piom$ preserves
both coproducts and finite products.

For each $n \geq 0$, the truncation morphisms participate in a commutative
square

\[
\psset{unit=0.1cm,labelsep=2pt,nodesep=2pt}
\pspicture(50,55)


\rput(0,45){\rnode{x1}{
$\left(\pspicture(-6,7.5)(6,18)
\rput(0,15){\rnode{a1}{$\Top$}}  
\rput(0,2){\rnode{a2}{$\cat{$\omg$-Cat}$}}  
\ncline{->}{a1}{a2} \naput{{\scriptsize $\Pi_\omg$}} 

\endpspicture\right)$}}

\rput(40,45){\rnode{x2}{
$\left(\pspicture(-6,7.5)(6,18)
\rput(0,15){\rnode{a1}{$\Top$}}  
\rput(0,2){\rnode{a2}{$\cat{$n$-iCat}$}}  
\ncline{->}{a1}{a2} \naput{{\scriptsize $\iPi{n}$}} 

\endpspicture\right)$}}

\rput(0,10){\rnode{y1}{
$\left(\pspicture(-7,7.5)(7,18)
\rput(0,15){\rnode{a1}{$\Top$}}  
\rput(0,2){\rnode{a2}{$\cat{$\omg$-Gph}$}}  
\ncline{->}{a1}{a2} \naput{{\scriptsize $\Pi_\omg$}} 

\endpspicture\right)$}}

\rput(40,10){\rnode{y2}{
$\left(\pspicture(-6,7.5)(6,18)
\rput(0,15){\rnode{a1}{$\Top$}}  
\rput(0,2){\rnode{a2}{$\cat{$n$-Gph}$}}  
\ncline{->}{a1}{a2} \naput{{\scriptsize $\iPi{n}$}} 

\endpspicture\right)$}}

\psset{nodesep=2pt}
\ncline{->}{x1}{x2} \naput{{\scriptsize $\cat{tr}_n$}}
\ncline{->}{y1}{y2} \nbput{{\scriptsize $\cat{tr}_n$}}

\psset{nodesepA=2pt,nodesepB=2pt}
\ncline{->}{x1}{y1}
\ncline{->}{x2}{y2}

\endpspicture
\]
This gives graph truncation the structure of a lax morphism of monads, say
\[
(\tr_n, \tau_n)\: (\omg\cat{-Gph}, \Tom) \lra (n\cat{-Gph}, \iT{n}),
\]
commuting with the morphisms from $(\cat{Top}, 1)$.

We now embark on the proof of one of our two main theorems, characterizing
the monad $\Tom$ for Trimble $\omg$-categories as the terminal coalgebra
for $\DM$.

\begin{theorem} 
\label{thm:trim-monad-tc}

The object $(\omg\cat{-Gph}, \Tom, \Piom)$ of $\cat{Top} \downarr \Algd$,
consisting of the monad $\Tom$ for Trimble $\omg$-categories together with
the fundamental $\omg$-groupoid functor $\Piom$, is the terminal coalgebra
for $\DM$.

\end{theorem}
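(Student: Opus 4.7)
The plan is to apply Ad\'amek's Theorem~\ref{adamek}, following the same pattern as the proof of Theorem~\ref{fourpointsixteen} for the strict $\omega$-category monad. Throughout, I will use the isomorphism $\cat{Top}\downarr\Algd \iso (\cat{Top},1)/\MNDd$ so that the endofunctor $\DM$ may be viewed as acting on coslice objects over $(\cat{Top},1)$ in $\MNDd$.

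First I would verify, by a straightforward induction analogous to Lemma~\ref{lemma:DC-ladder}, that iterating $\DM$ on the terminal object $\1 = \bigl(\cat{Top} \tmap{!} (\1,1)\bigr)$ produces a sequential diagram canonically isomorphic to the truncation tower~(\ref{diagfive}). The base case uses the canonical isomorphism $\DM\1 \iso (\cat{$0$-Gph}, \iT{0}, \iPi{0})$ coming from $\cat{$\1$-Gph} \iso \cat{Set}$, and the inductive step follows by applying $\DM$ levelwise. This reduces the problem to showing that the cone over~(\ref{diagfive}) with apex $(\omega\cat{-Gph}, \Tom, \Piom)$ is a limit cone in $\cat{Top}\downarr\Algd$, and that $\DM$ preserves this limit.

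For the limit assertion, by Proposition~\ref{prop:coslice-creates} it is enough to exhibit the cone in~(\ref{diagfive}) as a limit in $\MNDd$. I would invoke Proposition~\ref{twopointseven} from the appendix, whose three hypotheses to check are: (i) the underlying cone in $\CAT$ of categories of graphs is a limit, which holds by Definition~\ref{sevenpointnine} combined with Proposition~\ref{prop:coslice-creates}; (ii) the underlying diagram sits inside $\CATd$, which follows via the equivalences with globular sets as in the proof of Theorem~\ref{fourpointsixteen}; and (iii) every morphism in the diagram is \emph{weak}. Weakness of the $(\tr_n,\tau_n)$ follows as in the strict case (arrange that each $\tau_n$ is actually an identity via the argument of Theorem~F.2.2 of~\cite{lei8}); weakness of each $(U_n,\gamma_n) = \DM^{n-1}(U_1,\gamma_1)$ reduces to checking that $\DM$ preserves weakness, which is clear from the explicit formula~(\ref{eq:stacked-squares}) since $\theta_\ast$ is an isomorphism whenever $\theta$ is and the bottom square is always an isomorphism.

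For preservation of the limit by $\DM$, I would apply Proposition~\ref{twopointseven} a second time to the image diagram. The underlying graph cone is preserved in $\CAT$ because $\DG$ does so (essentially by the argument already used in Theorem~\ref{threepointten}), and lives in $\CATd$ by Lemma~\ref{lemma:DC-restricts}; weakness is again preserved by $\DM$ as above. Once both conditions of Ad\'amek's Theorem hold, we conclude that $(\omega\cat{-Gph}, \Tom, \Piom)$ is the terminal $\DM$-coalgebra. The main technical obstacle will be verifying the $\CATd$ hypothesis in the presence of the weak-enrichment operad $\Pi E$ --- here one must check that the forgetful functor $(\cV,P)\cat{-Cat} \lra \cV\cat{-Gph}$ creates the relevant small coproducts and finite products compatibly with the operad action, a fact that reduces to routine computation using distributivity of $\cV$ together with the explicit description of $\Pi^+$ given in Definition~\ref{DC}. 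As a bonus, this proof simultaneously establishes the deferred Theorem~\ref{thm:trim-cat-tc}: applying $\Alg$ (which preserves limits by Proposition~\ref{prop:FTM-adjns}) to the limit cone, and using the commutative square of Lemma~\ref{eightpointfour}, yields that $(\cat{Top}\tmap{\Piom}\omega\cat{-Cat})$ is the terminal coalgebra for $\DC$.
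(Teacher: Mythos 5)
Your overall strategy is exactly the paper's: pass to $(\Top,1)/\MNDd$, identify the iterates $\DM^n\1$ with the truncation tower by induction, establish the limit in $\MNDd$ via Proposition~\ref{twopointseven} (checking weakness, the $\CATd$ condition, and the underlying limit in $\CAT$), and repeat for preservation. However, there is one genuine step you have skipped, which the paper treats as a separate part of the argument (its part (A)): you must show that the proposed apex $(\omega\cat{-Gph},\Tom,\Piom)$ is actually an \emph{object of} $\cat{Top}\downarr\Algd$, i.e.\ that the already-defined functor $\Piom\:\Top\lra\omega\cat{-Cat}$ preserves finite products and small coproducts. Creation of limits by the forgetful functor $(\Top,1)/\MNDd\lra\MNDd$ only tells you that \emph{some} limit exists upstairs, with structure map the one induced by the universal property in $\MNDd$; it does not by itself identify that structure map with the $\Piom$ of Definition~\ref{sevenpointnine}, which was constructed as a factorisation through a limit taken merely in $\Top/\CAT$. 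The paper closes this gap by observing that the cone of functors $\iPi{n}\:\Top\lra n\cat{-iCat}$ lies in $\CATd$, that the limit~(\ref{diagten}) of the categories $n\cat{-iCat}$ is a limit in $\CATd$ as well as $\CAT$ (since $\Algd$ preserves limits), and that $\Piom$ makes all triangles commute and so must coincide with the factorisation induced in $\CATd$; hence $\Piom\in\CATd$. Without some such argument your cone is not known to live in the category in which you are claiming it is terminal.

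Two smaller points. First, your hypothesis (i) for Proposition~\ref{twopointseven} cites Definition~\ref{sevenpointnine}, but the underlying cone in $\CAT$ of the monad diagram is the cone of \emph{graph} categories $\omega\cat{-Gph}\lra n\cat{-Gph}$, whose limit status comes from Definition~\ref{threepointfive} and Theorem~\ref{thm:gph-tc}, not from the definition of $\omega\cat{-Cat}$. Second, the deduction of Theorem~\ref{thm:trim-cat-tc} is not quite the one-line bonus you suggest: one must work in $\Top/\CATp$ rather than $\Top/\CATd$, and the paper needs the reflection results of the appendix together with the fact that the monad-level limit holds in $\MND$ (not only $\MNDd$) before applying $\Alg$; this is why the paper gives that deduction as a separate proof.
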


\begin{proof}

We use Ad\'amek's theorem.  We need to consider the limit in $\tdalgd$ of
\begin{equation}\label{diagsix}
\psset{unit=0.1cm,labelsep=2pt,nodesep=2pt}
\pspicture(0,-1)(80,5)


\rput(0,0){$\cdots$}

\rput(0,0){\rnode{x0}{\white{$\DG^3\1$}}}
\rput(30,0){\rnode{x1}{$\DM^3\1$}}
\rput(55,0){\rnode{x2}{$\DM^2\1$}}
\rput(80,0){\rnode{x3}{$\DM\1$}}

\psset{nodesep=4pt}
\ncline{->}{x0}{x1} \naput{{\scriptsize $\DG^3!$}}
\ncline{->}{x1}{x2} \naput{{\scriptsize $\DG^2!$}}
\ncline{->}{x2}{x3} \naput{{\scriptsize $\DG!$}}

\endpspicture
\end{equation}

\vv{1em}

\noi where $\1$ is the terminal object $(\1, 1, !)$ of $\tdalgd$. (We also continue to write the terminal object of $\CAT$ as $\1$.)  We need to show
\begin{enumerate}
 \item the limit is $(\cat{$\omg$-Gph}, \Tom, \Piom)$, and
\item the limit is preserved by $\DM$.
\end{enumerate}

First we note that the above diagram (\ref{diagsix}) is isomorphic to our diagram of $n$-dimensional monads and truncations; formally, there are canonical isomorphisms $(I_n, \iota_n)_{n \geq 0}$ in $\tdalgd$ making the following diagram commute
\[
\psset{unit=0.1cm,labelsep=2pt,nodesep=2pt}
\pspicture(100,45)


\rput(0,40){$\cdots$}
\rput(0,10){$\cdots$}

\rput(0,40){\rnode{x0}{\white{$\DM^3\1$}}}
\rput(30,40){\rnode{x1}{$\DM^3\1$}}
\rput(65,40){\rnode{x2}{$\DM^2\1$}}
\rput(100,40){\rnode{x3}{$\DM\1$}}

\psset{nodesep=4pt}
\ncline{->}{x0}{x1} \naput{{\scriptsize $\DM^3!$}}
\ncline{->}{x1}{x2} \naput{{\scriptsize $\DM^2!$}}
\ncline{->}{x2}{x3} \naput{{\scriptsize $\DM!$}}

\rput(3,10){\rnode{y0}{\white{$\DM^3\1$}}}

\rput(30,10){\rnode{y1}{
$\left(\pspicture(-8,7.5)(7,18)
\rput(0,15){\rnode{a1}{$\Top$}}  
\rput(0,2){\rnode{a2}{$\cat{$2$-Gph}^{\iT{2}}$}}  
\ncline{->}{a1}{a2} \naput{{\scriptsize $\iPi{2}$}} 

\endpspicture\right)$}}

\rput(65,10){\rnode{y2}{
$\left(\pspicture(-8,7.5)(7,18)
\rput(0,15){\rnode{a1}{$\Top$}}  
\rput(0,2){\rnode{a2}{$\cat{$1$-Gph}^{\iT{1}}$}}  
\ncline{->}{a1}{a2} \naput{{\scriptsize $\iPi{1}$}} 

\endpspicture\right)$}}

\rput(100,10){\rnode{y3}{
$\left(\pspicture(-8,7.5)(7,18)
\rput(0,15){\rnode{a1}{$\Top$}}  
\rput(0,2){\rnode{a2}{$\cat{$0$-Gph}^{\iT{0}}$}}  
\ncline{->}{a1}{a2} \naput{{\scriptsize $\iPi{0}$}} 

\endpspicture\right)$}}

\psset{nodesep=2pt}
\ncline{->}{y0}{y1} \naput{{\scriptsize $U_3$}}
\ncline{->}{y1}{y2} \naput{{\scriptsize $U_2$}}
\ncline{->}{y2}{y3} \naput{{\scriptsize $U_1$}}

\psset{nodesepA=5pt,nodesepB=3pt}
\ncline{->}{x1}{y1} \naput{{\scriptsize $(I_2,\iota_2)$}} \nbput{{\scriptsize $\iso$}}
\ncline{->}{x2}{y2} \naput{{\scriptsize $(I_1,\iota_1)$}} \nbput{{\scriptsize $\iso$}}
\ncline{->}{x3}{y3} \naput{{\scriptsize $(I_0,\iota_0)$}} \nbput{{\scriptsize $\iso$}}

\endpspicture
\]
with the underlying functors $I_n$ as in the proof of Theorem~\ref{thm:gph-tc}, as usual.  Also as usual, this is true by a straightforward induction: we have
\[
\DM\1 = (\cat{$\1$-Gph}, 1, !) \iso (\cat{$0$-Gph},1, !) 
\]
and as the rightmost square of the diagram commutes, the whole diagram commutes. 

This shows that the the limit we need to take is over the following diagram in $\tdalgd$
\begin{equation}\label{diagseven} 
\cdots 
\map{(U_3, \gamma_3)} (\cat{$2$-Gph}, \iT{2}, \iPi{2}) 
\map{(U_2, \gamma_2)} (\cat{$1$-Gph}, \iT{1}, \iPi{1}) 
\map{(U_1, \gamma_0)} (\cat{$0$-Gph}, \iT{0}, \iPi{0})
\end{equation}
so our aim is to show that the following cone is a limit in $\tdalgd$.
\begin{equation}\label{diageight}
\psset{unit=0.095cm,labelsep=2pt,nodesep=2pt}
\pspicture(0,-7)(114,24)

\rput(20,20){\rnode{a}{$(\cat{$\omg$-Gph}, \Tom, \Piom)$}}

\rput(0,0){$\cdots$}

\rput(0,0){\rnode{x3}{\white{$\DG^3\1$}}}
\rput(30,0){\rnode{x2}{$(\cat{$2$-Gph}, \iT{2}, \iPi{2})$}}
\rput(70,0){\rnode{x1}{$(\cat{$1$-Gph}, \iT{1}, \iPi{1})$}}
\rput(108,0){\rnode{x0}{$(\cat{$0$-Gph}, \iT{0}, \iPi{0})$}}

\psset{nodesep=2pt}
\ncline{->}{x3}{x2} \naput{{\scriptsize $$}}
\ncline{->}{x2}{x1} \nbput{{\scriptsize $$}}
\ncline{->}{x1}{x0} \nbput{{\scriptsize $$}}

\ncline{->}{a}{x0} \naput[npos=0.65,labelsep=1pt]{{\scriptsize $(\mbox{tr}_0, \tau_0)$}}
\ncline{->}{a}{x1} \nbput[npos=0.55,labelsep=1pt]{{\scriptsize $(\mbox{tr}_1, \tau_1)$}}
\ncline{->}{a}{x2} \nbput[npos=0.65,labelsep=1pt]{{\scriptsize $(\mbox{tr}_2, \tau_2)$}}
\ncline{->}{a}{x3} \naput{{\scriptsize $$}}

\endpspicture
\end{equation}
So far we only know it is a cone in $\tdalg$.  We make use of the isomorphism
\[\tdalgd \iso (\Top,1)/\MNDd.\]
Now the forgetful functor
\[(\Top,1)/\MNDd \tra \MNDd\]
creates limits (Proposition \ref{onepointten}) so it is enough for us to show
\numAlph
\begin{enumerate}
 \item the cone above is in fact in $(\Top,1)/\MNDd$, not just $(\Top,1)/\MND$, and
\item the underlying cone is a limit in $\MNDd$.
\end{enumerate}
The only complicated aspect of this is checking the correct distributivity/(co)limit preserving conditions.  We proceed in steps.
\numarabic
\begin{enumerate}
 \item First note that by definition the diagram (\ref{diagseven}) whose limit we are taking is in $\tdalgd$, so in particular the underlying diagram
\[ 
\cdots 
\map{(U_3, \gamma_3)} (\cat{$2$-Gph}, \iT{2}) 
\map{(U_2, \gamma_2)} (\cat{$1$-Gph}, \iT{1}) 
\map{(U_1, \gamma_1)} (\cat{$0$-Gph}, \iT{0})
\]
is in $\MNDd$.

\item Also note that the underlying cone of diagram (\ref{diageight}) is in
  $\MNDwk$, that is, all the truncation maps are weak.  This can be seen in
  the same way as the analogous result for the strict case (see proof of
  Theorem~\ref{fourpointsixteen})---the map $!$ is certainly weak, and
  $\DM$ is seen to preserve weakness just as for $\FM$. 

\item Morever the underlying cone  in $\CAT$ of diagram (\ref{diageight}) is
\[
\psset{unit=0.095cm,labelsep=2pt,nodesep=2pt}
\pspicture(0,-4)(114,22)

\rput(20,20){\rnode{a}{\cat{$\omg$-Gph}}}

\rput(0,0){$\cdots$}

\rput(0,0){\rnode{x3}{\white{$\DG^3\1$}}}
\rput(30,0){\rnode{x2}{$\cat{$2$-Gph}$}}
\rput(70,0){\rnode{x1}{$\cat{$1$-Gph}$}}
\rput(108,0){\rnode{x0}{$\cat{$0$-Gph}$}}

\psset{nodesep=2pt}
\ncline{->}{x3}{x2} \naput{{\scriptsize $$}}
\ncline{->}{x2}{x1} \nbput{{\scriptsize ${U_2}$}}
\ncline{->}{x1}{x0} \nbput{{\scriptsize ${U_1}$}}

\ncline{->}{a}{x0} \naput{{\scriptsize $$}}
\ncline{->}{a}{x1} \naput{{\scriptsize $$}}
\ncline{->}{a}{x2} \naput{{\scriptsize $$}}
\ncline{->}{a}{x3} \naput{{\scriptsize $$}}

\endpspicture
\]
so we know it is a limit in $\CAT$ and at least a cone in $\CATd$.

\item So we can apply Proposition~\ref{twopointseven} to conclude that the vertex $(\cat{$\omg$-Gph}, \Tom)$ is in $\MNDd$, and that the cone 
\[
\psset{unit=0.095cm,labelsep=2pt,nodesep=2pt}
\pspicture(0,-5)(114,22)

\rput(20,20){\rnode{a}{$(\cat{$\omg$-Gph}, \Tom)$}}

\rput(0,0){$\cdots$}

\rput(0,0){\rnode{x3}{\white{$\DG^3\1$}}}
\rput(30,0){\rnode{x2}{$(\cat{$2$-Gph}, \iT{2})$}}
\rput(70,0){\rnode{x1}{$(\cat{$1$-Gph}, \iT{1})$}}
\rput(108,0){\rnode{x0}{$(\cat{$0$-Gph}, \iT{0})$}}

\psset{nodesep=2pt}
\ncline{->}{x3}{x2} \naput{{\scriptsize $$}}
\ncline{->}{x2}{x1} \nbput{{\scriptsize $$}}
\ncline{->}{x1}{x0} \nbput{{\scriptsize $$}}

\ncline{->}{a}{x0} \naput[npos=0.65,labelsep=1pt]{{\scriptsize $(\mbox{tr}_0, \tau_0)$}}
\ncline{->}{a}{x1} \nbput[npos=0.55,labelsep=1pt]{{\scriptsize $(\mbox{tr}_1, \tau_1)$}}
\ncline{->}{a}{x2} \nbput[npos=0.65,labelsep=1pt]{{\scriptsize $(\mbox{tr}_2, \tau_2)$}}
\ncline{->}{a}{x3} \naput{{\scriptsize $$}}

\endpspicture
\]
is a limit in both $\MNDd$ and $\MND$.  So we have shown (B).

\end{enumerate}

We now prove (A).  We already know that the underlying cone is in $\MNDd$ and that the sequential diagram is in $(\Top,1)/\MNDd$, so we just need to show that the vertex $(\cat{$\omg$-Gph}, \Tom, \Piom)$ is in $(\Top,1)/\MNDd$, and the only missing fact is that the functor 
\[\Top \map{\Piom} \cat{$\omg$-Cat}\] is in $\CATd$ not just $\CAT$.  We do
this by showing it is induced by the universal property of
$\cat{$\omg$-Cat}$ in $\CATd$. 

Recall that $\Alg$ and $\Algd$ each have a right adjoint and so preserve limits (Propositions~\ref{twopointthree} and \ref{twopointsix}); thus the diagram of categories of algebras
\begin{equation}\label{diagten}
\psset{unit=0.093cm,labelsep=2pt,nodesep=3pt}
\pspicture(0,3)(122,24)


\rput(20,20){\rnode{a}{\cat{$\omg$-Cat}}}

\rput(0,0){$\cdots$}

\rput(0,0){\rnode{x3}{\white{$\DG^3\1$}}}
\rput(30,0){\rnode{x2}{$\cat{$2$-iCat}$}}
\rput(70,0){\rnode{x1}{$\cat{$1$-iCat}$}}
\rput(108,0){\rnode{x0}{$\cat{$0$-iCat}$}}

\psset{nodesep=2pt}
\ncline{->}{x3}{x2} \naput{{\scriptsize $$}}
\ncline{->}{x2}{x1} \naput{{\scriptsize $$}}
\ncline{->}{x1}{x0} \naput{{\scriptsize $$}}

\ncline{->}{a}{x0} \naput{{\scriptsize $$}}
\ncline{->}{a}{x1} \naput{{\scriptsize $$}}
\ncline{->}{a}{x2} \naput{{\scriptsize $$}}
\ncline{->}{a}{x3} \naput{{\scriptsize $$}}

\endpspicture
\end{equation}

\vv{1em}

\noi is a limit in both $\CATd$ and $\CAT$ (as usual we need the latter for the next proof); compare with Section~\ref{batanin}.  Furthermore we have another cone in $\CATd$ coming from the underlying sequential diagram, that is we have
\[
\psset{unit=0.093cm,labelsep=2pt,nodesep=3pt}
\pspicture(0,-5)(122,24)


\rput(20,20){\rnode{a}{\cat{Top}}}

\rput(0,0){$\cdots$}

\rput(0,0){\rnode{x3}{\white{$\DG^3\1$}}}
\rput(30,0){\rnode{x2}{$\cat{$2$-iCat}$}}
\rput(70,0){\rnode{x1}{$\cat{$1$-iCat}$}}
\rput(108,0){\rnode{x0}{$\cat{$0$-iCat}$}}

\psset{nodesep=2pt}
\ncline{->}{x3}{x2} \naput{{\scriptsize $$}}
\ncline{->}{x2}{x1} \naput{{\scriptsize $$}}
\ncline{->}{x1}{x0} \naput{{\scriptsize $$}}

\ncline{->}{a}{x0} \naput{{\scriptsize $$}}
\ncline{->}{a}{x1} \naput{{\scriptsize $$}}
\ncline{->}{a}{x2} \naput{{\scriptsize $$}}
\ncline{->}{a}{x3} \naput{{\scriptsize $$}}

\endpspicture
\]
Now the functor $\Piom$ makes everything commute (see diagram (\ref{diagnine}), Definition~\ref{sevenpointnine}), so must be the factorisation induced by the universal property of the above limit in $\CATd$. Thus we see that $\Piom$ is also in $\CATd$ as required.

This completes the proof that (\ref{diageight}) is a limit;  it remains to prove that this limit is preserved by 
\[\DM \: (\Top,1)/\MNDd \tra (\Top,1)/\MNDd.\]
Since the forgetful functor
\[(\Top,1)/\MNDd \tra \MNDd\]
reflects limits, it is again enough to show that the underlying cone in $\MNDd$ is a limit.  We use Proposition~\ref{twopointseven} again.  We know:

\begin{enumerate}
 \item The diagram is in $\MNDwk$ as $\DM$ preserves weakness.
\item The underlying cone is in $\CATd$ as it underlies a cone in $\MNDd$.
\item The underlying cone is a limit in $\CAT$
\[
\psset{unit=0.095cm,labelsep=2pt,nodesep=2pt}
\pspicture(114,24)

\rput(20,20){\rnode{a}{$\cat{$\omg$-Gph} \iso \cat{($\omg$-Gph)-Gph}$}}

\rput(0,0){$\cdots$}

\rput(0,0){\rnode{x3}{\white{$\DG^3\1$}}}
\rput(30,0){\rnode{x2}{$\cat{$2$-Gph}$}}
\rput(70,0){\rnode{x1}{$\cat{$1$-Gph}$}}
\rput(108,0){\rnode{x0}{$\cat{$0$-Gph}$}}

\psset{nodesep=2pt}
\ncline{->}{x3}{x2} \naput{{\scriptsize $$}}
\ncline{->}{x2}{x1} \nbput{{\scriptsize ${U_2}$}}
\ncline{->}{x1}{x0} \nbput{{\scriptsize ${U_1}$}}

\ncline{->}{a}{x0} \naput{{\scriptsize $$}}
\ncline{->}{a}{x1} \naput{{\scriptsize $$}}
\ncline{->}{a}{x2} \naput{{\scriptsize $$}}
\ncline{->}{a}{x3} \naput{{\scriptsize $$}}

\endpspicture
\]

\end{enumerate}
so the cone in question is a limit in $\MNDd$ (and $\MND$) as required.
\end{proof}

We now deduce our other main theorem: that the category $\omg\cat{-Cat}$
of Trimble $\omg$-categories, together with the fundamental
$\omg$-groupoid functor $\Piom$, is the terminal coalgebra for $\DC$.

\begin{prfof}{Theorem~\ref{thm:trim-cat-tc}}
We use Ad\'amek's Theorem.  We have already seen (Lemma~\ref{lemma:DC-ladder}) that it suffices to show
that $\left(\cat{Top} \tmap{\Piom} \omg\cat{-Cat}\right)$ is the limit in
$\cat{Top}/\CATp$ of the diagram
\begin{equation}
\label{eq:DC-sequence}
\cdots 
\map{U_3}
\left(\pspicture(-6,7.5)(6,18)

\rput(0,15){\rnode{a1}{$\Top$}}  
\rput(0,2){\rnode{a2}{$\cat{$2$-iCat}$}}  

\ncline[nodesep=2pt]{->}{a1}{a2} \naput{{\scriptsize $\iPi{2}$}} 

\endpspicture\right)
\map{U_2}
\left(\pspicture(-6,7.5)(6,18)

\rput(0,15){\rnode{a1}{$\Top$}}  
\rput(0,2){\rnode{a2}{$\cat{$1$-iCat}$}}  

\ncline[nodesep=2pt]{->}{a1}{a2} \naput{{\scriptsize $\iPi{1}$}} 

\endpspicture\right)
\map{U_1}
\left(\pspicture(-6,7.5)(6,18)

\rput(0,15){\rnode{a1}{$\Top$}}  
\rput(0,2){\rnode{a2}{$\cat{$0$-iCat}$}}  

\ncline[nodesep=2pt]{->}{a1}{a2} \naput{{\scriptsize $\iPi{0}$}} 

\endpspicture\right)
\end{equation}
and that $\DC$ preserves this limit.

As the functor
\[\Top/\CATp \tra \CATp\]
reflects limits, it suffices to show that the cone
\begin{equation}\label{newdiagtwentytwo}
\psset{unit=0.093cm,labelsep=2pt,nodesep=3pt}
\pspicture(0,-5)(122,24)


\rput(20,20){\rnode{a}{\cat{$\omg$-Cat}}}

\rput(0,0){$\cdots$}

\rput(0,0){\rnode{x0}{\white{$\DG^3\1$}}}
\rput(22,0){\rnode{x1}{$\cat{$n$-iCat}$}}
\rput(60,0){\rnode{x2}{$\cat{$(n-1)$-iCat}$}}
\rput(92,0){\rnode{x3}{$\cdots$}}
\rput(115,0){\rnode{x4}{$\cat{$0$-iCat}$}}

\psset{nodesep=2pt}
\ncline{->}{x0}{x1} \naput{{\scriptsize $$}}
\ncline{->}{x1}{x2} \naput{{\scriptsize $$}}
\ncline[nodesepB=10pt]{->}{x2}{x3} \naput{{\scriptsize $$}}
\ncline[nodesepA=10pt]{->}{x3}{x4} \naput{{\scriptsize $$}}

\ncline{->}{a}{x0} \naput{{\scriptsize $$}}
\ncline{->}{a}{x1} \naput{{\scriptsize $$}}
\ncline{->}{a}{x2} \naput{{\scriptsize $$}}
\ncline{->}{a}{x4} \naput{{\scriptsize $$}}

\endpspicture
\end{equation}
is a limit in $\CATp$.  Further, as
\[\CATp \hra \CAT\]
reflects limits, it suffices to show that this is a limit in \CAT, which is true as $\Alg$ preserves limits (in fact this limit was exhibited in the previous proof).

To show that $\DC\: \cat{Top}/\CATp \lra \cat{Top}/\CATp$ preserves this limit,
we use the commutative diagram 
\[\psset{unit=0.08cm,labelsep=0pt,nodesep=3pt}
\pspicture(0,-5)(90,29)


\rput(0,25){\rnode{a1}{$(\Top,1)/\MNDd$}} 
\rput(49,25){\rnode{a2}{$(\Top,1)/\MNDd$}} 
\rput(90,25){\rnode{a3}{$\MND$}} 

\rput(0,0){\rnode{b1}{$\Top/\CATd$}}   
\rput(49,0){\rnode{b2}{$\Top/\CATd$}}  
\rput(90,0){\rnode{b3}{$\CAT$}}  

\psset{nodesep=3pt,labelsep=2pt,arrows=->}
\ncline{a1}{a2}\naput{{\scriptsize $\DM$}} 
\ncline{a2}{a3}\naput{{\scriptsize $U$}} 

\ncline{b1}{b2}\nbput{{\scriptsize $\DC$}} 
\ncline{b2}{b3}\nbput{{\scriptsize $U$}} 

\ncline{a1}{b1}\nbput{{\scriptsize $\Alg$}} 
\ncline{a2}{b2}\nbput{{\scriptsize $\Alg$}} 
\ncline{a3}{b3}\naput{{\scriptsize $\Alg$}} 

\endpspicture\]
(Lemma~\ref{lemma:comparing-Ds}).  The image of the limit
cone~(\ref{newdiagtwentytwo}) under $\DC$ is a cone in $\cat{Top}/\CATd$, which we
wish to show is a limit cone in $\cat{Top}/\CATp$.  By the usual Propositions~\ref{prop:coslice-creates} and~\ref{prop:little-refl}, it is
enough to show that \emph{its} underlying cone in $\CAT$ is a limit.  This
cone is 

\[
 U\DC\left(\pspicture(-27,6.5)(28,19)
\rput(0,7){
$\left(\pspicture(-6,7.5)(6,18)
\rput(0,15){\rnode{a1}{$\Top$}}  
\rput(0,3){\rnode{a2}{$\cat{$\omg$-Cat}$}}  
\ncline[nodesepB=2pt]{->}{a1}{a2} \naput{{\scriptsize $\Pi_\omg$}} 

\endpspicture\right)
\map{\tr_n}
\left(\pspicture(-6,7.5)(6,18)
\rput(0,15){\rnode{a1}{$\Top$}}  
\rput(0,3){\rnode{a2}{$\cat{$n$-iCat}$}}  
\ncline[nodesepB=2pt]{->}{a1}{a2} \naput{{\scriptsize $\iPi{n}$}} 

\endpspicture\right)$}

\endpspicture
\right)_{n \geq 0}
\]

\[ \iso U \DC \Alg \left(\pspicture(-30,6.5)(31,19)
\rput(0,7){
$\left(\pspicture(-9,7.5)(8.5,18)
\rput(0,15){\rnode{a1}{$\Top$}}  
\rput(0,3){\rnode{a2}{$\cat{$\omg$-Gph}^{T_\omg}$}}  
\ncline[nodesepB=0pt]{->}{a1}{a2} \naput{{\scriptsize $\Pi_\omg$}} 

\endpspicture\right)
\map{(\tr_n, \tau_n)}
\left(\pspicture(-7.5,7.5)(7,18)
\rput(0,15){\rnode{a1}{$\Top$}}  
\rput(0,3){\rnode{a2}{$\cat{$n$-Gph}^{\iT{n}}$}}  
\ncline[nodesepB=-1pt]{->}{a1}{a2} \naput{{\scriptsize $\iPi{n}$}} 

\endpspicture\right)$}

\endpspicture
\right)_{n \geq 0}
\]
But $U\DC\Alg \iso \Alg U \DM$, and in the proof of
Theorem~\ref{thm:trim-monad-tc}, we showed that
\[
 U\DM\left(\pspicture(-27,6.5)(28,19)
\rput(0,7){
$\left(\pspicture(-6,7.5)(6,18)
\rput(0,15){\rnode{a1}{$\Top$}}  
\rput(0,3){\rnode{a2}{$\cat{$\omg$-Cat}$}}  
\ncline[nodesepB=2pt]{->}{a1}{a2} \naput{{\scriptsize $\Pi_\omg$}} 

\endpspicture\right)
\map{\tr_n}
\left(\pspicture(-6,7.5)(6,18)
\rput(0,15){\rnode{a1}{$\Top$}}  
\rput(0,3){\rnode{a2}{$\cat{$n$-iCat}$}}  
\ncline[nodesepB=2pt]{->}{a1}{a2} \naput{{\scriptsize $\iPi{n}$}} 

\endpspicture\right)$}

\endpspicture
\right)_{n \geq 0}
\]
is a limit cone in $\MND$.  Since $\Alg\: \MND \lra \CAT$ preserves limits
(Proposition~\ref{prop:FTM-adjns}), the result follows.
\end{prfof}


Finally we characterise $\Tom$, the monad for Trimble weak $\omg$-categories, directly.  First we define a series of monads $P_k$ on $\wGph$ inductively as follows; the idea is that $P_k$ is the monad for ``composition along bounding $k$-cells''.

\begin{itemize}
 \item $P_0 = \fc{(\wGph, \Piom E)}$
\item for $k \geq 1$, $P_k = {(P_{k-1})}_* = {\big(\fc{(\wGph, \Piom E)}\big)}_{k*}$
\end{itemize}
where the subscript $k*$ is shorthand for $*$ applied $k$ times.

Now, we cannot define $\Tom$ globally using these monads, as it would need to be an ``infinite composite''.  However, at each dimension $n$, the action of $\Tom$ on $n$-cells is given by a finite composite of the $P_k$.

\begin{theorem}
 $\Tom$ is given as follows.  For an $\omg$-graph $A$ we have
\begin{itemize}
 \item $(PA)_0 = A_0$, and
\item for $n \geq 1$, $(\Tom A)_n = (P_0 P_1 \cdots P_{n-1} A)_n$.
\end{itemize}
\end{theorem}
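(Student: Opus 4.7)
The plan is to prove the stronger statement that $\tr_n(P_0 P_1 \cdots P_{n-1} A) \iso \iT{n}(\tr_n A)$ as $n$-graphs, naturally in $A$; the formula for $(\Tom A)_n$ then follows by taking $n$-cells. Specifically, Theorem~\ref{thm:trim-monad-tc} supplies the weak monad morphism $(\tr_n,\tau_n)\colon (\wGph,\Tom) \tra (n\cat{-Gph},\iT{n})$, so $\tr_n\circ\Tom \iso \iT{n}\circ\tr_n$, which yields $(\Tom A)_n \iso (\iT{n}\,\tr_n A)_n$ at $n$-cells. The edge case $n=0$ is immediate since $\iT{0}$ is the identity monad on $\Set$.

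For the induction, I would derive two parallel recursive formulas. Unfolding Definition~\ref{neweightpointten} via the definition of $\DM$ gives
\[ \iT{n} = \fc{((n-1)\cat{-Gph},\,\iPi{n-1} E)} \circ (\iT{n-1})_* \]
for $n \geq 1$. On the other side, writing $Q_n := P_0 P_1 \cdots P_{n-1}$ and using $P_k = (P_0)_{k*}$ together with the elementary identity $F_* \circ G_* = (F\circ G)_*$, one verifies inductively that $P_1 P_2 \cdots P_{n-1} = (Q_{n-1})_*$, and hence $Q_n = P_0 \circ (Q_{n-1})_*$. The two recursions now have the same shape, with $P_0$ mirroring the outer free-category monad and $(Q_{n-1})_*$ mirroring $(\iT{n-1})_*$. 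The inductive hypothesis $\tr_{n-1}(Q_{n-1} B) \iso \iT{n-1}(\tr_{n-1} B)$, applied to each hom $\omega$-graph $B = A(x,y)$ and using $(\tr_n A)(x,y) = \tr_{n-1}(A(x,y))$, then delivers $\tr_n((Q_{n-1})_* A) \iso (\iT{n-1})_*(\tr_n A)$.

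The remaining compatibility, which I expect to be the main technical obstacle, is that for any $\omega$-graph $X$ the truncation $\tr_n(P_0 X)$ is naturally isomorphic to $\fc{((n-1)\cat{-Gph},\,\iPi{n-1} E)}(\tr_n X)$. This reduces to checking that the explicit coproduct-of-products formula of Proposition~\ref{prop:VP-kelly} commutes with $n$-truncation: products and coproducts of $\omega$-graphs are computed levelwise so they restrict correctly under $\tr_n$, and the operad $\Piom E$ interacts with $\tr_n$ so that its $(n-1)$-cell structure is precisely $\iPi{n-1} E$. The latter follows from the limit-cone description of $\Piom$ in Definition~\ref{sevenpointnine}: at each arity $k$, $(\iPi{n-1} E)(k) = \iPi{n-1}(E(k)) = \tr_{n-1}(\Piom(E(k))) = \tr_{n-1}(\Piom E)(k)$. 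Composing these two compatibilities closes the induction and, by taking $n$-cells, gives the asserted formula for $\Tom$.
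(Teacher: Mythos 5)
Your proposal is correct and takes essentially the same approach as the paper: both pass via the limit cone to $(\iT{n}\,\tr_n A)_n$, match the recursion defining $\iT{n}$ against the composite $P_0P_1\cdots P_{n-1}$, and close the argument using $\tr_r(\Piom E)=\iPi{r}E$ (Remark~\ref{sixprecise}) together with the fact that truncation preserves the products and coproducts in the free-category formula. The paper merely presents your induction in unrolled form, comparing each factor $P_k$ with its $n$-dimensional counterpart termwise on $n$-cells rather than recursing on the partial composites.
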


\begin{proof}
 By the limit cone (\ref{diageight}) we know
\[(\Tom A)_n = (\iT{n} \tr_n A)_n\]
and by the definition of $\iT{n}$ (Definition~\ref{neweightpointten}) we know
\begin{itemize}
 \item $\iT{0} = \id$
\item $\iT{n} = (\iT{n-1})^+ = \fc{(\cat{$(n-1)$-Gph}, \iPi{n-1} E)} \circ {(\iT{n-1})}_*$.
\end{itemize}
Unravelling this we get
\[\iT{n} = P^{(n)}_0 P^{(n)}_1 \cdots P^{(n)}_{n-1}\]
where $P^{(n)}_k = (\fc{(\cat{$(n-k+1)$-Gph}, \iPi{n-k+1} E)})_{k*}$.

Thus we need to show that 
\[(P_0 P_1 \cdots P_{n-1} A)_n = (P^{(n)}_0 P^{(n)}_1 \cdots P^{(n)}_{n-1} \tr_n(A))_n\]
Thus it suffices to show that for an $n$-graph $B$ and any $k < n$
\[(P_k B)_n = {(P^{(n)}_k B)}_n\]
that is, on $n$-cells
\[{\big(\fc{(\wGph, \Piom E)}\big)}_{k*} = \big(\fc{(\cat{$(n-k+1)$-Gph}, \iPi{n-k+1} E)}\big)_{k*}\]
i.e.\ on $(n-k)$-cells
\[{\fc{(\wGph, \Piom E)}} = \big(\fc{(\cat{$(n-k+1)$-Gph}, \iPi{n-k+1} E}).\]
This follows from Remark~\ref{sixprecise}, which tells us that 
\[\tr_r(\Piom E) = \iPi{r} E.\]
\end{proof}

Finally note that in the language of \cite{che16} we now have an iterative theory of (incoherent)
$n$-categories parametrised by the series of operads 
\[\iPi{0} E, \ \iPi{1} E, \ \iPi{2} E, \ldots .\]
This emphasises the way in which the single operad $E$ is used to parametrise every type of composition in Trimble's theory of higher-dimensional categories.


\appendix

\section{Appendix: Limits of monads}
\label{sec:app}

In this appendix we gather together some technical results about  monads.  
Our main purpose is to prove that the limits we need to calculate in
$\MNDd$ behave in the way one might expect.  
We begin by proving Proposition~\ref{twopointsix}, that the chain of adjunctions in Proposition~\ref{prop:FTM-adjns}
restricts to a chain of adjunctions
\[
\psset{unit=0.11cm,labelsep=2pt,nodesep=3pt}
\pspicture(0,22)


\rput(0,0){\rnode{a1}{$\CATd$}}  
\rput(0,20){\rnode{a2}{$\MNDd$}}  

\ncline[nodesepA=3pt,nodesepB=3pt]{->}{a1}{a2}\ncput*[labelsep=0pt]{{\scriptsize $\cat{Inc}$}} 

\nccurve[angleA=215,angleB=145,ncurv=0.9]{->}{a2}{a1}\nbput{{\scriptsize $\Und$}} \naput[labelsep=5.5pt]{{\scriptsize $\ladj$}}
\nccurve[angleA=-35,angleB=35,ncurv=0.9]{->}{a2}{a1}\naput{{\scriptsize $\Alg$}} \nbput[labelsep=5.5pt]{{\scriptsize $\ladj$}}

\endpspicture
\]

\begin{prfof}{Proposition~\ref{twopointsix}} 
We have to show that all three functors restrict to the subcategories
concerned, and that given an object in one of the subcategories, the
associated unit and counit morphisms also lie in the subcategory.  The only
nontrivial cases are that $\Alg$ restricts to a functor $\MNDd \lra \CATd$ and
that, for $(\cl{V}, T) \in \MNDd$, the counit morphism
\[
\cat{Inc}\Alg(\cl{V}, T) = (\cl{V}^T, \cat{id})
\ltra
(\cl{V}, T)
\]
lies in $\MNDd$.

To prove these statements, we use the fact that for any $(\cl{V}, T) \in
\MND$, the forgetful functor $G^T\: \cl{V}^T \lra \cl{V}$ creates limits, and
creates all colimits that $T$ preserves.  For $(\cl{V}, T) \in \MNDd$, then,
$G^T$ creates finite products and small coproducts.  It follows that
$\cl{V}^T$ has all finite products and small coproducts, and since $G^T$
reflects isomorphisms, $\cl{V}^T$ is infinitely distributive.  It also follows
that $G^T$ preserves and reflects finite products and small coproducts, from
which it can be shown that the image under $\Alg$ of a morphism in $\MNDd$ is
in $\CATd$.  Finally, the underlying functor of the counit morphism above is
$G^T$, which preserves finite products and small coproducts.
\end{prfof} 

We now build up some technical lemmas.

\begin{lemma}
\label{lemma:lim-2-lim}

Let $\scat{I}$ be a category and $R\: \scat{I} \lra \CAT$ a functor.  Then any
limit cone on the diagram $R$ is also a 2-limit cone.

\end{lemma}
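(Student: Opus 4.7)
The plan is to exploit the fact that natural transformations in $\CAT$ are encoded by functors out of the interval category $\mathbf{2}$, so that the 2-dimensional universal property reduces to the 1-dimensional one.

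Write $L$ for the apex of a 1-limit cone $(p_i \colon L \to R i)_{i \in \mathbb{I}}$ on $R$. To show that this cone is a 2-limit, we must verify that for every category $\cat{C}$ the evident comparison functor
\[
\Phi_{\cat{C}} \colon \CAT(\cat{C}, L) \lra \mathrm{Cone}(\cat{C}, R)
\]
(sending a functor $F$ to the cone $(p_i \circ F)_i$, and a natural transformation to its whiskered components) is an isomorphism of categories. The 1-limit property already tells us that $\Phi_{\cat{C}}$ is a bijection on objects, so it remains to check bijectivity on morphisms.

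The key observation is that, writing $\mathbf{2}$ for the free category on a single arrow, a natural transformation $F \Tra G$ between functors $\cat{C} \to \cat{D}$ is the same thing as a functor $\cat{C} \times \mathbf{2} \to \cat{D}$, and a morphism of cones from $\cat{C}$ over $R$ is the same thing as a cone from $\cat{C} \times \mathbf{2}$ over $R$. Thus we have a commutative square
\[
\psset{unit=0.1cm,labelsep=2pt,nodesep=3pt}
\pspicture(0,-4)(55,24)
\rput(0,20){\rnode{a1}{$\CAT(\cat{C}\times\mathbf{2},L)$}}
\rput(55,20){\rnode{a2}{$\mathrm{Cone}(\cat{C}\times\mathbf{2},R)$}}
\rput(0,0){\rnode{a3}{$\CAT(\cat{C},L)^{\mathbf{2}}$}}
\rput(55,0){\rnode{a4}{$\mathrm{Cone}(\cat{C},R)^{\mathbf{2}}$}}
\ncline{->}{a1}{a2}\naput{{\scriptsize $\Phi_{\cat{C}\times\mathbf{2}}$}}
\ncline{->}{a3}{a4}\nbput{{\scriptsize $\Phi_{\cat{C}}^{\mathbf{2}}$}}
\ncline{->}{a1}{a3}\nbput{{\scriptsize $\iso$}}
\ncline{->}{a2}{a4}\naput{{\scriptsize $\iso$}}
\endpspicture
\]
in which the vertical maps are the canonical bijections just described. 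The top horizontal arrow is a bijection by applying the 1-limit property to the category $\cat{C} \times \mathbf{2}$ in place of $\cat{C}$. Hence $\Phi_{\cat{C}}^{\mathbf{2}}$ is a bijection, which means precisely that $\Phi_{\cat{C}}$ is a bijection on morphisms, as required.

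I do not expect any real obstacle here; the only thing to be careful about is stating the 2-limit property in the strict form (isomorphism of hom-categories, not equivalence) and checking that the bijections involving $\mathbf{2}$ are natural enough to fit into the square above. Both of these are routine once one observes that in $\CAT$ the object $\cat{C} \times \mathbf{2}$ is both the tensor and the cotensor used to describe natural transformations.
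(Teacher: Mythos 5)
Your argument is correct and is precisely the ``elementary direct proof'' that the paper alludes to: the paper itself gives no argument, citing only Kelly's \emph{Elementary observations on 2-categorical limits} and asserting that a direct proof is straightforward. The reduction via the cotensor with $\mathbf{2}$ --- applying the 1-dimensional universal property to $\cat{C}\times\mathbf{2}$ to get the bijection on morphisms of cones, with the bijection on objects already supplied by the 1-limit property --- is exactly Kelly's mechanism, so there is nothing to add.
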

(We use ``2-limit'' in its traditional, strict, sense.)

\begin{proof}
See Kelly \cite{kel6}.  An elementary direct
proof is also straightforward.
\end{proof}

Given a class $\cl{K}$ of colimits and a class $\cl{L}$ of limits, denote by
$\CAT_{\cl{K}, \cl{L}}$ the subcategory of $\CAT$ consisting of the categories
with, and functors preserving, $\cl{K}$-colimits and $\cl{L}$-limits.  The
following lemma is easily verified and well known.

\begin{lemma}
\label{lemma:KL-refl}

The inclusion $\CAT_{\cl{K}, \cl{L}} \hra \CAT$ reflects limits. 
\proofbox

\end{lemma}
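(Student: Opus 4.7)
The plan is straightforward and proceeds in two stages. Suppose we are given a diagram $D\colon \scat{I} \lra \CAT$ landing in $\CAT_{\cl{K},\cl{L}}$ and a cone $(p_i\colon L \lra D(i))_{i \in \scat{I}}$ in $\CAT_{\cl{K},\cl{L}}$ whose underlying cone in $\CAT$ is a limit cone. I must show that $(p_i)$ is already a limit cone in $\CAT_{\cl{K},\cl{L}}$.

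First I would establish that the vertex $L$ itself lies in $\CAT_{\cl{K},\cl{L}}$ and that each $p_i$ preserves $\cl{K}$-colimits and $\cl{L}$-limits. Using the standard explicit description of limits in $\CAT$ (objects are compatible families, morphisms are families of morphisms), any $\cl{K}$-diagram $J\colon \scat{J} \lra L$ projects, via each $p_i$, to a $\cl{K}$-diagram $p_i J$ in $D(i)$. Each $p_i J$ has a colimit in $D(i)$ by hypothesis on the objects of $D$, and the transition functors of $D$ preserve such colimits by hypothesis on the morphisms of $D$. Consequently the colimiting objects and cocones assemble into a compatible family, yielding an object and cocone of $L$, which one verifies to be a colimit of $J$ in $L$. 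The same argument (dualised for $\cl{L}$-limits) shows that $L \in \CAT_{\cl{K},\cl{L}}$ and that each $p_i$ is in $\CAT_{\cl{K},\cl{L}}$; in particular, $\cl{K}$-colimits and $\cl{L}$-limits in $L$ are detected componentwise by the projections $p_i$.

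For the second stage, let $(q_i\colon C \lra D(i))$ be any other cone in $\CAT_{\cl{K},\cl{L}}$. The limit property in $\CAT$ yields a unique functor $f\colon C \lra L$ satisfying $p_i f = q_i$ for all $i$, and it remains only to verify that $f$ lies in $\CAT_{\cl{K},\cl{L}}$. Given a colimit cocone over a $\cl{K}$-diagram in $C$, apply $f$ to obtain a cocone in $L$; post-composing with any $p_i$ reproduces the image of the original cocone under $q_i$, which is a colimit because $q_i \in \CAT_{\cl{K},\cl{L}}$. By the componentwise detection established in the first stage, the cocone under $f$ is a colimit, so $f$ preserves $\cl{K}$-colimits; the dual argument gives preservation of $\cl{L}$-limits.

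There is no real obstacle here; the only slightly delicate point is the componentwise description of $L$'s limits and colimits, which is purely a matter of unwinding the standard construction of limits in $\CAT$ and using that the transitions in $D$ preserve the relevant colimits and limits.
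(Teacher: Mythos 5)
Your argument is sound in substance, and it takes a genuinely different route from the paper: the paper offers no direct proof, declaring the lemma ``easily verified and well known'' and then sketching an alternative justification via 2-monad theory, namely that $\CAT_{\cl{K},\cl{L}}$ is equivalent to the 2-category of weak algebras for a suitable 2-monad on $\CAT$ and that the forgetful 2-functor from weak algebras reflects 2-limits. Your elementary verification is presumably the ``easy verification'' the authors have in mind but do not write out; it buys self-containedness at the cost of componentwise bookkeeping, whereas the 2-monadic route buys uniformity (the same statement for any subcategory of structured objects and structure-preserving maps presented by a 2-monad) at the cost of importing the Blackwell--Kelly--Power machinery. The core of your proof --- that the induced comparison functor $f$ preserves $\cl{K}$-colimits because each $p_i f = q_i$ does, together with the fact that the projections of a limit in $\CAT$ jointly detect colimits --- is exactly right.

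One step in your first stage deserves a caveat. You assert that the chosen colimits $\mathrm{colim}(p_i J)$ ``assemble into a compatible family.'' In general they do not assemble \emph{strictly}: the transition functors of $D$ preserve $\cl{K}$-colimits only up to canonical isomorphism, so $D(u)\bigl(\mathrm{colim}\, p_i J\bigr)$ need not equal $\mathrm{colim}\, p_j J$ on the nose, while the limit in $\CAT$ is a strict one whose objects are strictly compatible families. (This is precisely why the lemma is about \emph{reflecting} rather than \emph{creating} limits.) Fortunately that step is dispensable: the reflection hypothesis already places the vertex $L$ and the legs $p_i$ in $\CAT_{\cl{K},\cl{L}}$, so you need not construct colimits in $L$ at all. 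The joint-detection statement you actually use can be proved without any construction: given a cocone $\lambda$ in $L$ over a $\cl{K}$-diagram $J$ such that every $p_i\lambda$ is a colimit cocone, and any competing cocone $\mu$, the unique factorisations $h_i$ in the categories $D(i)$ are automatically strictly compatible --- apply $D(u)$ to $h_i$ and invoke the uniqueness clause of the universal property of the colimit cocone $p_j\lambda$ --- and morphisms of $L$ are determined by their components, so they glue to the required unique factorisation in $L$. With that repair (or simply with that step deleted), your proof is complete.
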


The lemma can also be explained in terms of 2-monad theory.  For a 2-monad
$S$ on a 2-category $\cl{C}$, let $\Alg^\mathrm{wk}(S)$ be the category of
$S$-algebras and their weak (pseudo) morphisms.  It can be shown that the
forgetful functor $\Alg^\mathrm{wk}(S) \lra \cl{C}$ reflects 2-limits.
(Since the present explanation is not essential for our purpose, we omit
the proof.)  For a suitably-chosen 2-monad $S$ on $\CAT$ we have an
equivalence $\Alg^\mathrm{wk}(S) \catequiv \CAT_{\cl{K}, \cl{L}}$, under
which the forgetful functor corresponds to the inclusion into $\CAT$.  The
lemma follows.

Let $\CATc$ be the subcategory of $\CAT$ consisting of the categories with,
and functors preserving, small coproducts.  

\begin{proposition}
\label{prop:three-refl}\label{twopointeight}\label{prop:little-refl}

The inclusions
\[
\CATp \hra \CAT,
\qquad
\CATc \hra \CAT,
\qquad
\CATd \hra \CAT
\]
all reflect limits.

\end{proposition}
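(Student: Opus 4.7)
The first two cases are immediate applications of Lemma~\ref{lemma:KL-refl}: setting $(\cK, \cL) = (\emptyset, \{\text{finite products}\})$ identifies $\CATp$ with $\CAT_{\cK, \cL}$, and setting $(\cK, \cL) = (\{\text{small coproducts}\}, \emptyset)$ identifies $\CATc$ with $\CAT_{\cK, \cL}$. So only the $\CATd$ case has genuine content.

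For $\CATd$, the strategy is to reduce to the $\CATp$ and $\CATc$ cases, exploiting the fact that distributivity is a property of objects rather than extra structure borne by morphisms. Let $R\: \scat{I} \to \CATd$ be a diagram and $(\pi_i\: C \to R(i))_i$ a cone in $\CATd$ whose underlying cone in $\CAT$ is a limit. Given any other cone $(f_i\: D \to R(i))_i$ in $\CATd$, the limit property in $\CAT$ yields a unique factorisation $f\: D \to C$ in $\CAT$; the claim is that $f$ is a morphism in $\CATd$, i.e.\ preserves finite products and small coproducts. To see that $f$ preserves finite products, I would regard $R$ as a diagram in $\CATp$ (by forgetting the coproduct structure and the distributivity condition) and $(\pi_i), (f_i)$ as cones in $\CATp$. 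By the already-established $\CATp$ case, $(\pi_i)$ is a limit cone in $\CATp$, so the universal property produces a factorisation in $\CATp$; by uniqueness of factorisations in $\CAT$, this factorisation coincides with $f$, so $f \in \CATp$. Running the same argument with $\CATc$ in place of $\CATp$ shows $f$ preserves small coproducts, completing the claim.

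The main obstacle is really just one of bookkeeping: keeping track of which of the three subcategories one is working in at each step, and recognising that a functor between distributive categories preserving finite products and small coproducts is automatically a morphism in $\CATd$. Once this is laid out, nothing beyond Lemma~\ref{lemma:KL-refl} and uniqueness of factorisations through limits is required.
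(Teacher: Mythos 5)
Your proof is correct, and the first two cases are handled exactly as in the paper (both are instances of Lemma~\ref{lemma:KL-refl}). For the third case you take a slightly different route. The paper applies Lemma~\ref{lemma:KL-refl} once more, with $\cK$ the small coproducts and $\cL$ the finite products simultaneously, to conclude that $\CATpc \hra \CAT$ reflects limits, and then observes that $\CATd \hra \CATpc$ is full and faithful (distributivity being a property of objects, with no new condition on morphisms) and hence reflects limits; the composite then reflects limits. You instead run the universal-property argument by hand: two separate applications of the already-proved cases show that the mediating functor preserves finite products and small coproducts respectively, and the uniqueness of mediating morphisms in $\CAT$ identifies the three candidate factorisations. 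Both arguments rest on the same two ingredients---Lemma~\ref{lemma:KL-refl} plus the observation that distributivity imposes nothing extra on morphisms---so the difference is one of packaging: the paper's version is more economical and generalises immediately to any full subcategory of a $\CAT_{\cK,\cL}$ cut out by a property of objects, while yours makes explicit why the induced functor preserves each kind of (co)limit without needing to introduce the intermediate category $\CATpc$.
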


\begin{proof}
Lemma~\ref{lemma:KL-refl} proves the first two cases.  It also implies that
the inclusion $\CATpc \hra \CAT$ reflects limits, where $\CATpc$ is
the subcategory of $\CAT$ consisting of the categories with, and functors
preserving, finite products and small coproducts.  The inclusion $\CATd
\hra \CATpc$ also reflects limits (being full and faithful),
proving the third case.
\end{proof}

For a 2-category $\cl{C}$, let $\cat{Mnd}(\cl{C})$ be the category of monads
in $\cl{C}$ and their lax morphisms, and let $\cat{Mnd}^\mathrm{wk}(\cl{C})$
be the subcategory consisting of only the weak morphisms.

\begin{proposition}
\label{prop:gen-reflection}

Let $\cl{C}$ be a 2-category, $\scat{I}$ a category, and $(A_i, T_i)_{i \in
\scat{I}}$ a diagram in $\cat{Mnd}^\mathrm{wk}(\cl{C})$.  Let
\begin{equation}
\label{eq:gen-mnd-cone}
\left(
(A, T) \map{(P_i, \pi_i)} (A_i, T_i)
\right)_{i \in \scat{I}}
\end{equation}
be a cone in $\cat{Mnd}^\mathrm{wk}(\cl{C})$.  Suppose that
\begin{equation}
\label{eq:gen-und-cone}
\left(
A \map{P_i} A_i
\right)_{i \in \scat{I}}
\end{equation}
is a 2-limit cone in $\cl{C}$.  Then~(\ref{eq:gen-mnd-cone}) is a limit cone
in $\cat{Mnd}(\cl{C})$.
\end{proposition}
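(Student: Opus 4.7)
The plan is to show that any competing cone in $\cat{Mnd}(\cl{C})$ factors uniquely through the given cone, using the 2-dimensional universal property of $(A \map{P_i} A_i)$ twice: once for 1-cells and once for 2-cells. The invertibility of the $\pi_i$ (coming from the hypothesis that the given cone lives in $\cat{Mnd}^{\mathrm{wk}}(\cl{C})$) will be crucial, since it lets us rearrange the composition law for lax morphisms of monads to express the 2-cell part of the factorisation.

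Let $\bigl((B, S) \map{(Q_i, \sigma_i)} (A_i, T_i)\bigr)_{i \in \scat{I}}$ be a cone in $\cat{Mnd}(\cl{C})$. First, since the underlying 1-cells $Q_i\: B \lra A_i$ form a cone on the underlying diagram in $\cl{C}$, the 1-dimensional part of the 2-limit property of~(\ref{eq:gen-und-cone}) produces a unique $Q\: B \lra A$ with $P_i Q = Q_i$ for all $i$. Next, for the 2-cell part: a lax morphism $(Q, \sigma)\: (B,S) \lra (A,T)$ requires a 2-cell $\sigma\: TQ \Ra QS$, and demanding $(P_i, \pi_i) \circ (Q, \sigma) = (Q_i, \sigma_i)$ unpacks (via the composition formula for lax monad morphisms) to the equation $(P_i \sigma) \cdot (\pi_i Q) = \sigma_i$. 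Since $\pi_i$ is invertible, this determines $P_i \sigma$ as $\tau_i := \sigma_i \cdot (\pi_i Q)^{-1}$. A short calculation---using the cone condition $\sigma_j = (r_f \sigma_i)(\rho_f Q_i)$ for the test cone, the analogous cone condition $\pi_j = (r_f \pi_i)(\rho_f P_i)$ for the given cone, and the identity $P_i Q = Q_i$---verifies that the $\tau_i$ satisfy $r_f \tau_i = \tau_j$ for each $f\: i \lra j$, so they form a 2-cone. The 2-dimensional part of the universal property of~(\ref{eq:gen-und-cone}) then produces a unique $\sigma\: TQ \Ra QS$ with $P_i \sigma = \tau_i$ for all $i$.

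It remains to check that $(Q, \sigma)$ satisfies the unit and multiplication axioms of a lax morphism of monads. These are equations between parallel 2-cells $B \lra A$ built out of $\sigma$, the units and multiplications of $T$ and $S$, and identities. By the uniqueness clause of the 2-dimensional universal property (2-limits are conservative on 2-cells in the obvious sense: two parallel 2-cells into the limit agreeing after each $P_i$ are equal), it suffices to verify these axioms after whiskering with each $P_i$. But $P_i$ applied to the two sides of each axiom produces precisely the equations one obtains by whiskering the monad-morphism axioms for $(Q_i, \sigma_i)$ through $\pi_i$ and its inverse, which hold because $(Q_i, \sigma_i)$ is a lax morphism in $\cat{Mnd}(\cl{C})$ and $(P_i, \pi_i)$ is a weak morphism. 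Uniqueness of the factorisation is automatic: any competing $(Q', \sigma')$ satisfies $P_i Q' = Q_i$, forcing $Q' = Q$ by the 1-dimensional universal property, and then $P_i \sigma' = \tau_i$, forcing $\sigma' = \sigma$ by the 2-dimensional part.

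The main obstacle is the bookkeeping in the third paragraph: one must confirm that whiskering the two prospective axioms for $(Q, \sigma)$ by $P_i$ indeed yields identities that are implied by the axioms for $(Q_i, \sigma_i)$ together with the naturality of $\pi_i$ and its inverse with respect to $\eta^T, \mu^T$. This is routine diagram-chasing, but it is the only place where anything beyond abstract nonsense is required; everything else is a direct application of the hypothesis that~(\ref{eq:gen-und-cone}) is a 2-limit.
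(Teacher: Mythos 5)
Your proposal is correct and follows essentially the same route as the paper's proof: obtain $Q$ from the 1-dimensional universal property, use invertibility of the $\pi_i$ to rewrite the compatibility condition as a 2-cone condition determining $P_i\sigma$, invoke the 2-dimensional part of the 2-limit property to produce $\sigma$, and verify the monad-morphism axioms and uniqueness by testing against each $P_i$. The paper likewise leaves the 2-cone check and the axiom verification as routine, so there is nothing to add.
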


This result, whose proof follows shortly, has a partial explanation in
terms of 2-monad theory.  For many 2-categories $\cl{C}$, including $\CAT$,
there is a 2-monad $S$ on $\cl{C}$ such that (with the obvious notation)
\[
\Alg^\mathrm{wk}(S) \iso \cat{Mnd}^\mathrm{wk}(\cl{C}),
\qquad
\Alg^\mathrm{lax}(S) \iso \cat{Mnd}(\cl{C})
\]
and $S$ has rank.  The forgetful functor $\Alg^\mathrm{wk}(S) \lra \cl{C}$
reflects 2-limits, so the proposition will follow as long as the inclusion
$\Alg^\mathrm{wk}(S) \hra \Alg^\mathrm{lax}(S)$ preserves limits.
We know from~\cite{bkp1} that this inclusion has a left biadjoint, and so
preserves \emph{bi}limits, and that the inclusion $\Alg^\mathrm{str}(S)
\hra \Alg^\mathrm{lax}(S)$ of \emph{strict} algebras has a left
adjoint, and so preserves limits.  These facts are not quite what we need, but
at least make the result unsurprising.  In any case, the following proof is
elementary.

\begin{proof}
Let 
\[
\left(
(B, S) \map{(Q_i, \kappa_i)} (A_i, T_i)
\right)_{i \in \scat{I}}
\]
be a cone in $\cat{Mnd}(\cl{C})$.  Since~(\ref{eq:gen-und-cone}) is a 2-limit,
and in particular a limit, there is a unique morphism $\overline{Q}\: B \lra A$
such that 
\[
Q_i = \left( B \map{\overline{Q}} A \map{P_i} A_i \right)
\]
for all $i \in \scat{I}$.

We claim that there exists a unique 2-cell $\overline{\kappa}$ such that

\[\psset{unit=0.08cm,labelsep=0pt,nodesep=3pt}
\pspicture(0,-4)(70,22)

\rput(0,10){
\pspicture(40,22)


\rput(0,20){\rnode{a1}{$B$}} 
\rput(20,20){\rnode{a2}{$A$}} 
\rput(40,20){\rnode{a3}{$A_i$}} 

\rput(0,0){\rnode{b1}{$B$}}   
\rput(20,0){\rnode{b2}{$A$}}  
\rput(40,0){\rnode{b3}{$A_i$}}  

\psset{nodesep=3pt,labelsep=2pt,arrows=->}
\ncline{a1}{a2}\naput{{\scriptsize $\bar{Q}$}} 
\ncline{b1}{b2}\nbput{{\scriptsize $\bar{Q}$}} 
\ncline{a2}{a3}\naput{{\scriptsize $P_i$}} 
\ncline{b2}{b3}\nbput{{\scriptsize $P_i$}} 

\ncline{a1}{b1}\nbput{{\scriptsize $S$}} 
\ncline{a2}{b2}\naput{{\scriptsize $T$}} 
\ncline{a3}{b3}\naput{{\scriptsize $T_i$}} 

\pnode(13,13){c1}
\pnode(7,7){c2}
\ncline[doubleline=true,arrowinset=0.6,arrowlength=0.8,arrowsize=0.5pt 2.1]{c1}{c2} \naput[npos=0.4,labelsep=0.4pt]{{\scriptsize $\bar{\kappa}$}}

\pnode(33,13){a3}
\pnode(27,7){b3}
\ncline[doubleline=true,arrowinset=0.6,arrowlength=0.8,arrowsize=0.5pt 2.1]{a3}{b3} \naput[npos=0.4,labelsep=0.7pt]{{\scriptsize $\pi_i$}}

\endpspicture}

\rput(36,8){$=$}

\rput(60,10){
\pspicture(20,22)


\rput(0,20){\rnode{a1}{$B$}} 
\rput(20,20){\rnode{a2}{$A_i$}} 

\rput(0,0){\rnode{b1}{$B$}}   
\rput(20,0){\rnode{b2}{$A_i$}}  

\psset{nodesep=3pt,labelsep=2pt,arrows=->}
\ncline{a1}{a2}\naput{{\scriptsize $Q_i$}} 
\ncline{b1}{b2}\nbput{{\scriptsize $Q_i$}} 
\ncline{a1}{b1}\nbput{{\scriptsize $S$}} 
\ncline{a2}{b2}\naput{{\scriptsize $T_i$}} 

\psset{labelsep=0.5pt}
\pnode(13,13){a3}
\pnode(7,7){b3}
\ncline[doubleline=true,arrowinset=0.6,arrowlength=0.8,arrowsize=0.5pt 2.1]{a3}{b3} \naput[npos=0.4]{{\scriptsize $\kappa_i$}}

\endpspicture}

\endpspicture\]
for all $i \in \scat{I}$.  Indeed, this equation is equivalent to

\[\psset{unit=0.08cm,labelsep=0pt,nodesep=3pt}
\pspicture(0,-15)(80,22)

\rput(0,10){
\pspicture(40,22)


\rput(0,20){\rnode{a1}{$B$}} 
\rput(20,20){\rnode{a2}{$A$}} 

\rput(0,0){\rnode{b1}{$B$}}   
\rput(20,0){\rnode{b2}{$A$}}  
\rput(40,0){\rnode{b3}{$A_i$}}  

\psset{nodesep=2pt,labelsep=2pt,arrows=->}
\ncline{a1}{a2}\naput{{\scriptsize $\bar{Q}$}} 
\ncline{b1}{b2}\nbput{{\scriptsize $\bar{Q}$}} 
\ncline{b2}{b3}\nbput{{\scriptsize $P_i$}} 

\ncline{a1}{b1}\nbput{{\scriptsize $S$}} 
\ncline{a2}{b2}\naput{{\scriptsize $T$}} 

\pnode(11.5,11.5){c1}
\pnode(8.5,8.5){c2}
\ncline[nodesep=0pt,doubleline=true,arrowinset=0.6,arrowlength=0.8,arrowsize=0.5pt 2.1]{c1}{c2} \naput[npos=0.4,labelsep=0.4pt]{{\scriptsize $\bar{\kappa}$}}

\endpspicture}

\rput(36,8){$=$}

\rput(70,10){
\pspicture(40,22)


\rput(0,20){\rnode{a1}{$B$}} 
\rput(20,20){\rnode{a2}{$A$}} 
\rput(40,20){\rnode{a3}{$A$}} 

\rput(0,0){\rnode{b1}{$B$}}   
\rput(20,7){\rnode{b2}{$A_i$}}  
\rput(40,-5){\rnode{b3}{$A_i$}}  

\rput(15,-10){\rnode{c}{$A$}}  

\psset{nodesep=2pt,labelsep=2pt,arrows=->}
\ncline{a1}{a2}\naput{{\scriptsize $\bar{Q}$}} 
\ncline{b1}{b3}\naput[labelsep=1pt,npos=0.5]{{\scriptsize $Q_i$}} 
\ncline{a2}{a3}\naput{{\scriptsize $T$}} 
\ncline{b2}{b3}\nbput[npos=0.4,labelsep=0pt]{{\scriptsize $T_i$}} 

\ncline{a1}{b1}\nbput{{\scriptsize $S$}} 
\ncline{a2}{b2}\naput{{\scriptsize $P_i$}} 
\ncline{a3}{b3}\naput{{\scriptsize $P_i$}} 

\ncline[nodesepA=2pt,nodesepB=0pt]{a1}{b2}\nbput[labelsep=0pt]{{\scriptsize $Q_i$}} 
\ncline[nodesepA=2pt,nodesepB=0pt]{b1}{c}\nbput[labelsep=0pt,npos=0.4]{{\scriptsize $\bar{Q}$}} 
\ncline{c}{b3}\nbput{{\scriptsize $P_i$}} 

{\psset{doubleline=true,arrowinset=0.6,arrowlength=0.5,arrowsize=0.5pt 2.1,nodesep=0pt}
\rput[c](8,4){\pcline{->}(3,3)(0,0) \naput[labelsep=0pt]{{\scriptsize $\kappa_i$}}}}

{\psset{doubleline=true,arrowinset=0.6,arrowlength=0.5,arrowsize=0.5pt 2.1,nodesep=0pt}
\rput[c](28,9){\pcline{->}(3,3)(0,0) \naput[labelsep=-2pt,npos=0.3]{{\scriptsize $\pi_i^{-1}$}}}}

\rput(13,16){{\scriptsize $=$}}
\rput(16,-5){{\scriptsize $=$}}

\endpspicture}

\endpspicture\]
for all $i \in \scat{I}$ (using the fact that $(P_i, \pi_i)$ is a \emph{weak}
morphism of monads).  By the 2-limit property of~(\ref{eq:gen-und-cone}), the
claim holds as long as the 2-cells
\[
\psset{unit=0.1cm,labelsep=2pt,nodesepA=2pt,nodesepB=1pt}
\left(
\pspicture(-2,9)(22,20)

\rput(0,10){\rnode{a1}{$B$}}  
\rput(20,10){\rnode{a2}{$A_i$}}  

\pcline[linewidth=0.6pt,doubleline=true,arrowinset=0.6,arrowlength=0.8,arrowsize=0.5pt 2.1]{->}(10,13)(10,7)  \naput{{\scriptsize $$}}

\ncarc[arcangle=45]{->}{a1}{a2}\naput{{\scriptsize $P_iT\bar{Q}$}}
\ncarc[arcangle=-45]{->}{a1}{a2}\nbput{{\scriptsize $P_i\bar{Q}S$}}

\endpspicture\right)_{i \in \bI}
\]
on the right-hand side form a morphism of cones.  This is a straightforward
check, proving the claim.

It is also straightforward to check that the pair $(\overline{Q},
\overline{\kappa})$ satisfies the axioms for a morphism of monads.  The result
follows. 
\end{proof}

We now deduce a result that we have used repeatedly for calculating limits
in $\MNDd$ and related slice categories.  To a first approximation it says
that the forgetful functor $\MND \lra \CAT$ reflects limits.  One might
expect this by some kind of 2-monadicity, an object of $\MND$ being an
object of $\CAT$ equipped with extra structure.  But since the morphisms in
$\MND$ are \emph{lax}, and since we are interested in $\MNDd$ as well as
$\MND$, the statement is more complicated.

\begin{proposition}\label{prop:reflection}\label{twopointseven}

Let $\scat{I}$ be a category, let $(\cl{A}_i, T_i)_{i \in \scat{I}}$ be a
diagram in $\MNDdwk$, and let
\begin{equation}
\label{eq:refl-mnd-cone}
\left(
(\cl{A}, T) \map{(P_i, \pi_i)} (\cl{A}_i, T_i)
\right)_{i \in \scat{I}}
\end{equation}
be a cone in $\MNDwk$.  Suppose that the cone
\begin{equation}
\label{eq:refl-und-cone}
\left(
\cl{A} \map{P_i} \cl{A}_i
\right)_{i \in \scat{I}}
\end{equation}
lies in $\CATd$ and is a limit cone in $\CAT$.  Then:
\begin{enumerate}
\item 
\label{part:reflection-dist}
$(\cl{A}, T) \in \MNDd$

\item
\label{part:reflection-limit}
the cone~(\ref{eq:refl-mnd-cone}) is a limit cone in both $\MND$ and $\MNDd$.

\end{enumerate}

\end{proposition}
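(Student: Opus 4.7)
The plan is to reduce part (ii) to Proposition~\ref{prop:gen-reflection} applied with $\cl{C} = \CAT$, and to prove part (i) separately using the reflection-of-limits property of $\CATd \hookrightarrow \CAT$ together with the weakness of the $\pi_i$.

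For part~(\ref{part:reflection-dist}), I first show $\cl{A} \in \CATd$. The underlying cone (\ref{eq:refl-und-cone}) lies in $\CATd$ by hypothesis and is a limit in $\CAT$; by Proposition~\ref{prop:three-refl}, the inclusion $\CATd \hookrightarrow \CAT$ reflects limits, so it is also a limit in $\CATd$, which forces $\cl{A}$ to be infinitely distributive and every $P_i$ to preserve finite products and small coproducts. Next I show $T$ preserves small coproducts. Since each $(P_i, \pi_i)$ is weak, $\pi_i$ is an isomorphism $P_i T \cong T_i P_i$. Each $P_i$ preserves coproducts (just shown) and each $T_i$ does too (since $(\cl{A}_i, T_i) \in \MNDd$), so $P_i T$ preserves coproducts for every $i$. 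Because~(\ref{eq:refl-und-cone}) is a limit in $\CAT$, morphisms in $\cl{A}$ are compatible families of morphisms in the $\cl{A}_i$, so the family $(P_i)$ is jointly conservative; hence the canonical comparison morphism $\coprod T X_j \lra T(\coprod X_j)$ in $\cl{A}$ is an isomorphism as soon as each of its images under $P_i$ is, which is the case here.

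For part~(\ref{part:reflection-limit}), the limit claim in $\MND$ follows immediately from Proposition~\ref{prop:gen-reflection} with $\cl{C} = \CAT$: the underlying cone~(\ref{eq:refl-und-cone}) is a limit in $\CAT$, hence a 2-limit by Lemma~\ref{lemma:lim-2-lim}, and both the cone and the diagram lie in $\MNDwk$, so the proposition delivers that~(\ref{eq:refl-mnd-cone}) is a limit in $\MND$. For the limit in $\MNDd$, I take an arbitrary cone in $\MNDd$ over the same diagram and use the just-proved $\MND$-limit property to obtain a unique factorization through $(\cl{A}, T)$; the underlying functor of this factorization is the $\CAT$-factorization into $\cl{A}$, and by the $\CATd$-reflection argument used for $\cl{A}$ itself, this underlying functor already preserves finite products and small coproducts. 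Hence the factorization actually lives in $\MNDd$, giving the limit property there.

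The main obstacle will be the joint conservativity step used to conclude that $T$ preserves coproducts: it is the one place where the weakness hypothesis (isomorphism of each $\pi_i$) and the explicit componentwise description of morphisms in a $\CAT$-limit genuinely combine, and the lifts of the argument to $\MNDdwk$ in the later applications depend on this step going through. Everything else is a clean invocation of Proposition~\ref{prop:gen-reflection} and Proposition~\ref{prop:three-refl}.
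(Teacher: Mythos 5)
Your proposal is correct and follows essentially the same route as the paper: part (i) comes from the weakness of the $\pi_i$ together with reflection of limits along $\CATd \hookrightarrow \CAT$ (the paper phrases the coproduct-preservation of $T$ as the unique morphism of cones in $\CATc$ rather than via joint conservativity of the $P_i$ applied to the comparison morphisms, but the two arguments are interchangeable), and part (ii) by first invoking Lemma~\ref{lemma:lim-2-lim} and Proposition~\ref{prop:gen-reflection} to get the limit in $\MND$ and then upgrading to $\MNDd$ by reflecting the underlying functor of the induced factorisation into $\CATd$. I see no gaps.
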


\begin{proof}
For~(\ref{part:reflection-dist}), we have to show that $T\: \cl{A} \lra \cl{A}$
preserves small coproducts.  Composing $T$ with the
cone~(\ref{eq:refl-und-cone}) gives a cone
\begin{equation}
\label{eq:composite-cone}
\left(
\cl{A} \map{P_i T} \cl{A}_i
\right)_{i \in \scat{I}}
\end{equation}
in $\CAT$.  In fact this cone lies in $\CATc$, since for each $i \in \scat{I}$
we have an isomorphism
\[\psset{unit=0.1cm,labelsep=0pt,nodesep=3pt}
\pspicture(20,22)


\rput(0,20){\rnode{a1}{$\cA$}} 
\rput(20,20){\rnode{a2}{$\cA_i$}} 

\rput(0,0){\rnode{b1}{$\cA$}}   
\rput(20,0){\rnode{b2}{$\cA_i$}}  

\psset{nodesep=3pt,labelsep=2pt,arrows=->}
\ncline{a1}{a2}\naput{{\scriptsize $P_i$}} 
\ncline{b1}{b2}\nbput{{\scriptsize $P_i$}} 
\ncline{a1}{b1}\nbput{{\scriptsize $T$}} 
\ncline{a2}{b2}\naput{{\scriptsize $T_i$}} 

\psset{labelsep=1.5pt}
\pnode(13,13){a3}
\pnode(7,7){b3}
\ncline[doubleline=true,arrowinset=0.6,arrowlength=0.8,arrowsize=0.5pt 2.1]{a3}{b3} \naput[npos=0.4]{{\scriptsize $\pi_i$}} \nbput[npos=0.4]{{\scriptsize $\iso$}}

\endpspicture\]
and the functors $T_i$ and $P_i$ are, by hypothesis, both
coproduct-preserving.  

The cone (\ref{eq:refl-und-cone}) is a limit in $\CATc$ as well as
$\CAT$, by Proposition~\ref{prop:three-refl}.  Hence $T$, as the unique
morphism of cones from~(\ref{eq:composite-cone}) to~(\ref{eq:refl-und-cone})
in $\CAT$, preserves coproducts.

For~(\ref{part:reflection-limit}), we know from Lemma~\ref
{lemma:lim-2-lim} and
Proposition~\ref{prop:gen-reflection} that the cone~(\ref{eq:refl-mnd-cone})
is a limit in $\MND$.  We have to show that it is a limit in $\MNDd$.  In
other words, we have to show that given an object $(\cl{B}, S) \in \MNDd$ and
a morphism 
\[
(\overline{Q}, \overline{\kappa})\: (\cl{B}, S) \lra (\cl{A}, T)
\]
in $\MND$ such that each composite
\[
(\cl{B}, S) \map{(\overline{Q}, \overline{\kappa})}
(\cl{A}, T) \map{(P_i, \pi_i)}
(\cl{A}_i, T_i)
\]
lies in $\MNDd$, the original morphism $(\overline{Q}, \overline{\kappa})$
also lies in $\MNDd$.  But a morphism in $\MND$ lies in $\MNDd$ if and only if
its underlying functor lies in $\CATd$, so this follows from the fact that the
inclusion $\CATd \hra \CAT$ reflects limits
(Proposition~\ref{prop:three-refl}). 
\end{proof}


\begin{references*}


\bibitem{ada1}
Ji\v{r}\'\i\ Ad\'amek.
\newblock Free algebras and automata realization in the language of categories.
\newblock {\em Math.\ Univ.\ Carolinae}, 15:589--602, 1974.

\bibitem{ada2}
Ji\v{r}\'\i\ Ad\'amek.
\newblock Introduction to coalgebra.
\newblock {\em Theory and Applications of Categories}, 14:157--199, 2005.

\bibitem{bat1}
M.~A. Batanin.
\newblock Monoidal globular categories as a natural environment for the theory
  of weak $n$-categories.
\newblock {\em Adv. Math.}, 136(1):39--103, 1998.

\bibitem{batweb1}
Michael Batanin and Mark Weber.
\newblock Algebras of higher operads as enriched categories.
\newblock {\em Applied Categorical Structures}, 19(1):93--135, 2011.

\bibitem{bur1}
A.~Burroni.
\newblock ${T}$-cat{\'e}gories (cat{\'e}gories dans un triple).
\newblock {\em Cahiers de Topologie et G{\'e}om{\'e}trie Diff{\'e}rentielle
  Cat{\'e}goriques}, 12:215--321, 1971.

\bibitem{che19}
Eugenia Cheng.
\newblock The universal loop space operad and generalisations, 2009.
\newblock Talk at the British Topology Meeting and CT2010, submitted.

\bibitem{che16}
Eugenia Cheng.
\newblock Comparing operadic theories of $n$-category.
\newblock {\em Homology, Homotopy and Applications}, 13(2):217--249, 2011.
\newblock Also E-print {\tt arXiv:0809.2070}.

\bibitem{cg2}
Eugenia Cheng and Nick Gurski.
\newblock Towards an $n$-category of cobordisms.
\newblock {\em Theory and Applications of Categories}, 18:274--302, 2007.

\bibitem{cl1}
Eugenia Cheng and Aaron Lauda.
\newblock Higher dimensional categories: an illustrated guide book, 2004.
\newblock Available via \href{http://eugeniacheng.com/guidebook}{eugeniacheng.com/guidebook}

\bibitem{kel4}
G.~M. Kelly.
\newblock A unified treatment of transfinite constructions for free algebras,
  free monoids, colimits, associated sheaves and so on.
\newblock {\em Bull. Austral. Math. Soc.}, 22(1):1--83, 1980.

\bibitem{kel6}
G.~M. Kelly.
\newblock Elementary observations on 2-categorical limits.
\newblock {\em Bulletin of the Australian Mathematical Society}, 39:301--317,
  1989.

\bibitem{lac3}
Stephen Lack.
\newblock A 2-categories companion.
\newblock In John Baez and Peter May, editors, {\em $n$-Categories: Foundations
  and Applications}, volume 149, pages 105--191. IMA, 2009.
\newblock E-print {\tt arXiv:math.CT/0702535}.

\bibitem{lam1}
Joachim Lambek.
\newblock A fixpoint theorem for complete categories.
\newblock {\em Mathematische Zeitschrift}, 103:151--161, 1968.

\bibitem{lei7}
Tom Leinster.
\newblock A survey of definitions of $n$-category.
\newblock {\em Theory and Applications of Categories}, 10:1--70, 2002.

\bibitem{lei8}
Tom Leinster.
\newblock {\em Higher Operads, Higher Categories}.
\newblock Number 298 in London Mathematical Society Lecture Note Series.
  Cambridge University Press, 2004.

\bibitem{mac1}
Saunders {Mac Lane}.
\newblock {\em Categories for the Working Mathematician}, volume~5 of {\em
  Graduate Texts in Mathematics}.
\newblock Springer-Verlag, New York, second edition, 1998.

\bibitem{may2}
Peter May.
\newblock {\em The Geometry of Iterated Loop Spaces}, volume 271 of {\em
  Lecture Notes in Mathematics}.
\newblock Springer, 1972.

\bibitem{may1}
Peter May.
\newblock Operadic categories, ${A}_\infty$-categories and $n$-categories,
  2001.
\newblock notes of a talk given at Morelia, Mexico.

\bibitem{bkp1}
G.~M. Kelly A. J.~Power {R. Blackwell}.
\newblock Two-dimensional monad theory.
\newblock {\em Journal of Pure and Applied Algebra}, 59(1):1--41, 1989.

\bibitem{str1}
Ross Street.
\newblock The formal theory of monads.
\newblock {\em Journal of Pure and Applied Algebra}, 2:149--168, 1972.

\bibitem{tri1}
Todd Trimble.
\newblock What are `fundamental $n$-groupoids'?, 1999.
\newblock Seminar at DPMMS, Cambridge, 24 August 1999.

\bibitem{web2}
Mark Weber.
\newblock Multitensors as monads on categories of enriched graphs, 2011.
\newblock {\em Theory and Applications of Categories}, 28:857--932, 2013.
\newblock E-print {\tt arXiv:1106.1977}.

\end{references*}

\ed